\documentclass[12pt]{amsart}
\usepackage{etex}
\usepackage{etoolbox}
\patchcmd{\thebibliography}{*}{}{}{}
\usepackage{amssymb}
\usepackage{cite}
\usepackage{booktabs}
\usepackage{url}
\usepackage{hyphenat}
\usepackage{mathtools}
\usepackage{pifont}
\usepackage[all,cmtip]{xy}
\usepackage{ifpdf}
\usepackage{enumitem}
\usepackage{xcolor}
\usepackage{graphicx}
\usepackage[lmargin=1in,rmargin=1in,tmargin=1in,bmargin=1in]{geometry}

\usepackage{mathrsfs}
\usepackage{xr-hyper}

\definecolor{darkblue}{rgb}{0,0,0.4} 
\usepackage[colorlinks=true, citecolor=darkblue, filecolor=darkblue, linkcolor=darkblue,urlcolor=darkblue]{hyperref} 
\usepackage[all]{hypcap}

\allowdisplaybreaks

\usepackage{tikz}
\usetikzlibrary{matrix,arrows,3d,positioning,calc}
\tikzset{zxplane/.style={canvas is zx plane at y=#1,very thin}}
\tikzset{xyplane/.style={canvas is xy plane at z=#1,very thin}}
\tikzset{yzplane/.style={canvas is yz plane at x=#1,very thin}}
\tikzstyle{intext}=[rectangle,fill=white,inner sep=1pt,outer sep=2pt, fill opacity=0.7,text opacity=1]
\tikzset{doubar/.style={double, double equal sign distance, -implies}}

 \makeatletter
 \providecommand\@dotsep{5}
 \def\listtodoname{List of Todos}
 \def\listoftodos{\@starttoc{tdo}\listtodoname}
 \makeatother

\graphicspath{{draws/}{}}



\newcommand{\RR}{\mathbb R}
\newcommand{\CC}{\mathbb C}
\newcommand{\ZZ}{\mathbb Z}
\newcommand{\QQ}{\mathbb Q}

\newcommand{\FF}{\mathbb F}

\newcommand{\co}{\nobreak\mskip2mu\mathpunct{}\nonscript
  \mkern-\thinmuskip{:}\penalty300\mskip6muplus1mu\relax}


\newcommand{\bdy}{\partial}
\newcommand{\into}{\hookrightarrow}

\newcommand{\lbracket}{[}
\newcommand{\rbracket}{]}

\newcommand{\spinc}{\mathfrak s}
\newcommand{\spinct}{\mathfrak t}

\DeclareMathOperator{\Sym}{Sym}

\DeclareMathOperator{\Iso}{Iso}
\DeclareMathOperator{\Map}{Map}

\DeclareMathOperator{\spin}{spin}

\newcommand{\SpinC}{\spin^c}



\renewcommand{\emptyset}{\varnothing}



\theoremstyle{plain}

\numberwithin{equation}{section}
\newtheorem{theorem}[equation]{Theorem}

\newtheorem{proposition}[equation]{Proposition}
\newtheorem{lemma}[equation]{Lemma}
\newtheorem{corollary}[equation]{Corollary}

\newtheorem{definition}[equation]{Definition}

\theoremstyle{definition}

\theoremstyle{remark}

\newtheorem{remark}[equation]{Remark}

\hyphenation{Thurs-ton}
\hyphenation{mo-no-poles}
\hyphenation{sur-ger-y}


\newcommand{\HF}{\mathit{HF}}

\newcommand{\HFa}{\widehat{\mathit{HF}}}

\newcommand{\SFH}{\mathit{SFH}}

\newcommand{\CF}{{\mathit{CF}}}

\newcommand{\HFKa}{\widehat{\mathit{HFK}}}
\newcommand{\HFLa}{\widehat{\mathit{HFL}}}

\newcommand{\HFred}{\mathit{HF}_{\!\!\mathrm{red}}}
\newcommand{\x}{\mathbf x}
\newcommand{\y}{\mathbf y}


\newcommand\HH{\mathit{HH}}

\newcommand\Hochschild\HH


\newcommand{\alphas}{{\boldsymbol{\alpha}}}
\newcommand{\betas}{{\boldsymbol{\beta}}}






\newcommand\Id{\mathbb{I}}

\newcommand{\Field}{\FF_2}

\DeclareMathOperator{\nbd}{nbd}

\newcommand{\dbar}{\bar{\partial}}
\newcommand{\Heegaard}{\mathcal{H}}
\newcommand{\HD}{\Heegaard}

\newcommand{\wt}[1]{\widetilde{#1}{}}




\makeatletter
\newcommand\honestalg[3]{\bigl\lbracket
\begin{smallmatrix} #1\@ifempty{#3}{}{&#3} \\ #2 \end{smallmatrix}
\bigr\rbracket}

\makeatother
\newcommand{\lab}[1]{$\scriptstyle #1$}














\newcommand{\pt}{\mathit{pt}}



\newcommand{\fix}{\mathit{fix}}

\newcommand{\onto}{\twoheadrightarrow}
\newcommand{\ul}{\underline}

\makeatletter

\newcommand*\wthelper[2]{%
        \hbox{\dimen@\accentfontxheight#1%
                \accentfontxheight#11.3\dimen@
                $\m@th#1\widetilde{#2}$%
                \accentfontxheight#1\dimen@
        }%
}
\newcommand*\accentfontxheight[1]{%
        \fontdimen5\ifx#1\displaystyle
                \textfont
        \else\ifx#1\textstyle
                \textfont
        \else\ifx#1\scriptstyle
                \scriptfont
        \else
                \scriptscriptfont
        \fi\fi\fi3
}
\makeatother

\makeatletter
\newlength\xvec@height%
\newlength\xvec@depth%
\newlength\xvec@width%
\newcommand{\xvec}[2][]{%
  \ifmmode%
    \settoheight{\xvec@height}{$#2$}%
    \settodepth{\xvec@depth}{$#2$}%
    \settowidth{\xvec@width}{$#2$}%
  \else%
    \settoheight{\xvec@height}{#2}%
    \settodepth{\xvec@depth}{#2}%
    \settowidth{\xvec@width}{#2}%
  \fi%
  \def\xvec@arg{#1}%
  \def\xvec@dd{:}%
  \def\xvec@d{.}%
  \raisebox{.2ex}{\raisebox{\xvec@height}{\rlap{%
    \kern.05em
    \begin{tikzpicture}[scale=1]
    \pgfsetroundcap
    \draw (.05em,0)--(\xvec@width-.05em,0);
    \draw (\xvec@width-.05em,0)--(\xvec@width-.15em, .075em);
    \draw (\xvec@width-.05em,0)--(\xvec@width-.15em,-.075em);
    \ifx\xvec@arg\xvec@d%
      \fill(\xvec@width*.45,.5ex) circle (.5pt);%
    \else\ifx\xvec@arg\xvec@dd%
      \fill(\xvec@width*.30,.5ex) circle (.5pt);%
      \fill(\xvec@width*.65,.5ex) circle (.5pt);%
    \fi\fi%
    \end{tikzpicture}%
  }}}%
  #2%
}
\makeatother


\DeclareMathOperator{\PD}{PD}

\newcommand{\TT}{\mathbb{T}}

\newcommand{\lra}{\longrightarrow}

\DeclareMathOperator{\chern}{ch}
\DeclareMathOperator{\Cyl}{Cyl}
\DeclareMathOperator{\Span}{Span}

\newcommand{\twHF}{\HF_{\!\mathit{tw}}}


\setlength{\marginparwidth}{.8in}

\begin{document}
\title{Rank inequalities for the Heegaard Floer homology of branched covers}

\author{Kristen Hendricks}
 \address{Mathematics Department, Rutgers University\\
   New Brunswick, NJ 08854}
   \thanks{\texttt{KH was supported by NSF grant DMS-2019396 and a Sloan Research Fellowship.}}
\email{\href{mailto:kristen.hendricks@rutgers.edu}{kristen.hendricks@rutgers.edu}}

\author{Tye Lidman}
 \address{Department of Mathematics, North Carolina State University\\Raleigh, NC 27695}
   \thanks{\texttt{TL was supported by NSF grant DMS-1709702 and a Sloan Research Fellowship.}}
\email{\href{mailto:tlid@math.ncsu.edu}{tlid@math.ncsu.edu}}

\author{Robert Lipshitz}
 \address{Department of Mathematics, University of Oregon\\
   Eugene, OR 97403}
\thanks{\texttt{RL was supported by NSF grant DMS-1810893.}}
\email{\href{mailto:lipshitz@uoregon.edu}{lipshitz@uoregon.edu}}

\date{\today}

\begin{abstract}
  We show that if $L$ is a nullhomologous link in a 3-manifold $Y$ and
  $\Sigma(Y,L)$ is a double cover of $Y$ branched along $L$ then
  for each $\SpinC$-structure $\spinc$ on $Y$ there is an inequality
  \[
    \dim \HFa(\Sigma(Y,L),\pi^*\spinc;\FF_2)\geq \dim \HFa(Y,\spinc;\FF_2).
  \]
  We discuss the relationship with the $L$-space conjecture and give
  some other topological applications, as well as an analogous result for sutured Floer homology.
\end{abstract}

%


\maketitle

\tableofcontents

\renewcommand{\x}{x}
\renewcommand{\y}{y}

\section{Introduction}\label{sec:intro}
Heegaard Floer homology is a collection of invariants of
low-dimensional objects: $3$-manifolds, $4$-manifolds, knots, and so
on. Its most basic component is $\HFa$, which associates an
$\Field$-vector space $\HFa(Y,\spinc)$ to a closed, connected, oriented
$3$-manifold $Y$ together with a $\SpinC$-structure
$\spinc\in\SpinC(Y)$~\cite{OS04:HolomorphicDisks}. Our main theorem
concerns the behavior of $\HFa(Y)$ under taking branched covers:
\begin{theorem}\label{thm:main}
  Let $Y$ be a closed $3$-manifold, $L\subset Y$ an oriented
  nullhomologous link of $\ell>0$ components 
  with Seifert surface $F$, and $\spinc$ a 
  $\SpinC$-structure
  on $Y$. Let $\pi\co \Sigma(Y,L)\to Y$ be the double cover branched
  along $L$ induced by the Seifert surface $F$. Let $\pi^*\spinc$ denote
  the pullback of $\spinc$ to $\Sigma(Y,L)$
  (Definition~\ref{def:pullback-spinc}). Then, there is a spectral
  sequence with $E^1$-page given by
  \[
    \HFa(\Sigma(Y,L),\pi^*\spinc) \otimes H_*(T^{\ell - 1}) \otimes \FF_2[[\theta, \theta^{-1}]
  \]
  converging to
  \[
   \bigoplus_{\{\spinc'\mid \pi^*\spinc' = \pi^*\spinc\}} \HFa(Y,\spinc') \otimes H_*(T^{\ell - 1}) \otimes \FF_2[[\theta, \theta^{-1}].
  \]
  In particular, 
  \[
    \dim\HFa(\Sigma(Y,L),\pi^*\spinc) \geq \sum_{\{\spinc'\mid \pi^*\spinc' = \pi^*\spinc\}} \dim\HFa(Y,\spinc').
  \]
\end{theorem}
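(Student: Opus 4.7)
The plan is to realize the branched covering $\pi$ at the level of Heegaard diagrams and then apply the equivariant Floer machinery of Seidel--Smith, as adapted to Heegaard Floer homology by Hendricks and Hendricks--Lipshitz--Sarkar. First I would choose a multi-pointed Heegaard diagram $\HD = (\Sigma,\alphas,\betas,\ws,\zs)$ for $(Y,L)$ adapted to the Seifert surface $F$, using one pair of basepoints $(w_i,z_i)$ per component of $L$ so that each link component is recovered by concatenating a pair of arcs (one in each handlebody) lying on $F$. Branched covering $\Sigma$ along these basepoints then gives an explicit multi-pointed Heegaard diagram $\widetilde\HD = (\widetilde\Sigma,\talphas,\tbetas,\widetilde\ws,\widetilde\zs)$ for $\Sigma(Y,L)$, equipped with a genuine smooth involution $\tau$ from the deck transformation that fixes the basepoints. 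Because $\pi^{-1}(L)$ has $\ell$ components, both multi-pointed complexes satisfy $H_*(\CFa(\HD)) \cong \HFa(Y) \otimes H_*(T^{\ell-1})$ and $H_*(\CFa(\widetilde\HD)) \cong \HFa(\Sigma(Y,L)) \otimes H_*(T^{\ell-1})$, which supplies the torus tensor factors in the theorem.

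Next I would arrange all auxiliary data (the almost complex structure and any Hamiltonian perturbations) to be $\tau$-symmetric, so that $\tau$ induces an involution on $\CFa(\widetilde\HD,\pi^*\spinc)$. Using the homotopy-involution framework, I would form the Tate-type complex
\[
  \bigl(\CFa(\widetilde\HD,\pi^*\spinc) \otimes \FF_2[[\theta,\theta^{-1}],\ \partial + (1+\tau_\#)\theta + \cdots \bigr),
\]
where the higher $\theta$-terms encode the equivariance $A_\infty$ data. Filtering by powers of $\theta$ yields a spectral sequence whose $E^1$-page is
\[
  \HFa(\Sigma(Y,L),\pi^*\spinc) \otimes H_*(T^{\ell-1}) \otimes \FF_2[[\theta,\theta^{-1}],
\]
as required.

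The remaining work is a Smith-style localization statement: the spectral sequence converges to the homology of the $\tau$-fixed subcomplex of $\CFa(\widetilde\HD,\pi^*\spinc)$, tensored with $\FF_2[[\theta,\theta^{-1}]$. I would identify the $\tau$-fixed generators as precisely the lifts of generators of $\CFa(\HD)$, and then run a doubling argument identifying $\tau$-invariant holomorphic curves upstairs with honest holomorphic curves downstairs, so that the induced fixed-complex differential agrees with the ordinary Heegaard Floer differential on $\CFa(\HD)$. Tracking $\SpinC$-structures, a generator in the $\spinc'$-summand of $\CFa(\HD)$ contributes to the fixed complex of $\CFa(\widetilde\HD,\pi^*\spinc)$ exactly when $\pi^*\spinc' = \pi^*\spinc$, producing the claimed $E^\infty$-page
\[
  \bigoplus_{\pi^*\spinc' = \pi^*\spinc} \HFa(Y,\spinc') \otimes H_*(T^{\ell-1}) \otimes \FF_2[[\theta,\theta^{-1}].
\]
The dimension inequality then follows immediately from convergence, since ranks can only drop across the pages of a spectral sequence.

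The main obstacle, as in all arguments of this flavor, is the analytic content of the last step: one must build a $\tau$-equivariant family of almost complex structures that simultaneously achieves transversality for the equivariant moduli spaces upstairs and for the ordinary moduli spaces downstairs, and then show via the doubling correspondence that $\tau$-invariant index-one curves on $\widetilde\HD$ are in bijection with index-one curves on $\HD$. A secondary technical point is to set up the pullback $\SpinC$-structure $\pi^*\spinc$ carefully using $F$ (this is where the Seifert surface enters) and to verify that summands of $\CFa(\widetilde\HD)$ not invariant under $\tau$ pair off with their $\tau$-conjugates and contribute trivially to the equivariant theory, so that only the $\pi^*\spinc$-summand participates in the spectral sequence.
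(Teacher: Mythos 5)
Your overall framework---realizing $\tau$ as the deck transformation of a branched double cover of a Heegaard diagram and feeding the resulting involution on $(\Sym^{2g}(\wt{\Sigma}),\TT_{\wt{\alpha}},\TT_{\wt{\beta}})$ into Seidel--Smith-type equivariant Floer theory---is the right one and matches the paper. But there is a genuine gap at the step you describe as ``a Smith-style localization statement,'' and you have misidentified where the real difficulty lies. You propose to prove localization by choosing a $\tau$-equivariant almost complex structure achieving transversality and then matching $\tau$-invariant curves upstairs with curves downstairs via a doubling argument. Equivariant transversality is generically \emph{obstructed} in Lagrangian Floer theory: an invariant holomorphic strip can be forced by the symmetry to persist in a moduli space of the wrong dimension, and no equivariant perturbation removes it. This is exactly why Seidel--Smith's localization theorem carries the hypothesis of stable normal triviality of the fixed sets (and why Large's generalization requires a stable tangent--normal isomorphism of polarization data). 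Verifying that hypothesis for $\bigl(\Sym^{2g}(\wt{\Sigma}\setminus\{\wt{z}\})^\fix,\TT_{\wt{\alpha}}^\fix,\TT_{\wt{\beta}}^\fix\bigr)$ is the main content of the paper (Section~\ref{sec:normalIso}): one starts from Large's tangent--normal isomorphism over $\Sym^{2g}(\wt{\Sigma}\setminus\{\wt{z},\wt{w}\})^\fix$ and must correct it by a $K^1$-class, detected by Chern characters, so that it extends over the divisor $\{w\}\times\Sym^{g-1}(\Sigma)$ without disturbing the Lagrangian subbundle data. This is precisely where the hypothesis that $L$ is nullhomologous enters (via the class $\PD[F]\in H^1(Y\setminus L)$ in Lemma~\ref{lemma:works-for-cohomology}). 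Your proposal never engages with this hypothesis, so as written the localization step does not go through. Relatedly, the spectral sequence does not converge to the homology of the ``$\tau$-fixed subcomplex'' of $\CFa(\wt{\HD})$ (the invariant generators do not form a subcomplex whose differential is the downstairs one); it converges to the Floer homology of the fixed Lagrangians inside the fixed symplectic manifold $\Sym^{2g}(\wt{\Sigma}\setminus\{\wt{z}\})^\fix\cong\Sym^{g}(\Sigma\setminus\{z\})$, and identifying that with $\HFa(Y)$ is part of what the polarization hypothesis buys you.

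Two smaller points. First, your route to the $H_*(T^{\ell-1})$ factor (a multi-pointed diagram with one basepoint pair per component) differs from the paper, which proves the knot case first and then handles links by Ozsv\'ath--Szab\'o knotification, identifying $\Sigma(Y\#_{\ell-1}S^2\times S^1,\kappa_L)$ with $\Sigma(Y,L)\#_{\ell-1}S^2\times S^1$ and matching pullback $\SpinC$-structures. Your multi-pointed route is plausible but would force you to redo the tangent--normal extension argument with $2\ell$ punctures rather than two, so it is not obviously cheaper. Second, your $\SpinC$ bookkeeping is essentially correct in outline, but the clean statement is that the components of the path space $P(\TT_{\alpha},\TT_{\beta})$ inject into the $\tau$-fixed components of $P(\TT_{\wt{\alpha}},\TT_{\wt{\beta}})$ compatibly with $\spinc\mapsto\pi^*\spinc$; non-$\tau$-invariant components of $P(\TT_{\wt{\alpha}},\TT_{\wt{\beta}})$ contribute only $\theta$-torsion and drop out after inverting $\theta$, which is the precise sense in which they ``pair off.''
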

Here, $T^{\ell-1}$ denotes the $(\ell-1)$-dimensional torus, so
$H_*(T^{\ell-1})$ is isomorphic to the exterior algebra on $\ell-1$
generators. An oriented link $L\subset Y$ is nullhomologous if
$[L]=0\in H_1(Y)$; we do not require each component to be
nullhomologous. The pullback $\SpinC$-structure $\pi^*\spinc$ is
explained in Definition~\ref{def:pullback-spinc}. Throughout this
paper, Floer homology groups have coefficients in $\FF_2$ or an
$\FF_2$-module, and tensor products are over $\FF_2$ unless otherwise noted.

Theorem~\ref{thm:main} is part of a growing literature on the behavior
of Heegaard Floer homology under various kinds of covers. Previously,
Hendricks~\cite{Hendricks12:dcov-localization} used Seidel-Smith's
localization theorem for Lagrangian intersection Floer
theory~\cite{SeidelSmith10:localization} to prove a similar result for
the knot Floer homology of the double point set, as well as a spectral
sequence for the Floer homology of $2$-periodic links in
$S^3$~\cite{Hendricks:periodic-localization} (see
also~\cite{HLS:HEquivariant,Boyle:inequalities}).
Lidman-Manolescu~\cite{LidmanManolescu18:SWF-covers}
used Manolescu's homotopical refinement of
monopole Floer homology~\cite{Manolescu03:SW-spectrum-1} (see also~\cite{LidmanManolescu18:equivalence}) to prove an
analogue of Theorem~\ref{thm:main} for unbranched $p$-fold regular
covers between rational homology spheres.
Lipshitz-Treumann~\cite{LT:hoch-loc} used bordered Floer homology,
Hochschild homology, and a Yoneda-type argument to prove analogous
results for certain $2$-fold covers of $3$-manifolds with $b_1>0$ as
well as for the knot Floer homology of knots with genus $\leq
2$. (See also Remark~\ref{rem:ordinary-covs}.) Hendricks-Lipshitz-Sarkar~\cite{HLS:HEquivariant} deduced the
special case $Y=S^3$ of Theorem~\ref{thm:main} from Seidel-Smith's
localization theorem, and used it to construct concordance invariants
of knots.

Most recently, Large proved a generalization of Seidel-Smith's
localization theorem and used it to prove there are spectral sequences
for the knot Floer homology of branched double covers and $\HFa$ of
ordinary double covers under less restrictive hypotheses~\cite{Large}. We deduce
Theorem~\ref{thm:main} from Large's localization theorem. The main
work is to check that the bundle-theoretic hypotheses his result
requires hold in the setting of $\HFa$ of branched double covers (see
Section~\ref{sec:normalIso}).

Theorem~\ref{thm:main} has a number of corollaries. Recall that a
rational homology sphere $Y$ is a (modulo-2) \emph{$L$-space} if $\dim
\HFa(Y) = |H_1(Y)|$, the minimum possible dimension of $\HFa(Y)$;
this is equivalent to $\HFred(Y) = 0$.  
\begin{corollary}\label{cor:non-L}
Let $L$ be a nullhomologous link in $Y.$  If $b_1(\Sigma(Y,L)) \leq 1$ and $\HFred(\Sigma(Y,L)) = 0$, then $\HFred(Y) = 0$.  In particular, if $\Sigma(Y,L)$ is an $L$-space then $Y$ is an $L$-space.
\end{corollary}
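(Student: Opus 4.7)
The plan is to apply Theorem~\ref{thm:main} one $\SpinC$-structure at a time, keeping only the $\spinc'=\spinc$ summand on the right-hand side of the resulting dimension inequality. This produces the pointwise bound
\[
  \dim \HFa(Y,\spinc) \leq \dim \HFa(\Sigma(Y,L),\pi^*\spinc)
\]
for every $\spinc\in\SpinC(Y)$, and I would then combine this upper bound with the standard lower bounds on $\dim \HFa(Y,\spinc)$ together with a parity argument in the rational-homology-sphere case to force $\HFred(Y,\spinc)=0$.

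First I would run the transfer for $\pi\co\Sigma(Y,L)\to Y$ with $\QQ$-coefficients to conclude $b_1(Y)\leq b_1(\Sigma(Y,L))\leq 1$ and that $\pi^*\co H^2(Y;\QQ)\into H^2(\Sigma(Y,L);\QQ)$ is injective. Because $L$ is nullhomologous, its preimage $\pi^{-1}(L)$ is nullhomologous in $\Sigma(Y,L)$, which ensures that Definition~\ref{def:pullback-spinc} satisfies $c_1(\pi^*\spinc) = \pi^*c_1(\spinc)$; together with the rational injectivity of $\pi^*$ this means $\spinc$ is torsion if and only if $\pi^*\spinc$ is torsion. The hypotheses $\HFred(\Sigma(Y,L))=0$ and $b_1(\Sigma(Y,L))\leq 1$ then give
\[
  \dim \HFa(\Sigma(Y,L),\spinct) = \begin{cases} 2^{b_1(\Sigma(Y,L))} & \text{if $\spinct$ is torsion,}\\ 0 & \text{otherwise,} \end{cases}
\]
so the pointwise bound says $\dim \HFa(Y,\spinc)\leq 2$ in every case. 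I would then split on whether $\spinc$ is torsion and on the value of $b_1(Y)$. If $\spinc$ is non-torsion then $\pi^*\spinc$ is non-torsion, the bound forces $\HFa(Y,\spinc)=0$, and since $\HFinf(Y,\spinc)=0$ we conclude $\HFred(Y,\spinc)=0$. If $\spinc$ is torsion and $b_1(Y)=1$, then $\dim\HFa(Y,\spinc)\geq 2^{b_1(Y)}=2$, the upper bound of $2$ forces equality, and so $\HFred(Y,\spinc)=0$. If $\spinc$ is torsion and $b_1(Y)=0$, then $Y$ is a rational homology sphere, so $\dim\HFa(Y,\spinc)\geq 1$; moreover $\chi(\HFa(Y,\spinc))=\pm 1$, which forces the dimension to be odd, and combined with the upper bound of $2$ this pins $\dim\HFa(Y,\spinc)=1$, i.e., $\HFred(Y,\spinc)=0$.

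Summing over $\spinc\in\SpinC(Y)$ gives $\HFred(Y)=0$. The ``in particular'' statement is immediate: if $\Sigma(Y,L)$ is an $L$-space then $b_1(\Sigma(Y,L))=0$ and $\HFred(\Sigma(Y,L))=0$, so by the main statement $\HFred(Y)=0$; and the transfer forces $b_1(Y)=0$, so $Y$ is a rational homology sphere with $\HFred(Y)=0$, hence an $L$-space. The most delicate case is the torsion case with $b_1(Y)=0$: the upper bound of $2$ alone cannot distinguish an $L$-space $\spinc$-structure from a non-$L$-space one, so invoking the parity of $\chi(\HFa)$ for rational homology spheres is the step that actually closes the argument.
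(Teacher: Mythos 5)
Your proposal is correct and follows essentially the same route as the paper: apply the $\SpinC$-refined rank inequality of Theorem~\ref{thm:main}, use the degree-two map (transfer) to get $b_1(Y)\leq b_1(\Sigma(Y,L))$ and that $\spinc$ is torsion iff $\pi^*\spinc$ is, and then close the rational-homology-sphere case with the parity of $\chi(\HFa(Y,\spinc))$. The only difference is cosmetic — you organize the case analysis by the type of $\spinc$ first rather than by $b_1(\Sigma(Y,L))$ — and every ingredient you invoke matches one used in the paper's argument.
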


Ni points out that when restricting to non-torsion $\SpinC$ structures, Corollary~\ref{cor:non-L} follows easily from the Thurston norm detection of Floer homology without Theorem~\ref{thm:main} and requires no constraints on $b_1$.  

Boyer-Gordon-Watson~\cite{BoyerGordonWatson13:LspaceLO} conjectured
that an irreducible rational homology sphere $Y$ is an $L$-space if
and only if $\pi_1(Y)$ does not admit a left-invariant total order.  This is known as the \emph{$L$-space conjecture}.  By
work of Boyer-Rolfsen-Wiest~\cite[Theorem 1.1]{BoyerRolfsenWiest05:orderable},
if $\pi_1(Y)$ does not admit a
left-invariant total order then neither does the fundamental group of any $3$-manifold $Y'$
which admits a non-zero degree map from $Y$.  So,
Corollary~\ref{cor:non-L} provides some further evidence for
Boyer-Gordon-Watson's conjecture. In particular, we have:  
\begin{corollary}
Let $L$ be a nullhomologous link in an irreducible rational homology sphere $Y$.   If $\Sigma(Y,L)$ is an irreducible $L$-space and satisfies the $L$-space conjecture, then so does $Y$.  
\end{corollary}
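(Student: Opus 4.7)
The plan is to handle the two directions of the $L$-space conjecture for $Y$ independently, each by appealing to a result already in hand: Corollary~\ref{cor:non-L} for the Floer-theoretic side, and the Boyer-Rolfsen-Wiest theorem recalled immediately above the statement for the orderability side.

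First, I would observe that $\Sigma(Y,L)$, being an $L$-space, is in particular a rational homology sphere, so $b_1(\Sigma(Y,L))=0\leq 1$ and $\HFred(\Sigma(Y,L))=0$. Corollary~\ref{cor:non-L} then applies directly and yields $\HFred(Y)=0$. Since $Y$ is itself a rational homology sphere by hypothesis, this means $Y$ is an $L$-space.

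Second, by the assumption that $\Sigma(Y,L)$ is an $L$-space that satisfies the $L$-space conjecture, the group $\pi_1(\Sigma(Y,L))$ does not admit a left-invariant total order. The branched covering projection $\pi\co\Sigma(Y,L)\to Y$ is a proper, degree-$2$ map between closed oriented $3$-manifolds, and in particular has nonzero degree. The Boyer-Rolfsen-Wiest theorem quoted in the excerpt then propagates non-left-orderability from $\pi_1(\Sigma(Y,L))$ to $\pi_1(Y)$, giving that $\pi_1(Y)$ is not left-orderable either.

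Combining the two steps, $Y$ is an $L$-space whose fundamental group is not left-orderable, which is precisely what it means for $Y$ to satisfy the $L$-space conjecture. The only facts to verify beyond the two cited results are the elementary bookkeeping observations that $\Sigma(Y,L)$ is a rational homology sphere (automatic from the $L$-space hypothesis) and that the branched double cover $\pi$ has degree $2$, so there is no real obstacle in the deduction itself — all the depth lives in Corollary~\ref{cor:non-L} (and hence ultimately in Theorem~\ref{thm:main}) together with the already-established Boyer-Rolfsen-Wiest theorem.
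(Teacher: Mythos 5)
Your proposal is correct and is exactly the argument the paper intends: the corollary is stated as an immediate consequence of Corollary~\ref{cor:non-L} (which gives that $Y$ is an $L$-space) together with the Boyer--Rolfsen--Wiest theorem applied to the degree-two map $\Sigma(Y,L)\to Y$ (which transports non-left-orderability of $\pi_1$ downstairs). The paper gives no separate proof, and your write-up supplies precisely the intended bookkeeping.
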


\begin{remark}
It has also been conjectured that an irreducible rational homology sphere $Y$ is an $L$-space if and only if $Y$ admits a co-orientable taut foliation.  Note that if $Y$ admits a co-orientable taut foliation and $K$ is transverse to the foliation, then $\Sigma(Y,K)$ admits a co-orientable taut foliation as well.  However, there are nullhomologous knots which cannot be transverse to the foliation (e.g. if the knot is nullhomotopic) and Theorem~\ref{thm:main} still predicts that the branched double cover should admit a co-orientable taut foliation if it is irreducible.  It would be interesting to see evidence of this through foliations.  
\end{remark}

\begin{remark}
  We do not know if the restriction that $L$ be nullhomologous in
  Theorem~\ref{thm:main} is necessary: in light of the $L$-space
  conjecture, perhaps the condition that $[L]=0\in H_1(Y;\ZZ/2\ZZ)$
  suffices. (This condition is needed to define a branched double
  cover at all.) The main step where we use that $L$ is nullhomologous
  is the proof of Lemma~\ref{lemma:works-for-cohomology}, which is
  used to prove Proposition~\ref{prop:trivialization}.
\end{remark}

Theorem~\ref{thm:main} also has some corollaries pertaining to the structure of Floer homology.
\begin{corollary}\label{cor:involution}
  If $\dim \HFa(\Sigma(Y,L),\pi^*\spinc) = \dim \HFa(Y,\spinc)$ then the
  involution $\tau_*$ on the Floer homology $\HFa(\Sigma(Y,L),\pi^*\spinc)$ of the branched double cover is the identity.
\end{corollary}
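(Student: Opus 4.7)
The plan is to extract the conclusion from the collapse of the spectral sequence of Theorem~\ref{thm:main}. That spectral sequence, constructed via Large's generalization of the Seidel-Smith localization theorem, is of Smith/Tate type for the $\ZZ/2$-action of the deck transformation $\tau$: its $E^1$-page is $\HFa(\Sigma(Y,L),\pi^*\spinc) \otimes H_*(T^{\ell-1}) \otimes \FF_2[[\theta,\theta^{-1}]]$, and its $d_1$-differential is induced, up to multiplication by the invertible element $\theta$, by the map $1+\tau_*$ on $\HFa(\Sigma(Y,L),\pi^*\spinc)$.

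First, I would combine the hypothesis with the inequality of Theorem~\ref{thm:main}: since $\spinc$ itself appears as one of the summands, one has
\[
  \dim \HFa(\Sigma(Y,L),\pi^*\spinc) \geq \sum_{\pi^*\spinc'=\pi^*\spinc} \dim \HFa(Y,\spinc') \geq \dim \HFa(Y,\spinc),
\]
and the outer terms agree by assumption, so all three quantities must coincide. This forces two things: the spectral sequence collapses at $E^1$ (since the total rank of the $E^1$-page already equals the total rank of the abutment, over $\FF_2[[\theta,\theta^{-1}]]$ and after cancelling the common $H_*(T^{\ell-1})$-factor), and $\spinc$ is the unique $\SpinC$-structure in the preimage of $\pi^*\spinc$ with nontrivial Floer homology.

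From $E^1 = E^\infty$ we get in particular $d_1 = 0$, so by the identification above, $1+\tau_* = 0$ on $\HFa(\Sigma(Y,L),\pi^*\spinc)$, which over $\FF_2$ is precisely the statement that $\tau_* = \id$.

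The principal thing to verify is the identification of $d_1$ with the action of $1+\tau_*$. This is a standard and intrinsic feature of Smith/Seidel-Smith-type localization spectral sequences, familiar from \cite{Hendricks12:dcov-localization, HLS:HEquivariant}, and it transfers to the branched double-cover setting of the present paper once the bundle-theoretic hypotheses of Large's theorem have been verified (which is the technical content handled in Section~\ref{sec:normalIso}). Given that identification, the corollary follows at once from the collapse argument above.
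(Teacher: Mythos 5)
Your argument is correct and is essentially the paper's own proof, just run in the contrapositive direction: the paper observes that if $\tau_*\neq\id$ then $d_1=(1+\tau_*)\theta\neq 0$, forcing $\dim\HFa(Y,\spinc)<\dim\HFa(\Sigma(Y,L),\pi^*\spinc)$ and contradicting the hypothesis, while you deduce collapse of the spectral sequence from the equality of ranks and then conclude $1+\tau_*=0$. Both hinge on the same identification of $d_1$ with $(1+\tau_*)\theta$, so there is no substantive difference.
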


\begin{remark}
  The above corollary does not require the use of the main theorem if
  $\Sigma(Y,L)$ is an L-space or $L$ is the Borromean
  knot in $\#_{2g} S^2 \times S^1$.  We do not know any examples
  satisfying the hypothesis of the corollary when $Y$ has non-trivial
  reduced Floer homology.
\end{remark}

\begin{corollary}
Let $Y$ be a homology sphere with a non-trivial surgery to $S^3$.  Let $K$ be a knot in $Y$ such that $\Sigma(Y,K)$ is an L-space.  Then $Y = S^3$ or the Poincar\'e homology sphere.  
\end{corollary}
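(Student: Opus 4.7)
The plan is to use Corollary~\ref{cor:non-L} to reduce the problem to a classification statement about L-space integer homology spheres that arise as surgery on knots in $S^3$. First, since $Y$ is an integer homology sphere, every knot in $Y$ is nullhomologous; in particular, Corollary~\ref{cor:non-L} applies to $K$, so the hypothesis that $\Sigma(Y,K)$ is an L-space forces $Y$ itself to be an L-space.

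Next, I would reinterpret the surgery hypothesis in terms of $S^3$: if $Y_\alpha(K') = S^3$ is non-trivial, then since both $Y$ and $S^3$ are integer homology spheres, a quick first-homology computation forces $\alpha = \pm 1/q$ in the canonical framing. Passing to the dual knot $J \subset S^3$ then exhibits $Y$ as $S^3_{1/n}(J)$ for some nonzero integer $n$ and some knot $J$.

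The remaining step is to classify L-space integer homology spheres of the form $S^3_{1/n}(J)$. By the L-space surgery classification due to Ozsv\'ath--Szab\'o (with extensions by Hedden), a positive surgery $S^3_{p/q}(J)$ is an L-space only if $J$ is an L-space knot and $p/q \geq 2g(J) - 1$. Applying this to the slope $1/|n| \leq 1$ (after taking the mirror of $J$ if $n<0$) forces $g(J) \leq 1$. Combining Ni's fiberedness theorem with the Alexander polynomial constraints on L-space knots, the only such knots are the unknot and the right-handed trefoil. Since $1/n$ surgery on the unknot is $S^3$, and $\pm 1$ surgery on the right-handed (respectively left-handed) trefoil is $\mp \Sigma(2,3,5)$, we obtain $Y \in \{S^3, \pm\Sigma(2,3,5)\}$, i.e., $Y$ is either $S^3$ or the Poincar\'e homology sphere. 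The main obstacle is keeping the signs and mirror conventions straight in this last step; the Heegaard Floer input has been cleanly isolated in Corollary~\ref{cor:non-L} and the cited surgery classification results.
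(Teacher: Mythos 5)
Your proposal is correct and follows essentially the same route as the paper: apply Corollary~\ref{cor:non-L} to conclude that $Y$ must be an L-space, then pass to the dual knot to realize $Y$ as surgery on a knot in $S^3$ and invoke the known classification (via Ghiggini's genus-one fibered result and Ozsv\'ath--Szab\'o's genus bounds) of integer homology sphere L-spaces arising this way. The paper phrases the Floer step contrapositively and cites the classification more tersely, but the content is identical to what you propose.
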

\begin{proof}
Let $Y$ be a homology sphere obtained by surgery on a knot in $S^3$.
By work of Ghiggini~\cite{Ghiggini08:FiberedGenusOne} and Ozsv\'ath-Szab\'o~\cite{OS04:ThurstonNorm}, either $Y$ is not an L-space, $Y$ is the Poincar\'e homology sphere, or $Y = S^3$ and $K$ is the unknot.  If $Y$ is not an L-space, Corollary~\ref{cor:non-L} implies that $\Sigma(Y,K)$ cannot be an L-space.
\end{proof}

\begin{remark}
If we additionally ask that $K$ be a knot realizing the $S^3$ surgery,
we obtain stronger constraints.  If $Y$ is $S^3$, then $K$ is
unknotted by Gordon-Luecke~\cite{GordonLuecke:knotcomplement}.  If $Y$
is the Poincar\'e homology sphere, then by Ghiggini's theorem, $K$ is
the core of surgery on the right-handed trefoil, that is, the singular
fiber of order 5 in the unique Seifert fibered structure on the
Poincar\'e homology sphere.  We can compute the double cover of
$\Sigma(2,3,5)$ branched over the singular fiber of order 5:  it is the Seifert fibered space $S^2(-1;1/3,
1/3,2/5)$.  This manifold is not an L-space (see for
example~\cite{LiscaStipsicz:OSIII}).   
Hence, if $K$ is a knot in a homology sphere $Y$ with a non-trivial surgery to $S^3$ and branched double cover an L-space
then $Y$ is $S^3$ and $K$ is the unknot.
\end{remark}

Here is another application of the main theorem:
\begin{proposition}\label{prop:tangle-rep}
Let $K$ be a knot in a prime homology sphere $Y$. Assume that $K$ has
determinant 1 and is obtained from the unknot by a rational tangle replacement.  If $\Sigma(Y,K)$ is an L-space then either $K$ is isotopic to an unknot or $\pm T_{3,5}$ in an embedded $B^3$.
\end{proposition}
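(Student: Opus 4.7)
The plan is to combine the main rank inequality (via Corollary~\ref{cor:non-L}) with Montesinos' trick and the known classifications of integer-homology-sphere L-spaces. First, since $\det K = 1$, the branched double cover $\Sigma(Y,K)$ is an integer homology sphere; in particular $b_1(\Sigma(Y,K)) = 0 \leq 1$, so Corollary~\ref{cor:non-L} forces $Y$ itself to be an L-space. Combined with the hypothesis that $Y$ is prime, the Ghiggini--Ozsv\'ath--Szab\'o machinery used in the preceding surgery corollary (Ghiggini's fibered-knot theorem together with the Thurston-norm detection of Floer homology and Theorem~\ref{thm:main}) reduces us to $Y \in \{S^3, \Sigma(2,3,5)\}$.

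Second, I invoke Montesinos' trick. The rational tangle replacement in the ball $B \subset Y$ taking the unknot $U$ to $K$ lifts to a Dehn surgery in the branched double cover: the branched double covers of the two rational tangles in $B$ are solid tori $V_U$ and $V_K$, so $\Sigma(Y,K) = \Sigma(Y,U)_\alpha(J)$, where $J$ is the core of $V_U$ and $\alpha$ is determined by the gluing. Combined with $\det K = 1$, this realizes $\Sigma(Y,K)$ as an integer-homology-sphere L-space obtained by Dehn surgery on a specific knot in $\Sigma(Y,U)$.

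Third, I carry out the classification in each case. If $Y = S^3$, then $\Sigma(S^3,K)$ is an integer-homology-sphere L-space arising from Dehn surgery on a knot in $S^3$, so by the argument in the preceding Remark it must be either $S^3$---in which case Gordon--Luecke's knot complement theorem forces $K$ to be the unknot---or $\Sigma(2,3,5)$---in which case Ghiggini's classification of fibered knots admitting a $\Sigma(2,3,5)$-surgery forces $K = \pm T_{3,5}$. If $Y = \Sigma(2,3,5)$, the computation in the preceding Remark (that the branched double cover of $\Sigma(2,3,5)$ over the singular fiber of order 5 is the non-L-space Seifert fibered space $S^2(-1;1/3,1/3,2/5)$) and its analogues rule out any non-local rational tangle replacement yielding an L-space branched cover; hence $K$ must lie in an embedded $B^3 \subset Y$, and the $S^3$-case classification applies inside that ball.

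The main obstacle is the last step in the $Y = \Sigma(2,3,5)$ case: ruling out non-local rational tangle replacements in the Poincar\'e sphere that yield an L-space branched double cover. This requires a careful enumeration, using the catalog of L-space knots in $\Sigma(2,3,5)$ admitting integer-homology-sphere L-space surgeries together with explicit Seifert-fibered computations generalizing the one in the Remark.
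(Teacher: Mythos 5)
Your opening reduction does not go through. Corollary~\ref{cor:non-L} (applied to the integer homology sphere $\Sigma(Y,K)$, so $b_1=0$) only tells you that $Y$ is an L-space. Concluding from ``$Y$ is a prime integer homology sphere L-space'' that $Y\in\{S^3,\Sigma(2,3,5)\}$ is exactly the Heegaard Floer Poincar\'e Conjecture, which is open; the corollary preceding Proposition~\ref{prop:tangle-rep} reaches that conclusion only because of its extra hypothesis that $Y$ admits a non-trivial surgery to $S^3$, which is not assumed here. (The remark after the paper's proof makes this point explicitly: assuming the Heegaard Floer Poincar\'e Conjecture, the proposition has an easier proof.) Even granting the reduction, your $Y=\Sigma(2,3,5)$ branch is admittedly incomplete, and your Montesinos setup there would have to start from $\Sigma(Y,U)=Y\#Y=\Sigma(2,3,5)\#\Sigma(2,3,5)$, not from $\Sigma(2,3,5)$; the enumeration you defer is the whole content of that case. (Smaller attribution issues: $\Sigma(S^3,K)\cong S^3$ implies $K$ is unknotted by the Smith conjecture, not Gordon--Luecke, and $\Sigma(S^3,K)\cong\pm\Sigma(2,3,5)$ forces $K=\pm T_{3,5}$ via the classification of involutions, not Ghiggini's surgery theorem.)

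The paper's proof avoids classifying $Y$ altogether. It uses your Montesinos observation, but with the key identification $\Sigma(Y,U)=Y\#Y$, so $\Sigma(Y,K)$ is $1/n$-surgery on a knot $J$ in the homology sphere L-space $Y\#Y$. The L-space surgery genus bound $|\alpha|\geq 2g(P)-1$ then forces $g(J)\leq 1$ (and $g(J)=0$ when $|n|\geq 2$). If $J$ is unknotted, the equivariant loop theorem plus $\det K=1$ shows $K$ is unknotted. If $g(J)=1$ and $n=\pm1$, Baldwin's classification of genus-one fibered L-space knots in homology sphere L-spaces leaves only $(S^3,\pm T_{2,3})$ and $(\mp\Sigma(2,3,5),F_5)$; the first yields $\Sigma(Y,K)=Y\#Y\#\pm\Sigma(2,3,5)$ and hence, by Kim--Tollefson and the rigidity of involutions on $\Sigma(2,3,5)$, that $K$ is $\pm T_{3,5}$ in a ball, while the second contradicts primeness of $Y$. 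If you want to salvage your outline, replace the first paragraph with this surgery-theoretic analysis of $J\subset Y\#Y$; the case split on the ambient manifold $Y$ cannot be justified with current technology.
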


We also prove an analogue of Theorem~\ref{thm:main} for sutured Floer homology:

\begin{proposition}\label{prop:sutured-inequality}
  Let $(M,\gamma)$ be a balanced sutured manifold and $L \subset M$ a
  nullhomologous link with $\ell>0$ components, and let
  $(\Sigma(M,L),\wt{\gamma})$ denote a double cover of $M$ branched over $L$ 
  with the induced sutures.  Then, there is a
  spectral sequence with $E^1$ page
  $\SFH(\Sigma(M,L), \wt{\gamma}) \otimes H_*(T^\ell) \otimes
  \FF_2[[\theta, \theta^{-1}]$ converging to
  $\SFH(M,\gamma) \otimes H_*(T^{\ell}) \otimes \FF_2[[\theta,
  \theta^{-1}]$.  In
  particular, 
\[
  \dim \SFH(\Sigma(M,L), \wt{\gamma}) \geq \dim \SFH(M, \gamma).  
\]
\end{proposition}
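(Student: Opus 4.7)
The plan is to deduce Proposition~\ref{prop:sutured-inequality} from Theorem~\ref{thm:main} via Juhász's sutured closure construction. Given a balanced sutured manifold $(M,\gamma)$ containing a nullhomologous link $L \subset \mathrm{int}(M)$, I would first choose a closure $(Y,R)$: a closed $3$-manifold $Y \supset M$ together with a distinguished surface $R \subset Y$ of some genus $g$ satisfying
\[
\SFH(M,\gamma) \cong \HFa(Y\mid R) := \bigoplus_{\langle c_1(\spinc), [R]\rangle = 2g-2} \HFa(Y,\spinc),
\]
arranged so that the gluing region is disjoint from $L$ (possible since $L \subset \mathrm{int}(M)$). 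Since $L$ bounds a Seifert surface $F \subset M$, it remains nullhomologous in $Y$, and the branched double cover $\pi\co \Sigma(Y,L)\to Y$ is defined. The cover is trivial over the closure region, so the preimage $\pi^{-1}(R)$ splits into two disjoint copies, either of which exhibits $\Sigma(Y,L)$ as a Juhász closure of the sutured branched cover $(\Sigma(M,L),\wt\gamma)$; Juhász's theorem then yields $\SFH(\Sigma(M,L),\wt\gamma) \cong \HFa(\Sigma(Y,L)\mid \wt R)$.

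Next, I would apply Theorem~\ref{thm:main} to $(Y,L)$ for each $\SpinC$-structure $\spinc$ with $\langle c_1(\spinc),[R]\rangle = 2g-2$. Because $\pi$ is trivial near $R$, the pullback satisfies $\langle c_1(\pi^*\spinc),[\wt R]\rangle = \langle c_1(\spinc),[R]\rangle = 2g-2$, so whenever $\spinc$ is extremal with respect to $R$, $\pi^*\spinc$ is automatically extremal with respect to $\wt R$. Summing the resulting spectral sequences over all such $\spinc$ and invoking Juhász's identifications on both sides produces a spectral sequence of the desired shape, with the closed Floer groups replaced by the sutured ones.

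The main obstacle I foresee is the torus factor count: this approach inherits a factor $H_*(T^{\ell-1})$ from Theorem~\ref{thm:main}, whereas the statement has $H_*(T^\ell)$. The extra circle must enter from the sutured setting; I expect this arises because a Heegaard diagram for the closure carrying $L$ admits one more free basepoint than in the closed case, contributing an additional torus to the fixed locus of the involution on the symmetric product. Establishing this cleanly likely requires revisiting the Heegaard-diagrammatic proof of Theorem~\ref{thm:main} --- namely Large's localization theorem applied to the relevant fixed locus --- and redoing the fixed-point computation in the sutured setting, rather than quoting Theorem~\ref{thm:main} as a black box. Secondary points, such as compatibility of the $\SpinC$-structure decomposition with the spectral sequence and the existence of a closure disjoint from $L$, should be straightforward to verify.
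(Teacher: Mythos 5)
Your route through closures is not the paper's, and as you yourself concede at the end, it does not actually close: you finish by saying one ``likely requires revisiting the Heegaard-diagrammatic proof \dots in the sutured setting, rather than quoting Theorem~\ref{thm:main} as a black box,'' which is precisely what the paper does, and what your sketch does not do. The paper never forms a closure. It defines a doubly-pointed \emph{sutured} Heegaard diagram $(\Sigma,\alphas,\betas,z,w)$ for $K\subset(M,\gamma)$, takes the branched double cover of the diagram itself, and applies Large's theorem (Lemma~\ref{lem:sutured-Large}) directly to $\TT_{\wt\alpha},\TT_{\wt\beta}\subset\Sym^{2d}(\wt\Sigma\setminus\{\wt z\})$ with fixed locus $\Sym^{d}(\Sigma\setminus\{z\})$; the tangent--normal isomorphism of Section~\ref{sec:normalIso} carries over verbatim because the punctured sutured Heegaard surface still deformation retracts onto a wedge of circles, so Corollary~\ref{corollary:integral-iso} and Lemma~\ref{lemma:works-for-cohomology} apply unchanged. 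Your diagnosis of the torus discrepancy is also off: the extra circle has nothing to do with ``one more free basepoint in the fixed locus.'' It arises because the Lagrangian Floer homology in $\Sym^{d}(\Sigma\setminus\{z\})$ computes $\SFH(M^\circ,\gamma^\circ)$, the sutured Floer homology of $M$ with an open ball removed and an equatorial suture added, and $\SFH(Z^\circ,\eta^\circ)\cong\SFH(Z,\eta)\otimes H_*(S^1)$; one such stabilization per deleted basepoint accounts for $H_*(T^\ell)$ in place of $H_*(T^{\ell-1})$.

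Beyond the torus factor, the closure step itself has unverified content. The identification $\SFH(M,\gamma)\cong\HFa(Y\mid R)$ for a Heegaard Floer closure is a substantial theorem in its own right, not a routine citation. More seriously, since the branched cover is trivial over $\partial M$, the boundary $\partial\Sigma(M,L)$ consists of \emph{two} copies of $\partial M$, so $\Sigma(Y,L)$ is obtained from $\Sigma(M,L)$ by gluing on two disjoint copies of the auxiliary piece; one must check that this is again a legitimate closure and decide what extremality means with respect to the disconnected surface $\pi^{-1}(R)=R_1\sqcup R_2$ (being extremal for $R_1$ alone does not obviously pick out the summand computing $\SFH(\Sigma(M,L),\wt\gamma)$). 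Even granting all of this, you would obtain a spectral sequence carrying $H_*(T^{\ell-1})$, which suffices for the rank inequality but is not the statement being proved. So the proposal identifies a plausible alternative strategy and correctly senses where it strains, but the actual argument has to be the direct diagrammatic one.
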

Note that here we have $H_*(T^\ell)$ instead of $H_*(T^{\ell - 1})$ as
in Theorem~\ref{thm:main}. Branched covers of sutured manifolds are
discussed further in Section~\ref{sec:sutured}.

There is a relationship between Theorem~\ref{thm:main} and the
Smith conjecture~\cite{Smith39:conjecture,Morgan84:SmithConjecture}. Specifically, the Smith conjecture
implies that $\ZZ/p$-actions on $S^3$ with nonempty fixed sets are standard, so $S^3$ is not the branched cover of any other
$3$-manifold. Theorem~\ref{thm:main} implies the weaker statement that
if $S^3$ is the branched cover of $Y$ then $Y$ is an
$L$-space integer homology sphere. Ozsv\'ath-Szab\'o conjecture that the only irreducible integer homology sphere
$L$-spaces are $S^3$ and the Poincar\'e homology
sphere~\cite[Section 1.5]{OS06:Clay2} (see also~\cite[Conjecture 1]{HeddenLevine:splicing});
this is sometimes referred to, somewhat drolly, as the Heegaard Floer
Poincar\'e Conjecture. Together with the Heegaard Floer Poincar\'e
Conjecture, Theorem~\ref{thm:main} implies that if $S^3$ or the
Poincar\'e homology sphere is a branched cover of $Y$ then $Y$ is
itself a connect sum of copies of the Poincar\'e sphere.

It would be interesting to obtain a similar result in Seiberg-Witten
theory, extending Lidman-Manolescu's
work~\cite{LidmanManolescu18:SWF-covers}. In particular, such a result
would perhaps entail studying Seiberg-Witten solutions on the orbifold
quotient of the branched double cover, and relating them with the
underlying manifold.  There have been a number of other results on the
Heegaard or Seiberg-Witten Floer homology of branched covers with
which it would also be interesting to
compare~\cite{Kang:transverse,Kang:strong,AlfieriKangStipsicz,KarakurtLidman:inequalities,LRS:Froyshov,LRS:Lefschetz}.
In particular, perhaps Lin-Ruberman-Saveliev's
techniques~\cite{LRS:Lefschetz} could lead to a
Seiberg-Witten-theoretic proof of Theorem~\ref{thm:main}.

This paper is organized as follows. Section~\ref{sec:background}
recalls Large's localization theorem and some background about
$K$-theory and maps of stable vector
bundles. Section~\ref{sec:normalIso} verifies the main hypothesis for
Large's localization theorem, an isomorphism between the stable relative
tangent and normal bundles to the fixed sets. Section~\ref{sec:proof}
verifies the remaining hypotheses and deduces
Theorem~\ref{thm:main}. Finally, Section~\ref{sec:applications}
discusses applications of Theorem~\ref{thm:main}, as well as Proposition~\ref{prop:sutured-inequality} 
for sutured Floer homology.

\thinspace

\noindent\textbf{Acknowledgments.}
We thank Steven Frankel, Tim Large,
Yi Ni, Rachel Roberts, and Chuck Weibel for interesting
discussions. We thank the referee for further suggestions and corrections.

\section{Background}\label{sec:background}
\subsection{Polarization data}
The following definitions are drawn from Large's paper~\cite[Section 3.2]{Large}.

\begin{definition} Let $(M, L_0, L_1)$ be a symplectic manifold and
  two Lagrangian submanifolds. A 
\emph{set of polarization data} for $(M, L_0, L_1)$ is a triple $\mathfrak p = (E, F_0, F_1)$ where
\begin{itemize}
\item $E$ is a symplectic vector bundle over $M$ 
\item $F_i$ is a Lagrangian subbundle of $E|_{L_i}$ for $i=0,1$.\end{itemize}
\end{definition}

Given $(M, L_0, L_1)$ and $\mathfrak p = (E, F_0, F_1)$ a set of polarization data for $(M, L_0, L_1)$, we may stabilize to obtain $\mathfrak p \oplus \ul{\CC} = (E\oplus \ul{\CC}, F_0\oplus \ul{\RR}, F_1 \oplus i\ul{\RR})$.

\begin{definition} Let
  $\mathfrak p = (E, F_0, F_1)$ and $\mathfrak p'= (E', F'_0, F'_1)$ be two sets of polarization data for $(M, L_0, L_1)$. An \emph{isomorphism of polarization data} is an isomorphism of symplectic vector bundles
\[\alpha \co E \rightarrow E'\]
\noindent such that there are homotopies of Lagrangian subbundles of
$E'|_{L_i}$ between $\alpha(F_i)$ and $F_i'$ for $i=0,1$ (so that the
subbundles stay Lagrangian throughout the homotopy). A \emph{stable isomorphism of polarization data} between $\mathfrak p$ and $\mathfrak p'$ is an isomorphism of polarization data between $\mathfrak p \oplus \ul{\CC}^n$ and $\mathfrak p' \oplus \ul{\CC}^{n'}$ for some $n,n'$.\end{definition}

One special case of this definition will be of particular importance.
Suppose $(M,L_0,L_1)$ is
equipped with a symplectic involution preserving $L_0$ and $L_1$
setwise. Let $(M^\fix, L_0^\fix, L_1^\fix)$ denote the fixed sets
under the involution.
Then there are two sets of polarization data for
$(M^\fix, L_0^\fix, L_1^\fix)$: the \emph{tangent polarization}
$(TM^\fix, TL_0^\fix, TL_1^\fix)$ consisting of the tangent bundles to
$M^\fix$ and $L_i^\fix$, and the \emph{normal polarization}
$(NM^\fix, NL_0^\fix, NL_1^\fix)$ consisting of the normal bundles to $M^{\fix} \subset M$ and $L_i^{\fix}\subset L_i$.

\begin{definition}
  With notation as above, a \emph{stable tangent-normal isomorphism} is a stable
  isomorphism of polarization data between the tangent polarization
  $(TM^\fix, TL_0^\fix, TL_1^\fix)$ and the normal polarization
  $(NM^\fix, NL_0^\fix, NL_1^\fix)$.
\end{definition}

\subsection{Large's localization theorem}\label{sec:Large}

The following is an immediate consequence of Large's construction of
equivariant Floer homology and its formal properties (including his
localization isomorphism):
\begin{theorem}\label{thm:Large}\cite{Large} Suppose that
  \begin{enumerate}[label=(L\arabic*)]
  \item\label{item:symp-hyp} $M$ is an exact symplectic manifold and convex at infinity, and $L_0$, $L_1$ are exact Lagrangians such that either $L_0$ and $L_1$ are compact or $M$ is a symplectization near infinity and $L_0$ and $L_1$ are conical and disjoint near infinity;
  \item\label{item:tau-hyp} $\tau$ is a symplectic involution of $M$
    preserving the $L_i$ setwise, and $(M^\fix, L^\fix_0, L^\fix_1)$ are the
    fixed sets under $\tau$; and
  \item there is a stable tangent-normal isomorphism between the
    data $(NM^\fix,
    NL_0^\fix, NL_1^\fix)$ and $(TM^\fix, TL_0^\fix, TL_1^\fix)$.
  \end{enumerate}
  Then there is an ungraded spectral sequence with $E_1$-page
  isomorphic to $\HF(L_0,L_1)\otimes_{\FF_2}\FF_2[[\theta,\theta^{-1}]$
  converging to $\HF(L_0^\fix,L_1^\fix)\otimes_{\FF_2}\FF_2[[\theta,\theta^{-1}]$.  In
  particular, there is a rank inequality
  $\dim_{\FF_2}\HF(L_0,L_1)\geq
  \dim_{\FF_2}\HF(L_0^\fix,L_1^\fix)$.
\end{theorem}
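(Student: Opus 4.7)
The plan is to assemble Large's machinery in a direct way: under hypotheses~\ref{item:symp-hyp} and~\ref{item:tau-hyp} one has Large's Borel equivariant Lagrangian Floer complex, which produces a spectral sequence from $\HF(L_0,L_1)\otimes \FF_2[[\theta]]$ to the equivariant Floer homology $\HF_{\ZZ/2}(L_0,L_1)$; inverting $\theta$ and then applying Large's localization isomorphism, whose bundle-theoretic input is precisely item~(3), converts this into the desired spectral sequence with target $\HF(L_0^\fix,L_1^\fix)\otimes \FF_2[[\theta,\theta^{-1}]$.

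More concretely, I would first invoke Large's equivariant Floer chain complex $CF_{\ZZ/2}(L_0,L_1)$, a module over $\FF_2[[\theta]]$ whose Borel-type filtration has associated graded $CF(L_0,L_1)\otimes \FF_2[[\theta]]$. This gives a spectral sequence with $E_1$-page $\HF(L_0,L_1)\otimes \FF_2[[\theta]]$ converging to $\HF_{\ZZ/2}(L_0,L_1)$. Because $\theta$ acts as a formal variable on pages, inverting $\theta$ produces a spectral sequence of $\FF_2[[\theta,\theta^{-1}]$-modules with $E_1$-page $\HF(L_0,L_1)\otimes \FF_2[[\theta,\theta^{-1}]$ converging to the localized equivariant group $\HF_{\ZZ/2}(L_0,L_1)[\theta^{-1}]$.

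Next I would apply Large's localization isomorphism, whose hypotheses are exactly~\ref{item:symp-hyp}, \ref{item:tau-hyp}, and the stable tangent-normal isomorphism (3), to identify
\[
\HF_{\ZZ/2}(L_0,L_1)[\theta^{-1}]\cong \HF_{\ZZ/2}(L_0^\fix,L_1^\fix)[\theta^{-1}].
\]
Since $\tau$ acts trivially on the fixed set, its equivariant Floer homology degenerates to $\HF(L_0^\fix,L_1^\fix)\otimes \FF_2[[\theta]]$, and inverting $\theta$ gives $\HF(L_0^\fix,L_1^\fix)\otimes \FF_2[[\theta,\theta^{-1}]$. Composing these identifications yields the claimed spectral sequence. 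The rank inequality then follows formally: the $E_\infty$-page is a subquotient of the $E_1$-page and both are free $\FF_2[[\theta,\theta^{-1}]$-modules of ranks $\dim_{\FF_2}\HF(L_0,L_1)$ and $\dim_{\FF_2}\HF(L_0^\fix,L_1^\fix)$ respectively.

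I do not expect a substantial obstacle in this proof itself, as the authors signal with the phrase \emph{immediate consequence}: all the deep technical content is Large's (the equivariant chain complex, its formal properties, and especially the localization theorem). The real work of the paper, as the introduction flags, is verifying the bundle-theoretic hypothesis~(3) in the branched double cover setting, which is the job of Sections~\ref{sec:normalIso} and~\ref{sec:proof}. The only sub-step that warrants any care here is the passage from the unlocalized to the localized spectral sequence, which requires that the Borel filtration be compatible with the $\theta$-action in the standard way; this should be automatic from Large's construction.
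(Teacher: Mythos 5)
Your high-level route is the same as the paper's---both are assemblies of Large's results---but your assembly misstates the localization step in a way that hides exactly where hypothesis~(3) and the exactness of the Lagrangians are actually used. First, Large's localization theorem is not the statement $\HF_{\ZZ/2}(L_0,L_1)[\theta^{-1}]\cong \HF_{\ZZ/2}(L_0^\fix,L_1^\fix)[\theta^{-1}]$. What he proves (his Theorem~1.1) is
\[
\HF^{KM}_{\ZZ/2\ZZ}(L_0,L_1)\otimes_{\FF_2[\theta]}\FF_2[\theta,\theta^{-1}]\cong \twHF(L_0^\fix,L_1^\fix;\mathfrak{p}_N),
\]
where the right-hand side is a Floer homology of the fixed Lagrangians \emph{twisted by the normal polarization} $\mathfrak{p}_N$---not an equivariant group of the fixed set---and this isomorphism requires no tangent--normal hypothesis at all. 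The stable tangent--normal isomorphism enters only afterwards, to replace $\mathfrak{p}_N$ by the tangent polarization $\mathfrak{p}_T$.

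Second, and more seriously, your assertion that ``since $\tau$ acts trivially on the fixed set, its equivariant Floer homology degenerates to $\HF(L_0^\fix,L_1^\fix)\otimes\FF_2[[\theta]]$'' is not automatic and is false in general: for a trivial action the equivariant complex is $\CF(L_0^\fix,L_1^\fix)\otimes\FF_2[[\theta]]$ with differential of the form $\partial+\theta Q_1+\theta^2 Q_2+\cdots$, and the higher equivariant corrections are Steenrod-square-type operations that need not vanish. The identification $\twHF(L_0^\fix,L_1^\fix;\mathfrak{p}_T)\cong\HF(L_0^\fix,L_1^\fix)\otimes\FF_2[\theta,\theta^{-1}]$ is the content of Large's Proposition~9.5, proved via his total Steenrod square and the action filtration, and it is precisely here that exactness of the Lagrangians from hypothesis~\ref{item:symp-hyp} is used. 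As written, your proof has a gap at the step identifying the target of the spectral sequence with $\HF(L_0^\fix,L_1^\fix)\otimes\FF_2[\theta,\theta^{-1}]$. (A smaller omission: Large works with two models of equivariant Floer homology---a Seidel--Smith-type model carrying the spectral sequence and a Kronheimer--Mrowka-type blow-up model carrying the localization map---and one must also invoke his comparison theorem between the two.)
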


\begin{proof}
  This argument is essentially given by Large~\cite[Proof of Theorem 1.4]{Large}; we
  summarize it here. First, under the hypotheses~\ref{item:symp-hyp}
  and~\ref{item:tau-hyp}, Seidel-Smith~\cite[Section 3.2]{SeidelSmith10:localization} couple the $\dbar$-equation on
  $(M,L_0,L_1)$ to Morse theory on $\RR P^\infty$ to
  construct $\ZZ/2\ZZ$-equivariant Floer homology groups
  $HF^{SS}_{\ZZ/2\ZZ}(L_0, L_1)$ and a spectral sequence 
  \begin{equation}\label{eq:ssss}
    \HF(L_0, L_1)\otimes_{\FF_2}\FF_2[[\theta]]\Rightarrow \HF^{SS}_{\ZZ/2\ZZ}(L_0, L_1).
  \end{equation}
  (See also~\cite{HLS:HEquivariant} for an equivalent construction.)
  Under the same hypotheses, Large uses a blow-up construction
  analogous to Kronheimer-Mrowka's construction of monopole Floer
  homology to define another equivariant cohomology group
  $\HF^{KM}_{\ZZ/2\ZZ}(L_0,L_1)$. He then shows~\cite[Theorem 1.2 or
  Theorem 8.1]{Large} that 
  \begin{equation}
    \HF^{KM}_{\ZZ/2\ZZ}(L_0,L_1)\otimes_{\FF_2[\theta]}\FF_2[[\theta]]\cong \HF^{SS}_{\ZZ/2\ZZ}(L_0, L_1).
  \end{equation}

  Given a set of polarization data $\mathfrak{p}$, under
  hypothesis~\ref{item:symp-hyp} Large also constructs a
  Floer homology twisted by $\mathfrak{p}$, $\twHF(L_0, L_1;
  \mathfrak{p})$. In the special case that $\mathfrak{p}_N$ is the
  normal polarization for $(M^\fix,L_0^\fix,L_1^\fix)$, he shows~\cite[Theorem 1.1]{Large} that there is an isomorphism
  \begin{equation}\label{eq:blow-up}
    HF^{KM}_{\ZZ/2\ZZ}(L_0,L_1)\otimes_{\FF_2[\theta]}\FF_2[\theta,\theta^{-1}] \cong \twHF(L_0^\fix, L_1^\fix; \mathfrak{p}_N).
  \end{equation}
  On the other hand, using what he calls the total Steenrod square
  (coming from the $\ZZ/2\ZZ$-action on $M\times M$ exchanging the
  factors), he shows~\cite[Proposition 9.5]{Large} that for the tangent polarization $\mathfrak{p}_T$,
  \begin{equation}
    \twHF(L_0^\fix, L_1^\fix; \mathfrak{p}_T) \cong HF(L_0^\fix, L_1^\fix) \otimes_{\FF_2} \FF[\theta,\theta^{-1}].
  \end{equation}
 (This uses the action filtration. In
  particular, exactness of the Lagrangians is used here.)
  The existence of a tangent-normal isomorphism yields an isomorphism
  \begin{equation}
    \twHF(L_0^\fix, L_1^\fix; \mathfrak{p}_N) \cong \twHF(L_0^\fix, L_1^\fix; \mathfrak{p}_T).
  \end{equation}
  Combining these formulas gives the spectral sequence. Finally, the
  rank inequality over $\FF_2$ follows from the universal coefficient
  theorem, bearing in mind that $\FF_2$ and
  $\FF_2[[\theta,\theta^{-1}]$ are fields.
\end{proof}

We note a minor refinement of Large's result. Let $P(L_0,L_1)$ denote
the space of paths from $L_0$ to $L_1$. For $x\in L_0\cap L_1$ there
is a corresponding constant path $[x]\in P(L_0,L_1)$.
Two points $x,y\in L_0\cap L_1$ can be connected by a Whitney disk if and only if $[x]$
and $[y]$ lie in the same component of $P(L_0,L_1)$. So, the Floer complex
$\CF(L_0,L_1)$ decomposes as a direct sum
\begin{equation}\label{eq:decomp-1}
  \CF(L_0,L_1)=\bigoplus_{\spinc\in\pi_0 P(L_0,L_1)}\CF(L_0,L_1;\spinc).
\end{equation}
The relevance for us is that, in Heegaard Floer homology, the path
components of $P(\TT_\alpha,\TT_\beta)$ correspond to the
$\SpinC$-structures on $Y$.

In the setting of Theorem~\ref{thm:Large}, there is an inclusion map
$\iota\co P(L_0^\fix,L_1^\fix)\into P(L_0,L_1)$, inducing a set map
$\iota_*\co \pi_0P(L_0^\fix,L_1^\fix)\to \pi_0P(L_0,L_1)$. The map
$\iota_*$ is typically neither injective nor surjective.
Large's invariant $\twHF(L_0^\fix,L_1^\fix;\mathfrak{p})$
decomposes along $\pi_0P(L_0^\fix,L_1^\fix)$ as
\begin{align}\label{eq:decomp-2}
  \twHF(L_0^\fix,L_1^\fix;\mathfrak{p})&=\bigoplus_{\spinc\in\pi_0P(L_0^\fix,L_1^\fix)}\twHF(L_0^\fix,L_1^\fix;\mathfrak{p};\spinc)
\end{align}
and hence also as
\begin{align}\label{eq:decomp-2b}
  \twHF(L_0^\fix,L_1^\fix;\mathfrak{p})&=\bigoplus_{\wt{\spinc}\in\pi_0P(L_0,L_1)}\bigoplus_{\spinc\in\iota_*^{-1}(\wt{\spinc})}\twHF(L_0^\fix,L_1^\fix;\mathfrak{p};\spinc).
\end{align}
Both the invariants $\HF_{\ZZ/2\ZZ}^{KM}(L_0,L_1)$ and
$\HF_{\ZZ/2\ZZ}^{SS}(L_0,L_1)$ and the Seidel-Smith spectral
sequence~\eqref{eq:ssss} decompose along $\tau$-orbits in $\pi_0P(L_0,L_1)$
as 
\begin{align}
  \HF_{\ZZ/2\ZZ}^{SS/KM}(L_0,L_1)
  =\bigoplus_{[\wt{\spinc}]\in\pi_0P(L_0,L_1)/\tau}&\HF_{\ZZ/2\ZZ}^{SS/KM}(L_0,L_1;[\wt{\spinc}])\label{eq:decomp-3}\\
  \bigoplus_{\wt{\spinc}\in [\wt{\spinc}]}\HF(L_0, L_1;\wt{\spinc})\otimes_{\FF_2}\FF_2[[\theta]]\Rightarrow &\HF^{SS}_{\ZZ/2\ZZ}(L_0, L_1;[\wt{\spinc}]).\label{eq:decomp-4}
\end{align}
Further, the equivariant Steenrod square, which comes from Floer
theory on $M^\fix\times M^\fix$, respects the
decompositions~\eqref{eq:decomp-1} and~\eqref{eq:decomp-2}, and the localization isomorphism~\eqref{eq:blow-up} respects the
decompositions~\eqref{eq:decomp-2b} and~\eqref{eq:decomp-3}. (If $\wt{\spinc}$ is not fixed by $\tau$ then
$\HF_{\ZZ/2\ZZ}^{SS/KM}(L_0,L_1;[\wt{\spinc}])\cong
\HF(L_0,L_1;\wt{\spinc})$ for either representative $\wt{\spinc}$ of
$[\wt{\spinc}]$ and, in particular, is $\theta$-torsion.) 

So, we have:
\begin{proposition}\label{prop:Large-decomp}
  Under the same hypotheses as Theorem~\ref{thm:Large}, for each
  $\wt{\spinc}\in\pi_0P(L_0,L_1)$ there is a spectral sequence
  \[
    \HF(L_0,L_1;\wt{\spinc})\otimes_{\FF_2}\FF_2[[\theta,\theta^{-1}]\Rightarrow \bigoplus_{\spinc\in\iota_*^{-1}(\wt{\spinc})}\HF(L_0^\fix,L_1^\fix;\spinc)\otimes_{\FF_2}\FF_2[[\theta,\theta^{-1}]
  \]
  and a rank inequality
  \[
    \dim_{\FF_2}\HF(L_0,L_1;\wt{\spinc})\geq
    \sum_{\spinc\in\iota_*^{-1}(\wt{\spinc})}\dim_{\FF_2}\HF(L_0^\fix,L_1^\fix;\spinc).
  \]
\end{proposition}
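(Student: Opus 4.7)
The plan is to run through the proof of Theorem~\ref{thm:Large} one path-component at a time, using the compatibilities with the decompositions \eqref{eq:decomp-1}--\eqref{eq:decomp-3} summarized in the paragraph immediately preceding the proposition. The four ingredients I would need are: the Seidel-Smith spectral sequence~\eqref{eq:decomp-4} is $\tau$-orbit-graded on both pages; Large's comparison $\HF^{SS}_{\ZZ/2\ZZ} \cong \HF^{KM}_{\ZZ/2\ZZ}\otimes_{\FF_2[\theta]}\FF_2[[\theta]]$ respects this grading; the blow-up isomorphism~\eqref{eq:blow-up} is compatible with~\eqref{eq:decomp-2b} and~\eqref{eq:decomp-3}; and the total-Steenrod-square identification $\twHF(\cdot;\mathfrak{p}_T)\cong \HF(L_0^\fix,L_1^\fix)\otimes \FF_2[\theta,\theta^{-1}]$ respects~\eqref{eq:decomp-2}.

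I would first dispose of the easy case in which $\wt{\spinc}\in \pi_0P(L_0,L_1)$ is not $\tau$-fixed. Then any intersection point $x\in L_0^\fix\cap L_1^\fix$ automatically represents a $\tau$-invariant path class, so $\iota_*^{-1}(\wt{\spinc})=\emptyset$. By the parenthetical at the end of the paragraph preceding the proposition, $\HF^{SS}_{\ZZ/2\ZZ}(L_0,L_1;[\wt{\spinc}])$ is $\theta$-torsion, so inverting $\theta$ in the $[\wt{\spinc}]$-summand of~\eqref{eq:decomp-4} yields a spectral sequence from $\HF(L_0,L_1;\wt{\spinc})\otimes \FF_2[[\theta,\theta^{-1}]$ converging to zero, matching the (vacuous) rank inequality.

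The substantive case is $\wt{\spinc}$ fixed by $\tau$, where $[\wt{\spinc}]=\{\wt{\spinc}\}$ and the $[\wt{\spinc}]$-summand of~\eqref{eq:decomp-4} reads $\HF(L_0,L_1;\wt{\spinc})\otimes_{\FF_2}\FF_2[[\theta]] \Rightarrow \HF^{SS}_{\ZZ/2\ZZ}(L_0,L_1;\wt{\spinc})$. Inverting $\theta$ is an exact functor on $\FF_2[\theta]$-modules and so preserves the spectral sequence. I would then apply Large's $SS/KM$ comparison and~\eqref{eq:blow-up} in the $[\wt{\spinc}]$-summand, using its compatibility with~\eqref{eq:decomp-2b}, to rewrite the abutment as $\bigoplus_{\spinc\in\iota_*^{-1}(\wt{\spinc})}\twHF(L_0^\fix,L_1^\fix;\mathfrak{p}_N;\spinc)$; and finally apply the $\spinc$-wise tangent-normal isomorphism together with the total-Steenrod-square identification to reach the advertised target. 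The rank inequality then follows by comparing $E_1$ and $E_\infty$.

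The main step that requires care, as opposed to a genuinely new idea, is checking that the total-Steenrod-square identification of $\twHF(L_0^\fix,L_1^\fix;\mathfrak{p}_T;\spinc)$ with $\HF(L_0^\fix,L_1^\fix;\spinc)\otimes \FF_2[\theta,\theta^{-1}]$ is block-diagonal in $\pi_0P(L_0^\fix,L_1^\fix)$. This ought to follow because the Floer theory on $(M^\fix\times M^\fix, L_0^\fix\times L_0^\fix, L_1^\fix\times L_1^\fix)$ defining the total Steenrod square is graded by $\pi_0P(L_0^\fix,L_1^\fix)^2$, and the diagonal substitution used in Large's argument restricts to the diagonal copy of $\pi_0P(L_0^\fix,L_1^\fix)$ inside this square. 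Once this compatibility is recorded, the proposition is purely an assembly of pieces already constructed in the proof of Theorem~\ref{thm:Large}.
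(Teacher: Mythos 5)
Your proof is correct and follows essentially the same route as the paper, which presents Proposition~\ref{prop:Large-decomp} as an immediate assembly of the compatibilities of the Seidel--Smith and Kronheimer--Mrowka-style equivariant groups, the blow-up isomorphism~\eqref{eq:blow-up}, and the total Steenrod square with the decompositions~\eqref{eq:decomp-1}--\eqref{eq:decomp-4}, exactly as you do. The only quibble is in the degenerate case where $\wt{\spinc}$ is not $\tau$-fixed: the $[\wt{\spinc}]$-summand of~\eqref{eq:decomp-4} has $E_1$-page $\bigl(\HF(L_0,L_1;\wt{\spinc})\oplus\HF(L_0,L_1;\tau_*\wt{\spinc})\bigr)\otimes_{\FF_2}\FF_2[[\theta]]$ rather than a single copy, so it does not literally ``yield'' the advertised spectral sequence for a single $\wt{\spinc}$ there --- but in that case $\iota_*^{-1}(\wt{\spinc})=\emptyset$ and the rank inequality is vacuous, so nothing of substance is affected.
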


\subsection{\texorpdfstring{$K$}{K}-theory and maps of stable vector bundles}\label{sec:K1}
In this section we recall some notions related to the $K$-theory of
complex vector bundles.  We consider bundles over a CW complex $X$
which is homotopy equivalent to a finite CW complex.

We focus particularly on maps between stable bundles. The main goal is
to recall that the set of homotopy classes of isomorphisms between
stable bundles is an affine copy of $K^1(X)$ and hence, under
favorable conditions, there is a Chern character isomorphism from this
set to the odd cohomology of $X$.

\begin{definition}
  Let $E,E'$ be complex vector bundles over a base $X$. A \emph{stable
    isomorphism} from $E$ to $E'$ is a bundle isomorphism
  \[
    f\co E\oplus \ul{\CC}^N\to E'\oplus\ul{\CC}^N
  \]
  for some integer $N$.  Stable isomorphisms compose in the obvious
  way.

  Two stable isomorphisms
  $f_i\co E\oplus \ul{\CC}^{N_i}\to E'\oplus \ul{\CC}^{N_i}$, $i=1,2$, are
  \emph{homotopic} if there is an integer $M\geq \max\{N_1,N_2\}$ and
  a homotopy between
  \[
    f_1\oplus\Id_{\CC^{M-N_1}},f_2\oplus\Id_{\CC^{M-N_2}}\co
    E\oplus\ul{\CC}^M\to E'\oplus\ul{\CC}^M.
  \]

  Let $\Iso(E,E')$ denote the set of homotopy classes of stable
  isomorphisms from $E$ to $E'$.
\end{definition}

\begin{definition}\label{def:action}
  Let $\ul{\CC}^0$ denote the trivial $0$-dimensional vector
  bundle over $X$. Let $E,E'$ be vector bundles over $X$ so that
  $\Iso(E,E')\neq\emptyset$. Given $[f]\in\Iso(E,E')$ and
  $[g]\in\Iso(\ul{\CC}^0,\ul{\CC}^0)$ define
  $[f*g]\in\Iso(E,E')$ as follows. The map $f$ is a bundle
  isomorphism $E\oplus\ul{\CC}^N\to E'\oplus\ul{\CC}^N$ and the map $g$ is a
  bundle isomorphism $\ul{\CC}^M\to\ul{\CC}^M$, for some integers $M$,
  $N$. Then $[f*g]$ is the homotopy class of the bundle isomorphism
  $f\oplus g\co E\oplus\ul{\CC}^N\oplus\ul{\CC}^M\to E'\oplus\ul{\CC}^N\oplus\ul{\CC}^M$.
\end{definition}

\begin{proposition}\label{prop:torsor}
  Let $X$ be a CW complex homotopy equivalent to a finite CW
  complex. Then, given complex vector bundles $E,E'$ over $X$ with $\Iso(E,E')\neq\emptyset$, 
  Definition~\ref{def:action} defines an action of
  $\Iso(\ul{\CC}^0,\ul{\CC}^0)$ on
  $\Iso(E,E')$. Further, this action makes $\Iso(E,E')$ into a torsor
  over $\Iso(\ul{\CC}^0,\ul{\CC}^0)$.
\end{proposition}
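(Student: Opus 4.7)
I will establish three properties in order: well-definedness of the action, the axioms of a group action, and the free + transitive conditions. Well-definedness with respect to homotopy of representatives and to stabilization level is immediate from the definition, since the direct sum of homotopies is a homotopy and further stabilization by $\Id_{\ul{\CC}^K}$ commutes with direct sum. The identity element of $\Iso(\ul{\CC}^0, \ul{\CC}^0)$ (represented by $\Id\co \ul{\CC}^0 \to \ul{\CC}^0$, or equivalently $\Id_{\ul{\CC}^M}$ after stabilization) acts trivially. The group structure on $\Iso(\ul{\CC}^0, \ul{\CC}^0)$ itself is given by direct sum, with inverses supplied by the swap lemma (see below); associativity of the action then reduces to associativity of $\oplus$.

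The content of the proposition lies in freeness and transitivity, both of which rely on two standard facts. First, since $X$ is homotopy equivalent to a finite CW complex, every bundle $W$ over $X$ admits a complement $V$ with $W \oplus V \cong \ul{\CC}^K$. Second, the ``swap lemma'' states that for any automorphism $\phi$ of a bundle $W$, the automorphism $\phi \oplus \phi^{-1}$ of $W \oplus W$ is homotopic to the identity through bundle automorphisms (a classical block-matrix rotation argument in $GL_{2n}(\CC)$). For transitivity, given $[f_0], [f_1] \in \Iso(E, E')$, I stabilize to a common level to obtain $f_i \co E \oplus \ul{\CC}^N \to E' \oplus \ul{\CC}^N$ and set $\phi := f_1^{-1} \circ f_0 \in \Aut(E \oplus \ul{\CC}^N)$. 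Choosing a complement $V$ and a trivialization $T \co E \oplus \ul{\CC}^N \oplus V \xrightarrow{\cong} \ul{\CC}^M$, I define $g \in \Iso(\ul{\CC}^0, \ul{\CC}^0)$ as the class of $T \circ (\phi^{-1} \oplus \Id_V) \circ T^{-1} \co \ul{\CC}^M \to \ul{\CC}^M$. Comparing $f_0 * g$ to $f_1 \oplus \Id_{\ul{\CC}^M}$ and unraveling the trivialization, the ``discrepancy'' automorphism becomes $\phi \oplus \phi^{-1} \oplus \Id_V$, which is null-homotopic by the swap lemma; hence $[f_0] * [g] = [f_1]$.

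Freeness is the same argument in reverse: if $[f] * [g] = [f] * [g']$, then after stabilizing to a common level and precomposing with $f^{-1}$ (well-defined at that level) one obtains that $\Id_{E \oplus \ul{\CC}^N} \oplus g$ and $\Id_{E \oplus \ul{\CC}^N} \oplus g'$ are homotopic as automorphisms. Further stabilization by a complement $V$ of $E \oplus \ul{\CC}^N$ with trivialization $T\co E \oplus \ul{\CC}^N \oplus V \xrightarrow{\cong} \ul{\CC}^K$ transports this to the relation $\Id_{\ul{\CC}^K} \oplus g \simeq \Id_{\ul{\CC}^K} \oplus g'$ as automorphisms of a trivial bundle, which forces $[g] = [g']$ in $\Iso(\ul{\CC}^0, \ul{\CC}^0)$. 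The main (and essentially only) non-formal obstacle throughout is the swap lemma $\phi \oplus \phi^{-1} \simeq \Id$; once this and the existence of complementary bundles are in hand, the rest of the argument is bookkeeping with stabilization levels and direct-sum decompositions.
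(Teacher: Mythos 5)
Your proof is correct and follows essentially the same route as the paper's: transitivity is obtained by choosing a stable complement of the bundle, trivializing, and absorbing the discrepancy automorphism via the block-rotation homotopy $\phi\oplus\phi^{-1}\simeq\Id$ (which the paper states in the equivalent form $k\oplus\ell\simeq(k\circ\ell)\oplus\Id$, citing Atiyah). The only substantive difference is that you also spell out freeness explicitly, whereas the paper reduces to $\Iso(E,E)$ by composing with a fixed $f$ and only verifies the existence half of simple transitivity.
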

\begin{proof}
  The key point is that given a bundle $E''$ and bundle isomorphisms
  $k,\ell\co E''\to E''$ the bundle isomorphisms
  \[
    k\oplus\ell,\ (k\circ\ell)\oplus\Id\co E''\oplus E''\to E''\oplus E''
  \]
  are homotopic. To see this, note that given an invertible $2\times 2$ matrix
  $A$ over $\CC$ there is an induced automorphism $A\co E''\oplus E''\to
  E''\oplus E''$. The homotopy between $k\oplus\ell$ and
  $(k\circ\ell)\oplus\Id$ is given by
  \[
    \begin{pmatrix}
      k & 0\\
      0 & \Id
    \end{pmatrix}
    \begin{pmatrix}
      \cos(\pi t) & -\sin(\pi t)\\
      \sin(\pi t) & \cos(\pi t)
    \end{pmatrix}
    \begin{pmatrix}
      \Id & 0\\
      0 & \ell
    \end{pmatrix}
    \begin{pmatrix}
      \cos(\pi t) & \sin(\pi t)\\
      -\sin(\pi t) & \cos(\pi t)
    \end{pmatrix}
  \]
  (cf.~\cite{Atiyah89:K-book}).

  Using this observation, if $f'=f\oplus \Id_{\CC^K}$ then $f'\oplus
  g\sim f\oplus g\oplus \Id_{\CC^K}$. (Here, the bundle $E''$ in the
  key observation is a trivial bundle.) It follows easily that $[f]*[g]$
  is independent of the choices of representatives $f$ and
  $g$. Next, for appropriate choices of representatives, $[f]*([g]*[h])$ and
  $([f]*[g])*[h]$ agree on the nose.  
  It remains to see that for any pair of elements
  $f,h\in \Iso(E,E')$ there is a $g\in\Iso(\ul{\CC}^0,\ul{\CC}^0)$ so
  that $f*g=h$. 

  Given $[f]\in\Iso(E,E')$, composition with $f$ gives a bijection
  between the $\Iso(\ul{\CC}^0,\ul{\CC}^0)$-sets $\Iso(E,E)$ and
  $\Iso(E,E')$. So, it suffices to prove freeness and transitivity of
  the action in the case that $[f],[h]\in\Iso(E,E)$.
  
  We start with transitivity of the action.
  To keep notation simple, replace $E$ by its sum with a
  high-dimensional trivial bundle, so $f,h\co E\to E$.
  Choose a bundle $F$ so that $E\oplus F$ is isomorphic to
  a trivial bundle $\ul{\CC}^N$. Let $\phi\co E\oplus
  F\stackrel{\cong}{\lra}\ul{\CC}^N$ be an isomorphism. Then we have isomorphisms
  \[
    \phi\circ (f\oplus\Id_F)\circ \phi^{-1},\ \phi\circ (h\oplus \Id_F)\circ \phi^{-1}\co \ul{\CC}^N\to\ul{\CC}^N.
  \]
  Let
  \[
    g= \phi\circ (f\oplus\Id_F)^{-1}\circ
    (h\oplus\Id_F)\circ\phi^{-1}\co \ul{\CC}^N\to\ul{\CC}^N.
  \]
  We claim that $f*g\sim h$. Indeed, 
  applying the key point above, we have
  \begin{align*}
    f*g&=(\Id_E\oplus \phi)\circ(f\oplus f^{-1}\oplus\Id_F)\circ
         (\Id_E\oplus h\oplus \Id_F)\circ(\Id_E\oplus \phi^{-1})\\
       &\sim(\Id_E\oplus \phi)\circ((f\circ f^{-1}\circ h)\oplus \Id_E\oplus
         \Id_F)\circ(\Id_E\oplus \phi^{-1})\\
       &=(\Id_E\oplus\phi)\circ(h\oplus\Id_E \oplus \Id_F)\circ
         (\Id_E\oplus\phi^{-1})\\
       &=h\oplus\Id_{\CC^N},
  \end{align*}
  as desired.

  Similarly, for freeness, suppose that $[f]*[g]=[f]*[g']$. By
  stabilizing as needed, we may assume that $f*g$ and $f*g'$ are
  homotopic maps $E\oplus\ul{\CC}^M\to E\oplus\ul{\CC}^M$. Let $F$ be
  as above. Then
  \[
    f\oplus \Id_F\oplus g\sim f\oplus \Id_F\oplus g'\co
    E\oplus F\oplus \ul{\CC}^M\to E\oplus F\oplus \ul{\CC}^M,
  \]
  so 
  \[
    [\phi\circ (f\oplus \Id_F)]\oplus g\sim [\phi\circ (f\oplus \Id_F)]\oplus g'\co
    \ul{\CC}^N\oplus \ul{\CC}^M\to \ul{\CC}^N\oplus \ul{\CC}^M.
  \]
  Thus, composing both sides with
  $(\phi\circ (f\oplus\Id_F))^{-1}\oplus\Id_{\CC^M}$, the maps
  $\Id_{\CC}^N\oplus g$ and $\Id_{\CC}^N\oplus g'$ are homotopic, so
  $[g]=[g']$. This completes the proof.
\end{proof}

\begin{remark}
  Here is an alternative understanding of
  Proposition~\ref{prop:torsor}. The stable automorphisms of the
  trivial bundle over $X$ are the same as $\pi_1(\Map(X,BU))$, based
  at the constant map. The group of stable automorphisms of a nontrivial bundle
  is the fundamental group of a different path component of
  $\Map(X,BU)$. Since $BU$ is an $h$-space, all path components of
  $\Map(X,BU)$ have isomorphic fundamental groups.
\end{remark}

We can extend the Chern character to stable isomorphisms. Recall that
given an automorphism $f$ of the trivial bundle $\ul{\CC}^N$ over $X$,
the mapping cylinder $\Cyl(f)$ of $f$ is a bundle over $X\times[0,1]$
equipped with a trivialization of
$\Cyl(f)|_{X\times\{0,1\}}$. Specifically,
$\Cyl(f)=\bigl((\ul{\CC}^N\times[0,1])\amalg \ul{\CC}^N\bigr)/\sim$ where
$(x,v,1)\in \ul{\CC}^N\times\{1\}$ is identified to $(x,f(x)(v))\in
\ul{\CC}^N$, and the trivializations over $X\times\{0\}$ and $X$ are
the standard ones. Equivalently, $\Cyl(f)$ is the trivial bundle over
$X\times[0,1]$ where the trivializations over $X\times\{0\}$ and
$X\times\{1\}$ are the standard trivialization and $f$, respectively.

A (stable) trivialization of the relative bundle
$(\Cyl(f),\Cyl(f)|_{X\times\{0,1\}})$ is equivalent to a (stable)
homotopy between $f$ and the identity map. Consequently, the Chern character of
$\Cyl(f)$ is an element
\[
  \chern(f)\in H^{\mathrm{even}}(X\times[0,1],X\times\{0,1\};\QQ)=H^{\mathrm{even}}(SX;\QQ)=H^{\mathrm{odd}}(X;\QQ)
\] 
and the map
\[
  \chern\co \Iso(\ul{\CC}^0,\ul{\CC}^0)\otimes\QQ\to H^{\mathrm{odd}}(X;\QQ)
\]
is an isomorphism.

By Proposition~\ref{prop:torsor}, given an element $[f]\in\Iso(E,E')$,
any other element $[h]\in \Iso(E,E')$ can be written as $[h]=[f]*[g]$ for a
unique $[g]\in\Iso(\ul{\CC}^0,\ul{\CC}^0)$. Define
\[
  \chern_f([h])=\chern([g])\in H^{\mathrm{odd}}(X;\QQ).
\]

In particular, in the case $E=E'$ we can take $f=\Id$, and we have a
canonical choice of Chern character $\chern\co \Iso(E,E)\to
H^{\mathrm{odd}}(X;\QQ)$. Here is an alternative description of the
Chern character in this case. Given $h\in\Iso(E,E)$ the mapping torus
$T_h$ of $h$ is a vector bundle over $X\times S^1$. The maps $X\into X\times
S^1\onto X$ and the canonical generator $[S^1]\in H^1(S^1)$ identify $H^{\mathrm{even}}(X\times S^1)\cong
H^{\mathrm{even}}(X)\oplus H^{\mathrm{odd}}(X)$; the map
$H^{\mathrm{odd}}(X)\to H^{\mathrm{even}}(X\times S^1)$ is $a\mapsto
a\times [S^1]$. We have:
\begin{lemma}
  For $h\in\Iso(E,E)$, the Chern character $\chern(h)$ is the image of
  the Chern character of $T_h$ in $H^{\mathrm{odd}}(X)$.
\end{lemma}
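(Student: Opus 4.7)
The plan is to reduce to the case where $h$ is an automorphism of a trivial bundle via the torsor structure of Proposition~\ref{prop:torsor}, and then to compare the mapping torus with the mapping cylinder using the quotient map $p\co X\times[0,1]\to X\times S^1$ that identifies $X\times\{0\}$ with $X\times\{1\}$ via the identity on $X$.

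First, I would use Proposition~\ref{prop:torsor} to write $[h]=[\Id_E]*[g]$ for a unique $[g]\in\Iso(\ul{\CC}^0,\ul{\CC}^0)$, so by definition $\chern(h)=\chern(g)$. After enough stabilization, $h$ is homotopic through automorphisms to $\Id_E\oplus g$, so the mapping tori $T_h$ and $T_{\Id_E\oplus g}\cong T_{\Id_E}\oplus T_g$ are isomorphic as vector bundles on $X\times S^1$. Additivity of the Chern character gives $\chern(T_h)=\chern(T_{\Id_E})+\chern(T_g)$, and since $T_{\Id_E}$ is pulled back from $X$ along the projection $\pi_X\co X\times S^1\to X$, its Chern character lies in the $H^{\mathrm{even}}(X)$ summand of $H^{\mathrm{even}}(X\times S^1)$ and contributes nothing to the $H^{\mathrm{odd}}(X)$ part. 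This reduces the lemma to the statement that for an automorphism $g$ of a trivial bundle, the image of $\chern(T_g)$ in $H^{\mathrm{odd}}(X)$ equals $\chern(\Cyl(g))$ under the suspension identification.

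For this reduced case, the key geometric observation is that the pullback $p^*T_g$, equipped with the natural trivializations at $t=0$ and $t=1$ coming from the two ``sheets'' over the glued point of $S^1$, is canonically identified with the mapping cylinder $\Cyl(g)$ as a relative vector bundle (possibly up to the convention-dependent involution $g\leftrightarrow g^{-1}$). Naturality of the Chern character then yields $p^*\chern(T_g)=\chern(\Cyl(g))$ as relative classes in $H^{\mathrm{even}}(X\times[0,1],X\times\{0,1\})$.

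It remains to verify that the map of pairs $p\co(X\times[0,1],X\times\{0,1\})\to(X\times S^1,X\times\{\pt\})$ induces an isomorphism on relative cohomology and that the projection $H^{\mathrm{even}}(X\times S^1)\to H^{\mathrm{even}}(X\times S^1,X\times\{\pt\})$ corresponds, via K\"unneth, to the projection onto the $H^{\mathrm{odd}}(X)\otimes H^1(S^1)\cong H^{\mathrm{odd}}(X)$ summand. Combining these identifications with the suspension isomorphism $H^{\mathrm{even}}(X\times[0,1],X\times\{0,1\})\cong\tilde H^{\mathrm{even}}(SX)\cong H^{\mathrm{odd}}(X)$ that is used in the definition of $\chern(\Cyl(g))$ gives the conclusion. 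The main obstacle I anticipate is bookkeeping conventions: pinning down precisely which end of $\Cyl(g)$ carries which trivialization, the orientation of $S^1$ relative to $[0,1]$, and the sign in the suspension isomorphism, so that no spurious $g\leftrightarrow g^{-1}$ remains. Once these are fixed, the geometric identification of $p^*T_g$ with $\Cyl(g)$ is essentially tautological and the cohomological matching follows from the K\"unneth formula and the long exact sequence of the pair.
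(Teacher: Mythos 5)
The paper offers no argument here (``the proof is left to the reader''), so there is no approach to compare against; your proposal is a correct and essentially complete filling-in, via the standard identification of $p^*T_g$, with its two boundary trivializations, with the relative bundle $\Cyl(g)$ over $(X\times[0,1],X\times\{0,1\})$, combined with the reduction to trivial bundles through the torsor structure. The only points to tighten are ones you already flag: the ``projection'' $H^{\mathrm{even}}(X\times S^1)\to H^{\mathrm{even}}(X\times S^1,X\times\{\pt\})$ exists only after using the splitting $\pi^*$ to kill the $H^{\mathrm{even}}(X)$ summand (the natural map goes the other way in the long exact sequence of the pair), and the $g\leftrightarrow g^{-1}$ convention at worst introduces a global sign, which is consistent with the level of precision in the paper's own definition of $\Cyl(f)$ and immaterial for its applications.
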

\begin{proof}
  We first reduce to the case that $E$ is the trivial bundle. Write
  $h=\Id*g$, where $g\in\Iso(\ul{\CC}^0,\ul{\CC}^0)$. On the one hand,
  $\chern(h)=\chern(g)$. On the other hand, $T_h$ is stably isomorphic
  to $T_\Id\oplus T_g$, so $\chern(T_\Id\oplus T_g)=\chern(T_\Id)+\chern(T_g)$.
  Since $T_\Id=E\times S^1$,
  $\chern(T_\Id)\in H^{\mathrm{even}}(X)\subset H^{\mathrm{even}}(X\times S^1)$.
  Hence, the image of $\chern(T_\Id)$ in $H^{\mathrm{odd}}(X)$ vanishes, so the image of $\chern(T_\Id\oplus T_g)$ is the same as the image of $\chern(T_g)$. 

  So, it remains to show that for $g\in\Iso(\ul{\CC}^0,\ul{\CC}^0)$,
  the class $\chern(g)$ agrees with the image of $\chern(T_g)$.
  Fix a distinguished point $1\in S^1$.
  There is a commutative diagram of bundles and trivializations
  \[
    \xymatrix{
      (T_g,\emptyset)\ar[d]\ar[r] & (T_g,T_g|_{X\times \{1\}}) \ar[d] & (\Cyl(g),\Cyl(g)|_{X\times\{0,1\}})\ar[l]\ar[d]\\
      (X\times S^1,\emptyset)\ar[r] & (X\times S^1,X\times\{1\}) & (X\times[0,1],X\times\{0,1\}).\ar[l]
    }
  \]
  (In the top row, the entries $T_g|_{X\times \{1\}}$ and
  $\Cyl(g)|_{X\times\{0,1\}}$ are shorthand for the fixed
  trivializations of these bundles.)  Further, naturality of the
  cohomology cross product, the definition of the fundamental class in
  cohomology, and naturality of the Chern character give a
  commutative diagram
  \[
    \begin{tikzpicture}[xscale=5.9, yscale=2]
      \node at (0,.5) (chl) {$\chern(T_g)$};
      \node at (1,.5) (chc) {$\chern(T_g,T_g|_{X\times\{1\}})$};
      \node at (2,.5) (chr) {$\chern(\Cyl(g),\Cyl(g)|_{X\times\{0,1\}})$};
      \node[rotate=270] at (0,.25) (inl) {$\in$};
      \node[rotate=270] at (1,.25) (inl) {$\in$};
      \node[rotate=270] at (2,.25) (inl) {$\in$};
      \node at (0,0) (tl) {$H^{\mathrm{even}}(X\times S^1;\QQ)$};
      \node at (1,0) (tc) {$H^{\mathrm{even}}(X\times S^1,X\times 1;\QQ)$};
      \node at (2,0) (tr) {$H^{\mathrm{even}}(X\times[0,1],X\times\{0,1\};\QQ)$};
      \node at (1,-1) (bc) {$H^{\mathrm{odd}}(X;\QQ)$};
      \draw[->] (bc) to node[below]{\lab{\times[S^1]}} (tl);
      \draw[->] (bc) to node[right]{\lab{\times[S^1]}} node[left]{\lab{\cong}} (tc);
      \draw[->] (bc) to node[below]{\lab{\times[0,1]}} node[above]{\lab{\cong}} (tr);
      \draw[->] (tc) to (tl);
      \draw[->] (tc) to node[above]{\lab{\cong}} (tr);
      \draw[|->] (chc) to (chl);
      \draw[|->] (chc) to (chr);
    \end{tikzpicture}
  \]
  The Chern character of $g$ is the preimage of
  $\chern(\Cyl(g),\Cyl(g)|_{0,1})$ under the right diagonal
  isomorphism. Thus, the left diagonal arrow sends the Chern character
  of $g$ to the Chern character of $T_g$, as claimed.
\end{proof}

This Chern character map is natural in the following sense:
\begin{lemma}
  Let $G\co X\to Y$ be a continuous map, $E,E'$ be complex vector bundles over
  $Y$, and $[f],[h]\in\Iso(E,E')$. There are induced isomorphisms
  $[G^*f], [G^*h]\in\Iso(G^*E,G^*E')$. Then,
  \[
    \chern_{G^*f}([G^*h])=G^*\chern_f([h]).
  \]
  In particular, if $E=E'$ then 
  \[
    \chern([G^*h])=G^*\chern([h]).
  \]
\end{lemma}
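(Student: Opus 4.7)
The plan is to reduce the general statement to the naturality of the ordinary Chern character of complex vector bundles, which is classical. The argument comes in two steps: first, reduce from $\Iso(E,E')$ to $\Iso(\ul{\CC}^0,\ul{\CC}^0)$ using the torsor structure of Proposition~\ref{prop:torsor}; second, handle the latter case directly via the mapping cylinder.

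For the first step, by Proposition~\ref{prop:torsor} we may write $[h]=[f]*[g]$ for a unique $[g]\in\Iso(\ul{\CC}^0,\ul{\CC}^0)$, and by definition $\chern_f([h])=\chern([g])$. Pullback by $F$ is manifestly compatible with the operation $*$: the pullback of a bundle isomorphism $f\oplus g\co E\oplus\ul{\CC}^N\oplus\ul{\CC}^M\to E'\oplus\ul{\CC}^N\oplus\ul{\CC}^M$ is $F^*f\oplus F^*g$ of the corresponding form over $X$, and homotopies pull back to homotopies. Hence $[F^*h]=[F^*f]*[F^*g]$ in $\Iso(F^*E,F^*E')$, and consequently
\[
  \chern_{F^*f}([F^*h])=\chern([F^*g]).
\]
So it suffices to prove $\chern([F^*g])=F^*\chern([g])$ for $[g]\in\Iso(\ul{\CC}^0,\ul{\CC}^0)$.

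For this, recall that $\chern([g])$ was defined as the Chern character of the mapping cylinder $\Cyl(g)$, regarded as a complex vector bundle over $Y\times[0,1]$ together with its canonical trivialization over $Y\times\{0,1\}$, viewed in $H^{\mathrm{even}}(Y\times[0,1],Y\times\{0,1\};\QQ)\cong H^{\mathrm{odd}}(Y;\QQ)$. Pullback along $F\times\Id\co X\times[0,1]\to Y\times[0,1]$ gives a natural isomorphism $\Cyl(F^*g)\cong(F\times\Id)^*\Cyl(g)$ compatible with the boundary trivializations. Naturality of the Chern character for pairs of spaces then yields
\[
  \chern(\Cyl(F^*g))=(F\times\Id)^*\chern(\Cyl(g))
\]
in $H^{\mathrm{even}}(X\times[0,1],X\times\{0,1\};\QQ)$, and combining with the natural suspension isomorphism to $H^{\mathrm{odd}}(X;\QQ)$ gives $\chern([F^*g])=F^*\chern([g])$, as desired. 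The special case $E=E'$ follows by taking $f=\Id$, under which $F^*f=\Id$ as well.

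There is no substantive obstacle here: every construction in sight (stabilization by $\ul{\CC}^N$, the torsor action $*$, the mapping cylinder, and the relative Chern character of a bundle with a specified boundary trivialization) is natural under pullback by continuous maps, so the argument is essentially a matter of unwinding definitions and citing naturality of the standard Chern character.
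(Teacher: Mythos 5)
Your proof is correct and is exactly the unwinding of definitions that the paper has in mind; the paper's own proof reads simply ``This is immediate from the definitions.'' Both steps --- compatibility of pullback with the torsor action $*$, and naturality of the relative Chern character of the mapping cylinder --- are the right content, so nothing further is needed.
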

\begin{proof}
  This is immediate from the definitions.
\end{proof}

Finally, the Chern character respects composition:
\begin{lemma}
  If $[h_1],[h_2]\in\Iso(E,E)$ then
  \[
    \chern([h_2\circ h_1])=\chern(h_1)+\chern(h_2).
  \]

  More generally, given bundles $E_1,E_2,E_3$ and maps
  $[f_1],[h_1]\in\Iso(E_1,E_2)$ and $[f_2],[h_2]\in\Iso(E_2,E_3)$ we
  have
  \[
    \chern_{f_2\circ f_1}([h_2\circ h_1])=\chern_{f_1}([h_1])+\chern_{f_2}([h_2]).
  \]
\end{lemma}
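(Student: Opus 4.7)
The plan is to prove the first, special-case identity directly via the mapping torus interpretation of $\chern$ from the previous lemma, and then reduce the general statement to this case using the torsor structure from Proposition~\ref{prop:torsor}. For the first statement, the key geometric observation I would use is that the pinch map $\mu \co S^1 \to S^1 \vee S^1$ pulls back the wedge $T_{h_1} \vee T_{h_2}$ of mapping torus bundles on $X \times (S^1 \vee S^1)$ to $T_{h_2 \circ h_1}$ on $X \times S^1$, since traversing the concatenated loop composes the monodromies in the required order. Because $\mu$ realizes the cogroup comultiplication on $S^1$, the induced map on the $K^{-1}(X)$-summand of $K^0(X \times S^1) \cong K^0(X) \oplus K^{-1}(X)$ is addition. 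Combined with additivity of $\chern$ in $K$-theory and the previous lemma (which identifies $\chern([h])$ with the image of $\chern(T_h)$ in $H^{\mathrm{odd}}(X;\QQ)$), this yields $\chern([h_2 \circ h_1]) = \chern([h_1]) + \chern([h_2])$.

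For the general statement, I would first invoke Proposition~\ref{prop:torsor} to produce the unique classes $[g_i] \in \Iso(\ul\CC^0,\ul\CC^0)$ with $[h_i] = [f_i] * [g_i]$; by definition, $\chern_{f_i}([h_i]) = \chern([g_i])$. After stabilizing so that all maps use a common trivial summand, each $h_i$ is homotopic as a bundle isomorphism to $f_i \oplus g_i$. Block-diagonal composition then gives
\[
(f_2 \oplus g_2) \circ (f_1 \oplus g_1) = (f_2 \circ f_1) \oplus (g_2 \circ g_1),
\]
so $h_2 \circ h_1$ is stably homotopic to $(f_2 \circ f_1) \oplus (g_2 \circ g_1)$, exhibiting the identity $[h_2 \circ h_1] = [f_2 \circ f_1] * [g_2 \circ g_1]$ in $\Iso(E_1, E_3)$. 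Thus $\chern_{f_2 \circ f_1}([h_2 \circ h_1]) = \chern([g_2 \circ g_1])$, and the first statement reduces this to $\chern([g_1]) + \chern([g_2]) = \chern_{f_1}([h_1]) + \chern_{f_2}([h_2])$.

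The main obstacle will be the bookkeeping of dimensions when passing between $h_i$ and the factorization $f_i \oplus g_i$: one must choose representatives and stabilizations carefully so that all compositions are defined on the nose and that the homotopy $h_i \sim f_i \oplus g_i$ may be composed to give the desired homotopy of compositions. This is routine. The conceptual content is the identification of composition of stable bundle automorphisms with addition in $K^{-1}(X)$, which is a direct consequence of the cogroup structure on $S^1$; everything else is definition-chasing and standard naturality.
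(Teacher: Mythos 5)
Your argument is correct, but it inverts the paper's logic and uses a different additivity input. The paper proves the general identity directly and deduces the special case by setting $f_1=f_2=\Id$: writing $[h_i]=[f_i*g_i]$, it asserts $[h_2\circ h_1]=[(f_2\circ f_1)*g_1*g_2]$ (your block-diagonal computation $(f_2\oplus g_2)\circ(f_1\oplus g_1)=(f_2\circ f_1)\oplus(g_2\circ g_1)$ followed by the rotation homotopy $(g_2\circ g_1)\oplus\Id\sim g_1\oplus g_2$ from the proof of Proposition~\ref{prop:torsor}), after which the only remaining input is $\chern(g_1*g_2)=\chern(g_1)+\chern(g_2)$ --- additivity of $\chern$ under \emph{direct sum} of automorphisms of trivial bundles, which is immediate from $\Cyl(g_1\oplus g_2)=\Cyl(g_1)\oplus\Cyl(g_2)$ and additivity of the ordinary Chern character. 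You instead first prove additivity under \emph{composition} via the pinch map $S^1\to S^1\vee S^1$ and the identification of composition of clutching functions with addition in $K^{-1}(X)$, and then reduce the general statement to this, stopping your normal form at $[h_2\circ h_1]=[(f_2\circ f_1)*(g_2\circ g_1)]$ rather than pushing through to $[(f_2\circ f_1)*g_1*g_2]$. Both routes are valid; the paper's is slightly more economical in that it never needs the cogroup structure on $S^1$ (direct-sum additivity suffices), while yours makes the underlying $K^{-1}$ statement explicit and is arguably more conceptual. Your flagged obstacle --- stabilizing so that the homotopies $h_i\sim f_i\oplus g_i$ compose on the nose --- is indeed the only bookkeeping required and is routine, exactly as you say.
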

\begin{proof}
  We prove the more general statement; the special case follows by
  taking $f_1=f_2=\Id$.
  Write $[h_1]=[f_1*g_1]$ and $[h_2]=[f_2*g_2]$. As in the beginning
  of the proof of Proposition~\ref{prop:torsor},
  $[h_2\circ h_1]=[(f_2\circ f_1)*g_1*g_2]$. Hence
  \[
    \chern_{f_2\circ f_1}([h_2\circ h_1])=\chern(g_1*g_2).
  \]
  It is immediate from the construction of the Chern character for
  maps of trivial bundles and additivity of the usual Chern
  character for complex vector bundles that for
  $g_1,g_2\in\Iso(\ul{\CC}^0,\ul{\CC}^0)$, $\chern(g_1*g_2)=\chern(g_1)+\chern(g_2)$.
  The result follows.
\end{proof}

\begin{remark}\label{rem:cx-is-symp}
  Since the inclusion of the unitary group into the symplectic group
  is a homotopy equivalence, the $K$-theory of complex vector bundles
  is the same as the $K$-theory of symplectic vector bundles. In
  particular, one can take the Chern character of symplectic vector
  bundles and isomorphisms between them, and the results of this
  section hold in the symplectic case as well.
\end{remark}

\section{The stable tangent-normal isomorphism}\label{sec:normalIso}
Let $\HD=(\Sigma_g,\alphas,\betas,z,w)$ be a doubly-pointed Heegaard
diagram for a nullhomologous knot $K$ in a $3$-manifold $Y$,
$\pi\co \Sigma(Y,K)\to Y$ a double cover of $Y$ branched along $K$,
and $\wt{K}=\pi^{-1}(Y)$. There is an induced doubly-pointed Heegaard
diagram
$\wt{\HD}=(\wt{\Sigma}_{2g},\wt{\alphas},\wt{\betas},\wt{z},\wt{w})$
for $(\Sigma(Y,K),\wt{K})$ as follows. Viewing $\Sigma$ as a subset of
$Y$, $\wt{\Sigma}=\pi^{-1}(\Sigma)$. The preimage of $\alphas$
(respectively $\betas$) is a collection of $2g$ circles $\wt{\alphas}$
(respectively $\wt{\betas}$) in $\wt{\Sigma}$, and the preimage of $z$
(respectively $w$) is a point $\wt{z}$ (respectively $\wt{w}$) in
$\wt{\Sigma}$.

The covering involution $\tau\co \Sigma(Y,K)\to\Sigma(Y,K)$ induces an
involution $\tau$ of $\wt{\HD}$. A complex structure on
$\Sigma$ induces a $\tau$-equivariant complex structure on
$\wt{\Sigma}$, which makes $\Sym^{2g}(\wt{\Sigma})$ into a smooth
complex manifold. The involution $\tau$ induces a smooth involution of
$\Sym^{2g}(\wt{\Sigma})$, by
\[
  \tau(\{x_1,\dots,x_{2g}\})=\{\tau(x_1),\dots,\tau(x_{2g})\}.
\]

The goal of this section is to prove:
\begin{proposition}\label{prop:trivialization} 
  Let $\HD=(\Sigma_g,\alphas,\betas,z,w)$ be a doubly-pointed Heegaard
  diagram for a nullhomologous knot $K$ in a closed 3-manifold
  $Y$ and let $\wt{\HD}=(\wt{\Sigma},\wt{\alphas},\wt{\betas},\wt{z},\wt{w})$
  be the branched double cover of $\HD$, which is a doubly-pointed
  Heegaard diagram for $(\Sigma(Y,K),\wt{K})$. Then there is a stable
  tangent-normal isomorphism
  \[
    \bigl(T\Sym^{2g}(\wt{\Sigma}\setminus\{\wt{z}\})^\fix,T\TT_{\wt{\alpha}}^\fix,T\TT_{\wt{\beta}}^\fix\bigr)\cong
    \bigl(N\Sym^{2g}(\wt{\Sigma}\setminus\{\wt{z}\})^\fix,N\TT_{\wt{\alpha}}^\fix,N\TT_{\wt{\beta}}^\fix\bigr).
  \]
\end{proposition}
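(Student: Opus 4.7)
My plan is to produce the required stable tangent–normal isomorphism by constructing an explicit candidate isomorphism on the generic stratum of the fixed set $F := \Sym^{2g}(\wt{\Sigma}\setminus\{\wt{z}\})^\fix$, extending it over the remaining strata by an obstruction-theoretic argument, and then matching the Lagrangian subbundles using the $K^1$/Chern-character framework of Section~\ref{sec:K1}.

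To identify $F$ and its local structure: a $\tau$-invariant $2g$-element subset of $\wt{\Sigma}\setminus\{\wt{z}\}$ decomposes into non-fixed pairs $\{x,\tau x\}$ together with copies of the branch point $\wt{w}$, so via $\pi$ we obtain a canonical identification $F\cong\Sym^g(\Sigma\setminus\{z\})$. Holomorphicity of $\tau$ yields a splitting $TM|_F = TF\oplus NF$ into the $\pm 1$ eigenbundles of $d\tau$ as complex subbundles. At a generic fixed configuration $p = \{x_1,\tau x_1,\ldots,x_g,\tau x_g\}$, the tangent space decomposes as $\bigoplus_i (T_{x_i}\wt{\Sigma}\oplus T_{\tau x_i}\wt{\Sigma})$ with $d\tau$ swapping the two summands in each pair, and the diagonal and anti-diagonal embeddings identify both $TF|_p$ and $NF|_p$ with $\bigoplus_i T_{x_i}\wt{\Sigma}$. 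Globalizing this observation produces a canonical stable isomorphism $TF|_U\cong NF|_U$ over the main stratum $U\subset F$.

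To extend this isomorphism over the branch strata (configurations containing $\wt{w}$ with positive multiplicity), I would use the local model $\Sym^k$ of a $\tau$-invariant neighborhood of $\wt{w}$ on which $d\tau_{\wt{w}}=-1$; in coordinates given by elementary symmetric polynomials $\tau$ acts diagonally with alternating signs, so that $TF$ and $NF$ can be matched directly over such a neighborhood. Gluing these local matches to the main-stratum identification is an obstruction problem, and by Proposition~\ref{prop:torsor} the set of stable isomorphisms $TF\cong NF$ forms a torsor over $\Iso(\ul{\CC}^0,\ul{\CC}^0)\cong K^1(F)$; after Chern character, gluing obstructions live in $H^{\mathrm{odd}}(F;\QQ)$, and the machinery of Section~\ref{sec:K1} is set up precisely to track this.

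The main obstacle I expect is the final step: verifying that the resulting stable isomorphism restricts over the Lagrangians $\TT^\fix_{\wt{\alpha}}, \TT^\fix_{\wt{\beta}}\subset U$ to a homotopy of Lagrangian subbundles. Because these fixed tori have nontrivial $H^1$, the torsor picture of Section~\ref{sec:K1} signals a genuine cohomological obstruction to Lagrangian compatibility, and adjusting the candidate isomorphism by suitable elements of $K^1(F)$ should eliminate it only under the nullhomologous hypothesis. As flagged in the introduction, the cohomology lemma used here requires a Seifert surface for $K$ to produce trivializations of certain auxiliary bundles over one-cycles in $\TT^\fix_{\wt{\alpha}}$ and $\TT^\fix_{\wt{\beta}}$, forcing the relevant Chern-character obstructions to vanish. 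This cohomological matching is the crux of the proof and is where the hypothesis $[K]=0\in H_1(Y)$ does its essential work.
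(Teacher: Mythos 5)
Your outline has the right architecture: the tangent--normal isomorphism is available away from the $\wt{w}$-divisor, the problem is extending it over the branch stratum while retaining Lagrangian compatibility, and the obstruction is tracked by the Chern character with the nullhomologous hypothesis entering at the end. However, the two places where the real work lives are asserted rather than proved. First, the ``generic stratum'' construction: the decomposition $T_p\Sym^{2g}(\wt{\Sigma})\cong\bigoplus_i\bigl(T_{x_i}\wt{\Sigma}\oplus T_{\tau x_i}\wt{\Sigma}\bigr)$ holds only at configurations of $2g$ distinct points, whereas the fixed locus also contains diagonal configurations (some $x_i=x_j$); moreover the diagonal/anti-diagonal identification $TF|_p\to NF|_p$ depends on the choice of lift in each pair $\{x_i,\tau x_i\}$ (the two choices differ by $-1$), and one must additionally check that the resulting map carries $T\TT_{\wt{\alpha}}^\fix$ to $N\TT_{\wt{\alpha}}^\fix$ up to Lagrangian homotopy. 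All of this is precisely the content of \cite[Proposition 10.2]{Large}, which the paper takes as input (the isomorphism $\Phi_1$ over $\Sym^{2g}(\wt{\Sigma}\setminus\{\wt{z},\wt{w}\})^\fix$, already respecting the Lagrangian subbundles) rather than reproving; without citing it, your first two paragraphs sketch a hard lemma rather than prove one.

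Second, the step you correctly flag as the crux is exactly Lemma~\ref{lemma:works-for-cohomology}, and your description of it is not an argument. What is actually needed is this: with $V$ a tubular neighborhood of $\{w\}\times\Sym^{g-1}(\Sigma\setminus\{z\})$ and $a=\chern[\Phi_2^{-1}\circ\Phi_1]$, where $\Phi_2$ is a stable isomorphism of the ambient bundles defined over all of $\Sym^g(\Sigma\setminus\{z\})$ (obtainable by restricting $\Phi_1$ over the image of $\Sym^g(\Sigma\setminus E)$ for a disk $E$ containing $z$ and $w$, hence with no Lagrangian control), one must produce a single class $b$ that simultaneously restricts to zero in $H^*(\TT_\alpha)\oplus H^*(\TT_\beta)$ and makes $(a+b)|_{\bdy V}$ extend over $V$. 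The Seifert surface enters by supplying the restriction $c$ of $\PD[F]$ to $H^1(\Sigma\setminus\{z,w\})$, which evaluates to $1$ on a meridian of $w$ and to $0$ on every $\alpha_i$ and $\beta_i$; one then uses the splitting $H^*(\bdy V)\cong H^*(V)\oplus c\cup H^*(V)$ to take $b=-c\cup\wt{a}_2$. Your phrase about ``trivializations of auxiliary bundles over one-cycles'' does not produce this class or the attendant diagram chase. A smaller but genuine point: to convert cohomological statements back into homotopies of stable isomorphisms you need the Chern character $\Iso(E,E)\to H^{\mathrm{odd}}$ to be an \emph{integral} isomorphism over these spaces (Corollary~\ref{corollary:integral-iso}, which relies on Lemma~\ref{lem:sym-is-torus} identifying the symmetric products with torus skeleta); the group $H^{\mathrm{odd}}(F;\QQ)$ you invoke is not sufficient.
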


We start by noting that the fixed set of the involution is familiar:
\begin{lemma}\label{lem:fixed-set}
  There is a $\tau$-equivariant K\"ahler form on
  $\Sym^{2g}(\wt{\Sigma}\setminus\{\wt{z}\})$ and a K\"ahler form on
  $\Sym^g(\Sigma\setminus\{z\})$, so that the fixed set
  $\Sym^{2g}(\wt{\Sigma}\setminus\{\wt{z}\})^\fix$ is symplectomorphic to
  $\Sym^g(\Sigma\setminus\{z\})$, and the symplectomorphism takes the
  fixed sets $(\TT_{\wt{\alpha}}^\fix,\TT_{\wt{\beta}}^\fix)$ of the
  Lagrangian tori to the Lagrangian tori $\TT_\alpha$ and $\TT_\beta$.
\end{lemma}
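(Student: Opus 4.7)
The plan is to construct an explicit biholomorphism
\[
\phi \co \Sym^g(\Sigma) \longrightarrow \Sym^{2g}(\wt{\Sigma})^\fix
\]
from the branched cover $\pi$ and then transport K\"ahler data along it. First I would choose a $\tau$-equivariant K\"ahler form $\omega$ on $\wt{\Sigma}$ (obtained by averaging any K\"ahler form, using that $\tau$ is holomorphic), and let $\Omega$ be the induced $\tau$-equivariant K\"ahler form on the smooth complex manifold $\Sym^{2g}(\wt{\Sigma})$ via any standard construction (e.g.\ the Perutz form). Since the induced involution on $\Sym^{2g}(\wt{\Sigma})$ is holomorphic and preserves $\Omega$, its fixed set is automatically a K\"ahler submanifold.

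Next I would define
\[
\phi(\{p_1, \ldots, p_g\}) = \pi^{-1}(p_1) \cup \cdots \cup \pi^{-1}(p_g),
\]
counted with multiplicity (so $\pi^{-1}(w)$ contributes $\wt{w}$ with multiplicity $2$). The image is tautologically $\tau$-invariant, and it is a bijection onto the fixed set in $\Sym^{2g}(\wt{\Sigma} \setminus \{\wt{z}\})$: any $\tau$-invariant tuple decomposes uniquely into $\tau$-orbits, of which the only size-one orbits come from $\wt{w}$ (since $\wt{z}$ has been removed), and $\wt{w}$ must appear with even multiplicity because the total cardinality $2g$ is even.

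\emph{The main step} is to check that $\phi$ is a local biholomorphism. Away from $\wt{w}$ this is immediate, as locally $\phi$ has the form $p \mapsto \{q(p), \tau(q(p))\}$ for a holomorphic section $q$ of $\pi$. Near a point $\{\wt{w}^{2m}\}$ of $\Sym^{2m}(\wt{\Sigma})$, pick a local coordinate $\wt{u}$ on $\wt{\Sigma}$ with $\tau(\wt{u}) = -\wt{u}$, so $u = \wt{u}^2$ is a coordinate on $\Sigma$ near $w$; the elementary symmetric functions $\sigma_j$ in $\wt{u}_1, \ldots, \wt{u}_{2m}$ give local coordinates on $\Sym^{2m}$, and the involution acts by $\sigma_j \mapsto (-1)^j \sigma_j$, cutting out the fixed locus $\{\sigma_j = 0 : j \text{ odd}\}$. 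The map $\phi$ sends $(u_1, \ldots, u_m)$ to the tuple $\{\wt{u}_i, -\wt{u}_i\}_{i=1}^m$, whose even symmetric functions are $\sigma_{2l} = (-1)^l \tilde\sigma_l(u_1, \ldots, u_m)$ --- a linear change of coordinates onto the fixed set. Product neighborhoods dispatch the general case. Thus $\phi$ is a global biholomorphism, and I would take the K\"ahler form on $\Sym^g(\Sigma)$ to be $\phi^*(\Omega|_{\Sym^{2g}(\wt{\Sigma})^\fix})$.

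Finally, I would check that the Lagrangian tori correspond. Each $\alpha_i$ avoids both $z$ and $w$, so $\pi^{-1}(\alpha_i) \subset \wt{\Sigma} \setminus \{\wt{z}, \wt{w}\}$ is an unbranched double cover of $\alpha_i$: either one circle double-covering $\alpha_i$ (sheets swapped by $\tau$), or two circles exchanged by $\tau$. In either case, a $\tau$-invariant pair of points on $\pi^{-1}(\alpha_i)$ is of the form $\{q, \tau(q)\}$ and projects to a unique point of $\alpha_i$. Iterating over all $i$ shows $\phi^{-1}(\TT_{\wt{\alpha}}^\fix) = \TT_\alpha$, and the analogous argument handles $\TT_\beta$. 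Restricting to the open subsets obtained by removing $z$ and $\wt{z}$ then proves the lemma. The chief obstacle is the local biholomorphism check at tuples involving $\wt{w}$; it is concrete but requires the symmetric-coordinate bookkeeping sketched above.
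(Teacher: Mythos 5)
Your argument is correct and is essentially the argument the paper relies on: the paper's ``proof'' of this lemma is a citation to Hendricks's earlier work (Section~4 and Appendix~A of~\cite{Hendricks12:dcov-localization}), which identifies the fixed locus with the symmetric product of the base via exactly the local model in elementary symmetric functions at the branch point that you describe, and transports the (Perutz-type, $\tau$-averaged) K\"ahler form along that biholomorphism. The only cosmetic point is that in the present setting each $\alpha_i$ bounds a compressing disk in $Y\setminus\nbd(K)$, so its preimage is always two circles exchanged by $\tau$; the single-circle case you allow for does not occur.
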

\begin{proof}
  The proof is the same as the analogous result for branched double
  covers of genus $0$ multi-pointed Heegaard diagrams for links in
  $S^3$~\cite[Section 4 and Appendix A]{Hendricks12:dcov-localization}.
\end{proof}

\begin{lemma}\label{lem:sym-is-torus}
  Let $\bigvee_{i=1}^k S^1_i$ be a bouquet of circles. Choose
  coordinates on each $S^1_i$ such that the wedge point is $1\in S^1\subset\CC$. Then $\Sym^r(\bigvee_{i=1}^k S^1_i)$ deformation retracts onto its subspace
  \[
    \Bigl\{(z_1,\dots,z_k) \in \prod_{i=1}^k S^1_i \mid \text{at most } r\text{ coordinates satisfy } z_i \neq 1\Bigr\}.
  \]
  In particular, if $r\geq k$, $\Sym^r(\bigvee_{i=1}^k S^1_i)$ is
  homotopy equivalent to the
  $k$-torus $\prod_{i=1}^k S^1_i$, while if $r < k$, then
  $\Sym^r(\bigvee_{i=1}^k S^1_i)$ is homotopy equivalent to the $r$-skeleton of the $k$-torus
  $\prod_{i=1}^k S^1_i$ with respect to the standard product CW decomposition of the
  torus.
\end{lemma}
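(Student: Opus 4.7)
The plan is to construct an explicit deformation retraction, realized by the group multiplication on the torus $T^k = \prod_{i=1}^k S^1_i$.

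First, I would identify $\bigvee_{i=1}^k S^1_i$ with the $1$-skeleton of $T^k$ via the inclusion $\phi$ sending a point $p \in S^1_i$ to the tuple whose only possibly non-identity coordinate is the $i$-th. Under this identification the subspace described in the lemma is exactly the $r$-skeleton $X_r$ of $T^k$ in the standard product CW structure, and it embeds in $\Sym^r(\bigvee_i S^1_i)$ as the set of multisets containing at most one point on each circle away from the wedge (padded by copies of the wedge). Using the group operation on $T^k$, define
\[
\mu \co \Sym^r\bigl(\textstyle\bigvee_{i=1}^k S^1_i\bigr) \to T^k, \qquad \mu(\{p_1, \ldots, p_r\}) = \phi(p_1) \cdots \phi(p_r).
\]
The image lies in $X_r$ since each $\phi(p_j)$ has at most one non-identity coordinate. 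Composing with the inclusion $\iota \co X_r \into \Sym^r(\bigvee_i S^1_i)$ just described gives $\mu \circ \iota = \id_{X_r}$.

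The core of the proof is to construct a deformation retraction $H \co \Sym^r(\bigvee_i S^1_i) \times [0,1] \to \Sym^r(\bigvee_i S^1_i)$ from the identity to $\iota \circ \mu$. The idea is to concentrate, for each circle $S^1_i$ and each configuration $M$, all points of $M$ lying on $S^1_i \setminus \{\text{wedge}\}$ into the single point $\tau_i = \prod_{p_j \in S^1_i} \phi(p_j) \in S^1_i$, while pushing the remaining points on $S^1_i$ to the wedge. The building block is the classical fact that for each $n \geq 1$ the sum map $\Sym^n(S^1) \to S^1$ is a homotopy equivalence; concretely, one has a deformation retraction $h^{(n)} \co \Sym^n(S^1) \times [0,1] \to \Sym^n(S^1)$ from the identity to the section $\tau \mapsto \{\tau, *, \ldots, *\}$ with $n-1$ basepoints $*$. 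Applying $h^{(n_i)}$ circle-by-circle gives the candidate homotopy, where $n_i$ is the number of points of $M$ on $S^1_i$ away from the wedge.

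The main obstacle is verifying that this combined homotopy is continuous on all of $\Sym^r(\bigvee_i S^1_i)$: the integers $n_i$ are not continuous functions of $M$, since a point can pass through the wedge from one circle to another. The remedy is to choose the family of deformation retractions $h^{(n)}$ compatibly in $n$, in the sense that $h^{(n)}_s$ applied to a configuration containing the basepoint equals $h^{(n-1)}_s$ on the configuration with that basepoint removed, together with a fixed copy of the basepoint. Since the basepoint is the identity of $S^1$ and contributes trivially to $\tau$, this compatibility is consistent with the endpoints $\iota \circ \mu$, and a compatible sequence $(h^{(n)})_{n \geq 1}$ can be constructed inductively on $n$, starting from the constant homotopy on $\Sym^1(S^1) = S^1$. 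With this compatibility, the circle-by-circle homotopies glue into a continuous deformation retraction on $\Sym^r(\bigvee_i S^1_i)$, proving the first claim of the lemma.

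The ``in particular'' statements then follow by inspecting the subspace $X_r$: when $r \geq k$, every tuple in $T^k$ has at most $k \leq r$ non-identity coordinates, so $X_r = T^k$; when $r < k$, $X_r$ is by construction the $r$-skeleton of $T^k$ under the standard product CW structure, in which each cell $\sigma \subseteq \{1, \ldots, k\}$ has dimension $|\sigma|$.
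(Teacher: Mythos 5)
Your proof is correct and takes essentially the same route as the paper: the paper's (two-line) proof cites Hatcher for the homotopy equivalence $\Sym^r(S^1)\to S^1$ given by multiplication and cites Ong's work on symmetric products of bouquets (see also Lemma 5.1 of Hendricks's earlier paper) for precisely the circle-by-circle gluing that you carry out explicitly. The one step you assert rather than construct --- the inductive existence of the compatible family $(h^{(n)})_{n\ge 1}$ --- is exactly the content delegated to the cited reference, and it does go through (e.g.\ by extending over $\Sym^n(S^1)\times[0,1]$ the map prescribed on $\Sym^n(S^1)\times\{0,1\}\cup B_n\times[0,1]$, where $B_n$ is the image of the cofibration $M\mapsto M\cup\{*\}$; the obstructions vanish because $\Sym^n(S^1)$ is aspherical and $B_n\hookrightarrow\Sym^n(S^1)$ is a homotopy equivalence), so there is no genuine gap.
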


\begin{proof}
  The map $\Sym^r(S^1) \rightarrow S^1$ given by multiplication
  $\{z_1,\dots, z_r\} \mapsto z_1\cdots z_r$ is a homotopy equivalence
  (see, e.g., the proof of~\cite[Lemma 5.1]{Hendricks12:dcov-localization} or, for
  the essence of the argument,~\cite[Example 4K.4]{Hatcher02:book}). Work of Ong~\cite{Ong:bouquets} (see also~\cite[Lemma 5.1]{Hendricks12:dcov-localization}) shows that this map can be used to construct the desired deformation retract from $\Sym^r(\bigvee_{i=1}^k S^1_i)$ to the $r$-skeleton of the torus.
\end{proof}

\begin{corollary} \label{corollary:integral-iso}
  Given a complex vector bundle $E\to \Sym^g(\Sigma\setminus\{z\})$,
  the Chern character map
  $\chern\co \Iso(E,E)\to H^{\mathrm{odd}}(\Sym^g(\Sigma\setminus \{z\});\QQ)$
  (Section~\ref{sec:K1}) is injective with image
  $H^{\mathrm{odd}}(\Sym^g(\Sigma\setminus \{z\});\ZZ)$ and hence
  induces an isomorphism $\chern\co \Iso(E,E)\to H^{\mathrm{odd}}(\Sym^g(\Sigma\setminus \{z\});\ZZ)$.
\end{corollary}
\begin{proof}
  If $X$ is a wedge sum of spheres then the
  Chern character map is an isomorphism $K^0(X)\to
  H^{\mathrm{even}}(X)$~\cite[pp. 212]{May:concise}. So, since the Chern character map under
  consideration is induced from the usual Chern character map on the
  suspension of $X$, the result follows from
  Lemma~\ref{lem:sym-is-torus} and the fact that the suspension of a
  skeleton of a torus is a wedge sum of spheres.
\end{proof}

Given a doubly-pointed Heegaard diagram $(\Sigma,\alphas,\betas,z,w)$
for a nullhomologous knot $K$, with branched double cover diagram
$(\wt{\Sigma},\wt{\alphas},\wt{\betas},\wt{z},\wt{w})$,
Large~\cite[Proposition 10.2]{Large} constructed a stable tangent-normal
isomorphism
\begin{equation}
  \label{eq:Large-tan-norm}
  \Phi_1\co (T\Sym^{2g}(\wt{\Sigma}\setminus\{\wt{z},\wt{w}\})^\fix,
  T\TT_{\wt{\alpha}}^\fix, T\TT_{\wt{\beta}}^\fix)\stackrel{\cong}{\lra}
  (N\Sym^{2g}(\wt{\Sigma}\setminus\{\wt{z},\wt{w}\})^\fix,N\TT_{\wt{\alpha}}^\fix, N\TT_{\wt{\beta}}^\fix).
\end{equation}
Eventually, we will modify $\Phi_1$ so that it extends over
$\{w\}\times\Sym^{g-1}(\Sigma)$, without changing $\Phi_1$ on
$\TT_{\wt{\alpha}}^\fix$ and $\TT_{\wt{\beta}}^\fix$ (up to homotopy). As
a first step we have:

\begin{lemma} \label{lemma:bigiso}
  There is a stable isomorphism of complex vector bundles
  \[
    \Phi_2 \co T\Sym^{2g}(\wt{\Sigma}\setminus\{\wt{z}\})^\fix\stackrel{\cong}{\longrightarrow}
    N\Sym^{2g}(\wt{\Sigma}\setminus\{\wt{z}\})^\fix.
  \]
\end{lemma}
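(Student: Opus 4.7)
The plan is to extend Large's stable tangent--normal isomorphism~\eqref{eq:Large-tan-norm}, which is defined only after removing the codimension-two locus associated to the extra basepoint $\wt w$, over the entire fixed set. Rather than constructing an explicit extension of $\Phi_1$, I would prove the equivalent statement that $[T^\fix]=[N^\fix]$ in $K^0(\Sym^g(\Sigma\setminus\{z\}))$; since Lemma~\ref{lem:sym-is-torus} shows this space is homotopy equivalent to a finite CW complex, this $K$-theoretic equality suffices to produce a stable isomorphism $\Phi_2$ of complex vector bundles.

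First, I would apply Lemma~\ref{lem:fixed-set} together with its evident two-puncture analog (proved by the same argument) to identify
\[
\Sym^{2g}(\wt\Sigma\setminus\{\wt z\})^\fix \cong \Sym^g(\Sigma\setminus\{z\})
\quad\text{and}\quad
\Sym^{2g}(\wt\Sigma\setminus\{\wt z,\wt w\})^\fix \cong \Sym^g(\Sigma\setminus\{z,w\}),
\]
compatibly with the map induced by the open inclusion $\iota\co \Sigma\setminus\{z,w\}\into\Sigma\setminus\{z\}$. Since the $\tau$-equivariant complex structure on $\wt\Sigma$ makes $\tau$ act holomorphically on $\Sym^{2g}(\wt\Sigma)$, the fixed set is a complex submanifold, and both $T^\fix$ and $N^\fix$ are complex vector bundles on all of $\Sym^g(\Sigma\setminus\{z\})$. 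Their restrictions to $\Sym^g(\Sigma\setminus\{z,w\})$ are precisely the bundles on which $\Phi_1$ exhibits a stable isomorphism.

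The key observation is that $\iota$ admits a homotopy right inverse: choose a deformation retract of $\Sigma\setminus\{z\}$ onto an embedded wedge of $2g$ circles disjoint from $w$, and compose with the open inclusion to obtain a map $s\co \Sigma\setminus\{z\}\to\Sigma\setminus\{z,w\}$ with $\iota\circ s\simeq \id_{\Sigma\setminus\{z\}}$. Because the symmetric product functor preserves homotopies, $\Sym^g(\iota)\circ\Sym^g(s)\simeq \id$, so the pullback
\[
\Sym^g(\iota)^*\co K^0(\Sym^g(\Sigma\setminus\{z\}))\lra K^0(\Sym^g(\Sigma\setminus\{z,w\}))
\]
is split injective, with left inverse $\Sym^g(s)^*$.

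Finally, since $\Phi_1$ implies $\Sym^g(\iota)^*[T^\fix]=\Sym^g(\iota)^*[N^\fix]$, the injectivity above yields $[T^\fix]=[N^\fix]$ in $K^0(\Sym^g(\Sigma\setminus\{z\}))$, producing the stable isomorphism $\Phi_2$. The main technical step --- beyond the preparatory identifications --- is establishing the homotopy right inverse $s$; this is what makes the codimension-two removal invisible to $K$-theoretic invariants and allows us to dispense with the extra basepoint $\wt w$ without having to explicitly extend Large's $\Phi_1$ over the divisor.
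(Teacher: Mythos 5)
Your proof is correct and is essentially the paper's argument: the paper also produces $\Phi_2$ by deformation retracting $\Sigma\setminus\{z\}$ onto a subspace missing $w$ (the complement of a disk containing both basepoints), restricting $\Phi_1$ to the image of its symmetric product in the fixed locus, and extending back via the homotopy equivalence. Your $K^0$ packaging of this step (split injectivity of $\Sym^g(\iota)^*$ from the homotopy right inverse, then converting equality of classes over a finite-CW-type base into a stable isomorphism) is a cosmetic variant of the same idea, and like the paper's version it correctly gives no control of $\Phi_2$ on the tori.
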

\begin{proof}
  Let $E$ be a disk in $\Sigma$ containing $z$ and $w$, so
  that $\Sigma \setminus E$ is a deformation retract of $\Sigma
  \setminus \{z\}$. Let $Y$ be the image of $\Sym^g(\Sigma \setminus
  E)$ in $\Sym^{2g}(\wt{\Sigma}\setminus \{\wt{z},\wt{w}\})^\fix$. Large's
  isomorphism $\Phi_1$
  restricts to an isomorphism $TY \simeq NY$. Since $Y$ is a deformation
  retract of $\wt{\Sigma}\setminus\{\wt{z}\}$, this implies the existence of
  the isomorphism $\Phi_2$.
\end{proof}
Note that, in the proof of Lemma~\ref{lemma:bigiso}, since $E$ may
intersect the $\alpha$- and $\beta$-curves, we have no control over
$\Phi_2$ on $T\TT_{\wt{\alpha}}^\fix$ and $T\TT_{\wt{\beta}}^\fix$.

\begin{remark}
  One can alternately prove Lemma~\ref{lemma:bigiso} by using Macdonald's computation of the Chern classes
  of symmetric products of surfaces~\cite{Macdonald:symmetric}, along
  with the fact that over spaces with torsion-free cohomology the
  Chern classes of a vector bundle determine its stable isomorphism
  class.
\end{remark}

\begin{lemma}\label{lemma:works-for-cohomology}
  Let $V$ be a closed tubular neighborhood of $\{w\}\times
  \Sym^{g-1}(\Sigma\setminus\{z\})\subset \Sym^g(\Sigma\setminus\{z\})$.
  Consider the commutative diagram
  \[
    \xymatrix{
      G\ar[dr] & & \\
      H^*(\Sym^g(\Sigma\setminus\{z\}))\ar[r]\ar[d] &
      H^*(\Sym^g(\Sigma\setminus\{z,w\}))\ar[dr]\ar[d] &
      \\
      H^*(V)\ar[r] & H^*(\bdy V) & H^*(\TT_\alpha)\oplus H^*(\TT_\beta)
    }
  \]
  where $G$ is the kernel of the map
  $H^*(\Sym^g(\Sigma\setminus\{z,w\}))\to H^*(\TT_\alpha)\oplus H^*(\TT_\beta)$ (so the diagonal line is exact).
  Given any class $a\in H^*(\Sym^g(\Sigma\setminus\{z,w\}))$ there is
  a class $b\in G$ so that the image of $a+b$ in $H^*(\bdy V)$ is in
  the image of $H^*(V)$.
\end{lemma}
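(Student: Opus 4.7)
The plan is to translate the extension condition into a lifting problem via Mayer--Vietoris and then to compute the relevant cohomology rings explicitly. Set $X \coloneqq \Sym^g(\Sigma\setminus\{z\})$ and $X' \coloneqq \Sym^g(\Sigma\setminus\{z,w\})$. The Mayer--Vietoris sequence associated to the cover $X = V \cup X'$ (with $V \cap X'$ homotopy equivalent to $\bdy V$) contains
\[
H^k(X) \xrightarrow{\;(r_V,\,r_{X'})\;} H^k(V) \oplus H^k(X') \longrightarrow H^k(\bdy V),
\]
so $a|_{\bdy V}$ lies in the image of $H^k(V) \to H^k(\bdy V)$ if and only if $a$ lies in the image of the restriction $r_{X'}\co H^k(X) \to H^k(X')$. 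The lemma therefore reduces to the algebraic statement
\[
H^k(X') = \mathrm{image}(r_{X'}) + G.
\]

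To check this, I would compute the cohomology of $X$ and $X'$ using Lemma~\ref{lem:sym-is-torus}. Since $\Sigma\setminus\{z\}$ deformation retracts onto $\bigvee^{2g} S^1$ and $\Sigma\setminus\{z,w\}$ onto $\bigvee^{2g+1} S^1$, that lemma identifies $X$ and $X'$ (up to homotopy) with the $g$-skeleta of $T^{2g}$ and $T^{2g+1}$, respectively. In particular, both cohomology rings agree with those of the corresponding tori through degree $g$ and vanish above; concretely,
\[
H^*(X) \cong \Lambda^*\langle \alpha^*_1,\dots,\alpha^*_g,\beta^*_1,\dots,\beta^*_g\rangle, \quad H^*(X') \cong \Lambda^*\langle \alpha^*_1,\dots,\alpha^*_g,\beta^*_1,\dots,\beta^*_g,\gamma^*\rangle,
\]
where $\alpha^*_i,\beta^*_j,\gamma^*$ are dual to the classes of $[\alpha_i],[\beta_j]$ and to a small meridional loop $[\gamma]$ around $w$ in $H_1(\Sigma\setminus\{z,w\})$. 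The restriction $r_{X'}$ is then the inclusion of exterior subalgebras, and $H^*(X') = \mathrm{image}(r_{X'}) \oplus \gamma^* \wedge \Lambda^{*-1}\langle \alpha^*,\beta^*\rangle$.

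The crux, and the step at which the nullhomologous hypothesis enters, is the claim that $\gamma^*|_{\TT_\alpha} = \gamma^*|_{\TT_\beta} = 0$. The $\alpha$- and $\beta$-curves avoid $w$, so their classes in $H_1(\Sigma\setminus\{z,w\})$ have no $[\gamma]$-component; a Seifert surface for $K$ furnishes a geometric cocycle representative of $\gamma^*$ supported away from the $\alpha$- and $\beta$-curves, realizing this vanishing in a Heegaard-diagram-compatible way. Since $H^*(X') \to H^*(\TT_\alpha) \oplus H^*(\TT_\beta)$ is a ring homomorphism, every class of the form $\gamma^* \wedge \omega$ restricts to zero on each torus and hence lies in $G$. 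Thus every $a \in H^k(X')$ decomposes as $r_{X'}(\tilde a) + b_0$ with $\tilde a \in H^k(X)$ and $b_0 \in G$; setting $b \coloneqq -b_0$, the class $a + b = r_{X'}(\tilde a)$ extends across $V$ as the restriction of $\tilde a$. The principal obstacle I expect is justifying the cohomology computation via Lemma~\ref{lem:sym-is-torus}, in particular the passage from symmetric products of wedges of circles to symmetric products of the punctured surface; once the ring descriptions are in hand, the decomposition argument is purely formal.
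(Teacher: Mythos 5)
Your argument is correct and reaches the same conclusion by a genuinely different route in its second half. Both proofs hinge on the same key input: the class you call $\gamma^*$ (the restriction of $\PD[F]$ to $\Sigma\setminus\{z,w\}$, denoted $c$ in the paper), which evaluates to $1$ on the meridian of $w$ and to $0$ on every $\alpha_i$ and $\beta_j$ precisely because $L$ is nullhomologous. Where the paper proceeds by computing $H^*(\bdy V)$ directly --- using Macdonald's formula to see that the normal bundle of $\{w\}\times\Sym^{g-1}(\Sigma\setminus\{z\})$ is trivial, so $\bdy V\simeq \Sym^{g-1}(\Sigma\setminus\{z\})\times S^1$, and then chasing the resulting diagram to produce $b=-c\cup\wt{a}_2$ --- you instead use Mayer--Vietoris for $\Sym^g(\Sigma\setminus\{z\})=V\cup\Sym^g(\Sigma\setminus\{z,w\})$ to convert ``extends over $V$'' into ``lies in the image of restriction from $\Sym^g(\Sigma\setminus\{z\})$,'' and then verify the purely algebraic identity $H^*(X')=\operatorname{im}(r_{X'})+G$ using the truncated-exterior-algebra structure from Lemma~\ref{lem:sym-is-torus}. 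This buys you two things: you never need the normal bundle computation or any model for $\bdy V$, and you identify exactly which classes extend (the image of $r_{X'}$) rather than just exhibiting a correction term. The Mayer--Vietoris equivalence is valid in both directions by exactness, and the step you flag as the ``principal obstacle'' --- passing from symmetric products of wedges of circles to symmetric products of the punctured surface --- is exactly how the paper itself uses Lemma~\ref{lem:sym-is-torus}, so it is not a real obstruction.

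One imprecision worth fixing: the classes $[\alpha_1],\dots,[\alpha_g],[\beta_1],\dots,[\beta_g]$ need not form a basis of $H_1(\Sigma\setminus\{z\})\cong\ZZ^{2g}$ (they fail to even be linearly independent when $b_1(Y)>0$, e.g.\ for the standard diagram of $\#^g(S^1\times S^2)$), so there is in general no ``dual basis'' $\alpha_i^*,\beta_j^*$. Your argument does not actually need one: all that is used is that $H^1(X')=r_{X'}(H^1(X))\oplus\ZZ\gamma^*$ --- which holds because $r_{X'}(H^1(X))$ is the annihilator of $[\gamma]$ and $\gamma^*([\gamma])=1$ --- together with the facts that both rings are generated in degree one and that $\gamma^*$ restricts to zero on $\TT_\alpha$ and $\TT_\beta$. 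Phrased that way, the decomposition $H^k(X')=\Lambda^k\bigl(r_{X'}H^1(X)\bigr)\oplus\bigl(\gamma^*\wedge\Lambda^{k-1}r_{X'}H^1(X)\bigr)$ and the rest of your argument go through verbatim.
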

\begin{proof}
  Let $\gamma\subset \Sigma\setminus\{z,w\}$ be a small circle around
  $w$.  Since $K$ is nullhomologous there is a class
  $c\in H^1(\Sigma\setminus\{z,w\})$ so that
  $c([\alpha_i])=c([\beta_i])=0$ for all $i$ and
  $c([\gamma])=1$. Specifically, since $K$ is nullhomologous, $K$
  bounds a Seifert surface $F$ in $Y$. The Poincar\'e-Lefschetz dual
  $\PD([F])\in H^1(Y\setminus K)$ evaluates to $1$ on a meridian
  of $K$. Since each $\alpha_i$ and $\beta_i$ is nullhomologous in
  $Y\setminus K$ (they bound disks), $\PD([F])$ evaluates to $0$ on
  $[\alpha_i]$ and $[\beta_i]$. Hence, the image of $\PD[F]$ in
  $H^1(\Sigma\setminus\{z,w\})$ is the desired class $c$.

  Projection to $\{w\}\times\Sym^{g-1}(\Sigma\setminus\{z\})$
  gives a homotopy equivalence
  $V\simeq \Sym^{g-1}(\Sigma\setminus\{z\})$. Further, since the restriction of 
  $c_1(T\Sym^g(\Sigma\setminus\{z\}))$ to $\{w\}\times\Sym^{g-1}(\Sigma\setminus\{z\})$ is exactly $c_1(T\Sym^{g-1}(\Sigma\setminus\{z\}))$~\cite[Formula
  14.5]{Macdonald:symmetric}, the normal bundle to
  $\{w\}\times\Sym^{g-1}(\Sigma\setminus\{z\})$ is trivial so
  $\bdy V\cong \Sym^{g-1}(\Sigma\setminus\{z\})\times
  S^1$. (The
  restriction of the cohomology class $\eta\in H^2$ appearing in
  MacDonald's formula to the symmetric product of
  $\Sigma\setminus\{z\}$ vanishes.) From Lemma~\ref{lem:sym-is-torus},
  the cohomology $H^i(\Sym^{g-1}(\Sigma\setminus\{z,w\}))$ vanishes
  for $i>g-1$ and the inclusion map
  $\Sym^{g-1}(\Sigma\setminus\{z,w\}))\into
  \Sym^{g}(\Sigma\setminus\{z,w\}))$ induces an isomorphism on $H^i$
  for $i\leq g-1$. By a small abuse of notation, let $c$ denote the image of the class $c \in H^1(\Sigma\setminus\{z,w\})$ in $H^1(\Sym^{n}(\Sigma\setminus\{z,w\}))$ under this string of isomorphisms for any $n$. We thus have a diagram 
  \[
    \begin{tikzpicture}
      \node at (0,0) (tl)
      {$H^*(\Sym^{g-1}(\Sigma\setminus\{z,w\}))^{\oplus 2}$};
      \node at (7,0) (tr) {$H^*(\Sym^{g}(\Sigma\setminus\{z,w\}))^{\oplus 2}$};
      \node at (0,-1.5) (cl) {$H^*(\Sym^{g-1}(\Sigma\setminus\{z\}))^{\oplus 2}$};
      \node at (7,-1.5) (cr) {$H^*(\Sym^g(\Sigma\setminus\{z,w\}))$};
      \node at (0,-3) (bl) {$H^*(V)^{\oplus 2}$};
      \node at (7,-3) (br) {$H^*(\bdy V)$};
      \draw[->] (cl) to node[left]{\lab{\cong}} (bl);
      \draw[->] (cr) to (br);
      \draw[->] (tr) to node[right]{\lab{(x,y)\mapsto x+c\cup y}} (cr);
      \draw[->] (bl) to node[below]{\lab{(x,y)\mapsto i^*x+c\cup i^*y}} node[above]{\lab{\cong}} (br);
      \draw[->] (cl) to node[left]{\lab{i^*}} (tl);
      \draw[->] (tr) to node[above]{\lab{\approx}} (tl);
    \end{tikzpicture}
  \]
  where the map labeled $\approx$ is an isomorphism for $*<g$ (and the
  target vanishes for $*\geq g$), and the maps $i^*$ are induced by
  the inclusion $\bdy V\into V$ and
  $\Sym^{g-1}(\Sigma\setminus\{z,w\})\into
  \Sym^{g-1}(\Sigma\setminus\{z\})$. 
  

  We claim that if we invert the arrow labeled $\approx$ in the degrees where it is an isomorphism, the diagram commutes. This is a consequence of Lemma~\ref{lem:sym-is-torus}, as follows. Let $S^1_1, \dots, S^1_{2g}$ be a collection of $2g$ circles in $\Sigma \setminus \{z\}$ such that the punctured surface $\Sigma \setminus \{z\}$ deformation retracts onto $\bigvee_{i=1}^{2g} S^1_i$, the surface $\Sigma \setminus \{z,w\}$ deformation retracts onto $(\bigvee_{i=1}^{2g} S^1_i)\vee\gamma$, and the inclusion map $\Sigma \setminus \{z,w\} \hookrightarrow \Sigma \setminus \{z\}$ goes by filling in a disk $D_{\gamma}$ containing $w$ whose boundary is $\gamma$.
  
Lemma~\ref{lem:sym-is-torus} shows that $\Sym^{g}(\Sigma \setminus
\{z\})$ deformation retracts onto the $g$-skeleton
\[
  \Bigl\{(z_1,\dots,z_{2g}) \in  S^1_1\times\cdots\times S^1_{2g} \mid \text{at most } r\text{ coordinates satisfy } z_i \neq 1\Bigr\}
\]
of $\prod_{i=1}^{2g} S^1_i$. However, for this argument we wish to apply a milder deformation retraction, starting with the fact that $\Sigma \setminus \{z\}$ deformation retracts onto $\left(\bigvee_{i=1}^{2g} S^1_i \right)\vee D_{\gamma}$. The same argument as Lemma~\ref{lem:sym-is-torus} shows that
$\Sym^g\left(\left(\bigvee_{i=1}^{2g} S^1_i \right)\vee D_{\gamma}\right)$
deformation retracts onto 
\[
  \Bigl\{(z_1,\dots,z_{2g+1}) \in S^1_1\times\cdots\times S^1_{2g} \times D_{\gamma} \mid \text{at most } r\text{ coordinates satisfy } z_i \neq 1\Bigr\}.
\]
This deformation retraction takes the subspace $\Sym^{g}(\Sigma \setminus \{z,w\})$ onto 
\[
  \Bigl\{(z_1,\dots,z_{2g+1}) \in S^1_1\times\cdots\times S^1_{2g} \times (D_\gamma \setminus \{w\}) \mid \text{at most } r\text{ coordinates satisfy } z_i \neq 1\Bigr\}
\]
which itself deformation retracts onto the $g$-skeleton of $\left(\prod_{i=1}^{2g} S^1_i\right) \times \gamma$. Furthermore, it carries $V$ to the product of the $g-1$ skeleton of $\prod_{i=1}^{2g} S^1_i$ with $D_{\gamma}$, and $\partial V$ onto the product of the $g-1$ skeleton of $\prod_{i=1}^{2g} S^1_i$ with $\gamma$. It is now simple to see from this description of the spaces in terms of tori that the diagram above commutes.

  Write the image of $a$ in $H^*(\bdy V)$ as $i^*a_1+c\cup i^*a_2$ for
  some $a_1,a_2\in H^*(V)$. Let
  $a_i'\in H^*(\Sym^{g-1}(\Sigma\setminus\{z\}))$ be a preimage of
  $a_i$ under the isomorphism. Since the top horizontal map is an
  isomorphism whenever $H^*(\Sym^{g-1}(\Sigma\setminus\{z,w\}))$ (or equivalently $H^*(V)$) is
  non-zero, there are elements
  $\wt{a}_i\in H^*(\Sym^g(\Sigma\setminus\{z,w\}))$ mapping to $i^*a'_i$.
  Take $b=-c\cup \wt{a}_2\in H^*(\Sym^g(\Sigma\setminus\{z,w\}))$.
  Since $c|_{\alpha_i}$ and $c|_{\beta_i}$ vanish, $b$ lies in the
  kernel $G$. The image of
  $a+b\in H^*(\Sym^g(\Sigma\setminus\{z,w\}))$ in $H^*(\bdy V)$ is the
  same as the image of $a_1\in H^*(V)$. This proves the result.
\end{proof}

\begin{proof}[Proof of Proposition~\ref{prop:trivialization}]
  The composition
  \[
    \Phi_2^{-1} \circ \Phi_1\co
    T\Sym^{2g}(\wt{\Sigma}\setminus\{\wt{z},\wt{w}\})^\fix \rightarrow
    T\Sym^{2g}(\wt{\Sigma}\setminus\{\wt{z},\wt{w}\})^\fix
  \]
  is an element of
  \[
    \Iso(T\Sym^{2g}\bigl(\wt{\Sigma}\setminus\{\wt{z},\wt{w}\})^\fix,
    T\Sym^{2g}(\wt{\Sigma}\setminus\{\wt{z},\wt{w}\})^\fix\bigr).
  \]
  Identify
  $\Sym^g(\Sigma \setminus \{z,w\})$ with
  $\Sym^{2g}(\wt{\Sigma}\setminus\{\wt{z},\wt{w}\})^\fix$ as in
  Lemma~\ref{lem:fixed-set} and let
  $a = \chern[\Phi_2^{-1} \circ \Phi_1] \in
  H^{\mathrm{odd}}(\Sym^g(\Sigma \setminus \{z,w\}))$ (see
  Section~\ref{sec:K1} and
  Remark~\ref{rem:cx-is-symp}). By Lemma
  \ref{lemma:works-for-cohomology}, there exists
  $b \in H^{\mathrm{odd}}(\Sym^g(\Sigma \setminus \{z,w\}))$ such that
  $b$ is in the kernel of the map
  $H^*(\Sym^g(\Sigma \setminus \{z,w\})) \rightarrow
  H^*(\TT_\alpha)\oplus H^*(\TT_\beta)$ and the image of $a+b$
  in $H^*(\partial V)$ is in the image of the map
  $H^*(V) \rightarrow H^*(\partial V)$. By Corollary
  \ref{corollary:integral-iso}, $b = \chern[\Phi_3]$ for some
  \[
    \Phi_3 \in \Iso(T\Sym^{2g}(\wt{\Sigma}\setminus\{\wt{z},\wt{w}\})^\fix,
    T\Sym^{2g}(\wt{\Sigma}\setminus\{\wt{z},\wt{w}\})^\fix).
  \]
  Functoriality of the Chern character implies that
  $\chern[\Phi_3|_{T\TT_{\wt{\alpha}}^\fix\otimes \mathbb C}]=0$. Hence, the restriction
  $\Phi_3|_{T\TT_{\wt{\alpha}}^\fix\otimes \mathbb C}$ is stably homotopic to
  the identity isomorphism. Likewise,
  $\Phi_3|_{T\TT_{\wt{\beta}}^\fix\otimes \mathbb C}$ is stably homotopic to
  the identity isomorphism.

  Consider
  \[
    \Phi_2^{-1} \circ \Phi_1 \circ \Phi_3:
    T\Sym^{2g}(\wt{\Sigma}\setminus\{\wt{z},\wt{w}\})^\fix \rightarrow
    T\Sym^{2g}(\wt{\Sigma}\setminus\{\wt{z},\wt{w}\})^\fix.
  \]
  Since $\chern[\Phi_2^{-1} \circ \Phi_1 \circ \Phi_3] =
  \chern[\Phi_2^{-1} \circ \Phi_1] + \chern [\Phi_3] = a+b$ and the
  Chern character is functorial, we see that $\chern[(\Phi_2^{-1} \circ
  \Phi_1 \circ \Phi_3)|_{\partial V}]$ is the image of $a+b$ in
  $H^*(\partial V)$ and therefore lies in the image of the bottom
  horizontal map in the following commutative diagram:
  \[
  \begin{tikzpicture}
    \node at (0,0) (TV) {$\Iso(TV,TV)$};
    \node at (4,0) (bdyV) {$\Iso(TV|_{\bdy V},TV|_{\bdy V})$};
    \node at (0,-2) (HV) {$H^{\mathrm{odd}}(V)$};
    \node at (4,-2) (HbdyV) {$H^{\mathrm{odd}}(\bdy V)$};
    \node at (6,1) (tr) {$(\Phi_2^{-1}\circ\Phi_1\circ\Phi_3)|_{\bdy V}$};
    \node[rotate=-120] at (5.25,.5) (in2) {$\in$};
    \node at (-1.5,-3) (bl) {$(a+b)|_V$};
    \node[rotate=60] at (-1,-2.5) (in3) {$\in$};
    \node at (6,-3) (br) {$\chern\bigl((\Phi_2^{-1}\circ\Phi_1\circ\Phi_3)|_{\bdy V}\bigr).$};
    \node[rotate=120] at (5.25,-2.5) (in4) {$\in$};
    \draw[->] (TV) to (bdyV);
    \draw[->] (TV) to node[left]{\lab{\chern}} node[right]{\lab{\cong}} (HV);
    \draw[->] (bdyV) to node[left]{\lab{\chern}} node[right]{\lab{\cong}} (HbdyV);
    \draw[->] (HV) to (HbdyV);
    \draw[|->] (bl) to (br);
    \draw[|->] (tr) to (br);
  \end{tikzpicture}
\]
Corollary~\ref{corollary:integral-iso} implies that the vertical maps
in this diagram are isomorphisms, so the isomorphism $(\Phi_2^{-1} \circ \Phi_1 \circ \Phi_3)|_{\partial V}$ extends over $V$. There is therefore an extension 
\[
\Phi_4 \co T\Sym^{2g}(\wt{\Sigma}\setminus\{\wt{z}\})^\fix \rightarrow T\Sym^{2g}(\wt{\Sigma}\setminus\{\wt{z}\})^\fix
\]
of $\Phi_2^{-1}\circ \Phi_1 \circ \Phi_3$. Our final isomorphism $\Phi_5$ is the composition
\[
  \Phi_5\coloneqq \Phi_2 \circ \Phi_4:
  T\Sym^{2g}(\wt{\Sigma}\setminus\{\wt{z}\})^\fix \rightarrow N\Sym^{2g}(\wt{\Sigma}\setminus\{\wt{z}\})^\fix.
\]
This map $\Phi_5$ agrees with $\Phi_1 \circ \Phi_3$ away from the divisor
$\{w\}\times\Sym^{g-1}(\Sigma)$. Since the restriction of $\Phi_3$ to
$T\TT_{\wt{\alpha}}^\fix\otimes \mathbb C$ is homotopic to the identity
and there is a homotopy of Lagrangian subbundles from
$\Phi_1(T\TT_{\wt{\alpha}}^\fix)$ to $N\TT_{\wt{\alpha}}^\fix$, there is a
homotopy of Lagrangian subbundles from $\Phi_5(T\TT_{\wt{\alpha}}^\fix)$
to $N\TT_{\wt{\alpha}}^\fix$, and similarly for
$T\TT_{\wt{\beta}}^\fix$. Therefore the map $\Phi_5$ is the desired stable tangent-normal isomorphism
  \[
    \bigl(T\Sym^{2g}(\wt{\Sigma}\setminus\{\wt{z}\})^\fix,T\TT_{\wt{\alpha}}^\fix,T\TT_{\wt{\beta}}^\fix\bigr)\cong
    \bigl(N\Sym^{2g}(\wt{\Sigma}\setminus\{\wt{z}\})^\fix,N\TT_{\wt{\alpha}}^\fix,N\TT_{\wt{\beta}}^\fix\bigr).\qedhere
  \]
\end{proof}

\section{Proof of the main theorem}\label{sec:proof}
In this section, we prove Theorem~\ref{thm:main}.  We begin with the
simplest version of the spectral sequence, and then prove a 
$\SpinC$-refined statement in Section~\ref{sec:spinc} and the
generalization from knots to links in Section~\ref{sec:links}.

\begin{theorem}\label{thm:main-knots}
  Let $Y$ be a closed $3$-manifold and $K\subset Y$ an oriented nullhomologous knot 
  with Seifert surface $F$. Let $\pi\co \Sigma(Y,K)\to Y$ be the double cover branched
  along $K$ induced by the Seifert surface $F$.
  Then, there is a spectral sequence with $E^1$-page given by
  \[
    \HFa(\Sigma(Y,K)) \otimes \FF_2[[\theta, \theta^{-1}]
  \]
  converging to
  \[
    \HFa(Y) \otimes \FF_2[[\theta, \theta^{-1}].
  \]
  In particular, 
  \[
    \dim\HFa(\Sigma(Y,K)) \geq \dim\HFa(Y).
  \]
\end{theorem}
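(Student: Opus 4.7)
The plan is to apply Large's localization theorem (Theorem~\ref{thm:Large}) with ambient symplectic manifold $M = \Sym^{2g}(\wt\Sigma \setminus \{\wt z\})$, Lagrangians $L_0 = \TT_{\wt\alpha}$ and $L_1 = \TT_{\wt\beta}$, and involution $\tau$ the map on the symmetric product induced by the covering involution of $\wt\Sigma \to \Sigma$. Under this substitution the $E^1$-page $\HF(L_0, L_1) \otimes \FF_2[[\theta, \theta^{-1}]$ of the resulting spectral sequence is identified with $\HFa(\Sigma(Y,K)) \otimes \FF_2[[\theta, \theta^{-1}]$, and by Lemma~\ref{lem:fixed-set} the fixed data $(M^\fix, L_0^\fix, L_1^\fix)$ is symplectomorphic to $(\Sym^g(\Sigma \setminus \{z\}), \TT_\alpha, \TT_\beta)$, so the limit $\HF(L_0^\fix, L_1^\fix) \otimes \FF_2[[\theta, \theta^{-1}]$ is identified with $\HFa(Y) \otimes \FF_2[[\theta, \theta^{-1}]$. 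The rank inequality is then an immediate consequence of the existence of the spectral sequence.

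To invoke Theorem~\ref{thm:Large} I would verify its three hypotheses. Hypothesis~\ref{item:symp-hyp} is the standard symplectic setup for $\HFa$ of a pointed Heegaard diagram: puncturing $\wt\Sigma$ at $\wt z$ and working with a K\"ahler form that is suitably degenerate near the divisor $\{\wt z\} \times \Sym^{2g-1}(\wt\Sigma)$ makes $M$ exact and convex at infinity, and the compact Lagrangian tori $\TT_{\wt\alpha}$ and $\TT_{\wt\beta}$ are automatically exact; this symplectic model is set up in Large's paper. Hypothesis~\ref{item:tau-hyp} follows immediately after choosing a $\tau$-equivariant complex structure on $\wt\Sigma$: the induced involution on the symmetric product is holomorphic and symplectic, and since the covering involution permutes the two preimages of each $\alpha_i$ (respectively $\beta_i$) it preserves $\TT_{\wt\alpha}$ and $\TT_{\wt\beta}$ setwise.

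The substantive step, and the one I expect to be the main obstacle, is the third hypothesis: the existence of a stable tangent-normal isomorphism between $(TM^\fix, TL_0^\fix, TL_1^\fix)$ and $(NM^\fix, NL_0^\fix, NL_1^\fix)$. This is precisely the content of Proposition~\ref{prop:trivialization}, the main technical result of Section~\ref{sec:normalIso}. Large's construction provides such an isomorphism on the doubly-punctured fixed set $\Sym^{2g}(\wt\Sigma \setminus \{\wt z, \wt w\})^\fix$; the task there is to extend it across the divisor at $\wt w$ without disturbing the Lagrangian subbundles over $\TT_{\wt\alpha}^\fix$ and $\TT_{\wt\beta}^\fix$ up to homotopy. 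That extension is carried out via the Chern character framework of Section~\ref{sec:K1}, measuring the obstruction to extension as an odd cohomology class and using the Poincar\'e dual of a Seifert surface (this is where the nullhomologous hypothesis on $K$ enters, through Lemma~\ref{lemma:works-for-cohomology}) to adjust it to an extendable class. Once Proposition~\ref{prop:trivialization} is available, the proof of Theorem~\ref{thm:main-knots} amounts to assembling these pieces and reading off the spectral sequence and rank inequality from Theorem~\ref{thm:Large}.
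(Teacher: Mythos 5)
Your overall strategy is exactly the paper's: apply Theorem~\ref{thm:Large} to $(\Sym^{2g}(\wt{\Sigma}\setminus\{\wt{z}\}), \TT_{\wt{\alpha}}, \TT_{\wt{\beta}})$ with the covering involution, identify the fixed data with $(\Sym^g(\Sigma\setminus\{z\}),\TT_\alpha,\TT_\beta)$ via Lemma~\ref{lem:fixed-set}, and feed in Proposition~\ref{prop:trivialization} as the stable tangent-normal isomorphism. You correctly isolate the tangent-normal isomorphism as the main technical input.

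However, your verification of hypothesis~\ref{item:symp-hyp} contains a false step: you assert that the compact Lagrangian tori $\TT_{\wt{\alpha}}$ and $\TT_{\wt{\beta}}$ are ``automatically exact.'' Compactness does not imply exactness for a Lagrangian torus: exactness means that the restriction to the torus of the chosen primitive of the symplectic form is exact, and since $H^1(\TT_{\wt{\alpha}};\RR)\neq 0$ this is a genuine condition on the primitive. (Exactness really matters here: it is used in Large's identification of the tangentially twisted theory with $\HF(L_0^\fix,L_1^\fix)\otimes\FF_2[\theta,\theta^{-1}]$ via the action filtration.) The paper handles this in Proposition~\ref{prop:symplectic-hypotheses}: one corrects the primitive $\lambda=-d^{\CC}\wt{\phi}$ by a closed $1$-form $a$ so that it integrates to zero over each $\wt{\alpha}_i$ and over a maximal linearly independent subset of the $\wt{\beta}_j$, and then deduces vanishing on the remaining $\wt{\beta}_j$ from Stokes' theorem applied to periodic domains, using that those periodic domains have signed area zero. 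That last point requires weak admissibility of the branched cover diagram $\wt{\HD}$, which you also never check (it is needed as well to identify the Lagrangian Floer homology upstairs with $\HFa(\Sigma(Y,K))$); it follows from admissibility of $\HD$ because, $K$ being nullhomologous, every periodic domain upstairs with multiplicity $0$ at $\wt{z}$ projects to one downstairs with multiplicity $0$ at $z$. With these two points supplied, your argument matches the paper's.
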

\begin{proof}
  Fix $\HD=(\Sigma_g,\alphas,\betas,z,w)$ a weakly admissible doubly-pointed Heegaard
  diagram for a nullhomologous knot $K$ in $Y$ and let $\wt{\HD}=(\wt{\Sigma}_{2g},\wt{\alphas},\wt{\betas},\wt{z},\wt{w})$
  denote a doubly-pointed Heegaard diagram for $(\Sigma(Y,K),\wt{K})$
  obtained by taking the branched double cover of $\HD$. By
  Proposition~\ref{prop:symplectic-hypotheses} below, $\wt{\HD}$ is also
  weakly admissible. By Proposition~\ref{prop:trivialization}, there is a stable tangent-normal isomorphism 
  \[
    \bigl(T\Sym^{2g}(\wt{\Sigma}\setminus\{\wt{z}\})^\fix,T\TT_{\wt{\alpha}}^\fix,T\TT_{\wt{\beta}}^\fix\bigr)\cong
    \bigl(N\Sym^{2g}(\wt{\Sigma}\setminus\{\wt{z}\})^\fix,N\TT_{\wt{\alpha}}^\fix,N\TT_{\wt{\beta}}^\fix\bigr).
  \]
  By Proposition~\ref{prop:symplectic-hypotheses} again, the remaining
  hypotheses of Theorem~\ref{thm:Large} are satisfied.  So,
  Theorem~\ref{thm:Large} implies the result.
\end{proof}

\begin{proposition}\label{prop:symplectic-hypotheses}
  Let $\HD=(\Sigma,\alphas,\betas,z,w)$ be a Heegaard diagram for
  a nullhomologous knot $K$ in a closed $3$-manifold $Y$ and let
  $\wt{\HD}=(\wt{\Sigma},\wt{\alphas},\wt{\betas},\wt{z},\wt{w})$ be a branched
  double cover of $\HD$.
  Assume that $\HD$ is weakly admissible for all $\SpinC$-structures.
  Then $(\wt{\Sigma},\wt{\alphas},\wt{\betas},\wt{z},\wt{w})$ is weakly admissible for all $\SpinC$-structures. Further, there is a choice of
  symplectic form on $\Sym^g(\wt{\Sigma}\setminus\{\wt{z}\})$
  satisfying hypotheses~\ref{item:symp-hyp} and~\ref{item:tau-hyp}
  from Theorem~\ref{thm:Large} (and inducing the polarization data
  studied in Section~\ref{sec:normalIso}).
\end{proposition}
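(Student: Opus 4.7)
The plan is to handle the two claims separately. For weak admissibility of $\wt{\HD}$, I would push periodic domains down along the branched covering. Given a periodic domain $\wt{D}$ in $\wt{\HD}$ (so $\bdy\wt{D}\subset\wt{\alphas}\cup\wt{\betas}$ and $n_{\wt{z}}(\wt{D})=0$), the 2-chain $\pi_\ast\wt{D}$ has boundary in $\alphas\cup\betas$ since $\pi$ carries each $\wt{\alpha}_i$ (resp.\ $\wt{\beta}_j$) to an $\alpha$-curve (resp.\ $\beta$-curve). After subtracting $n_{\tau(\wt{z})}(\wt{D})\cdot[\Sigma]$ to normalize the multiplicity at $z$ to zero, the resulting 2-chain is a bona fide periodic domain in $\HD$. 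If it is nonzero, weak admissibility of $\HD$ produces points of positive and negative multiplicity, and preimages of these regions give the required sign change in $\wt{D}$. If it is zero, then up to adjustment by multiples of $\wt{\Sigma}$, $\wt{D}$ is $\tau$-anti-invariant off the isolated branch locus, so any region of positive multiplicity is sent by $\tau$ to a region of negative multiplicity, and again both signs occur.

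For the symplectic hypotheses, the plan is to construct a $\tau$-equivariant K\"ahler form on $\Sym^{2g}(\wt{\Sigma})$ that is conical near the divisor $\{\wt{z}\}\times\Sym^{2g-1}(\wt{\Sigma})$. Following Perutz, together with the equivariant constructions in Hendricks~\cite{Hendricks12:dcov-localization} and Large~\cite{Large}, I would start with a $\tau$-equivariant K\"ahler metric on $\wt{\Sigma}$ that has an equivariant cylindrical neck near $\wt{z}$, and take the induced K\"ahler form on the symmetric product. Restricted to $\Sym^{2g}(\wt{\Sigma}\setminus\{\wt{z}\})$, this form is exact, $\tau$-equivariant, and a symplectization near infinity; the tori $\TT_{\wt{\alpha}}$ and $\TT_{\wt{\beta}}$ are compact, disjoint, exact Lagrangians which are setwise $\tau$-invariant because $\tau$ permutes the $\wt{\alpha}$- and $\wt{\beta}$-curves. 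This verifies~\ref{item:symp-hyp} and~\ref{item:tau-hyp}. Moreover, the identification of $\Sym^{2g}(\wt{\Sigma}\setminus\{\wt{z}\})^\fix$ with $\Sym^g(\Sigma\setminus\{z\})$ from Lemma~\ref{lem:fixed-set} is symplectic for this choice, so the tangent and normal polarization data appearing in Section~\ref{sec:normalIso} are exactly those induced by this form.

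The main obstacle I expect is the bookkeeping in the weak admissibility step: the branched cover sends the basepoint $\wt{z}$ to $z$ but $\tau(\wt{z})\neq\wt{z}$, so push-forwards do not automatically preserve the "multiplicity zero at the basepoint" condition, and one must carefully track normalization by multiples of the fundamental classes $[\Sigma]$ and $[\wt{\Sigma}]$ and split into $\tau$-invariant and $\tau$-anti-invariant contributions. The symplectic construction itself is essentially standard, as the bulk of the equivariant work has already been carried out in~\cite{Hendricks12:dcov-localization, Large}; only minor modifications are needed to accommodate the doubly-pointed setting with basepoints $\wt{z}$ and $\wt{w}$ in place of a single basepoint.
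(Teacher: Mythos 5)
Your treatment of weak admissibility works, and in fact takes a more combinatorial route than the paper: you push periodic domains down and apply the definition of weak admissibility directly, splitting into the cases $\pi_*\wt{D}\neq 0$ and $\pi_*\wt{D}=0$, whereas the paper pulls back an area form using Ozsv\'ath--Szab\'o's area characterization of weak admissibility (which has the side benefit of producing the form used in the second half). One correction: the ``obstacle'' you anticipate does not exist, because $\wt{z}$ and $\wt{w}$ are the branch points of $\wt{\Sigma}\to\Sigma$ and hence are fixed by $\tau$; in particular $\tau(\wt{z})=\wt{z}$, the preimage of $z$ is the single point $\wt{z}$, and your normalization term $n_{\tau(\wt{z})}(\wt{D})\cdot[\Sigma]$ is identically zero. (Where the nullhomologous hypothesis actually enters the paper's version of this step is in guaranteeing $n_{\wt{w}}(\wt{D})=0$ as well, so that smoothing the degenerate pulled-back area form at both branch points does not change signed areas of the relevant periodic domains.)

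The genuine gap is in the second half. Hypothesis~\ref{item:symp-hyp} of Theorem~\ref{thm:Large} requires $\TT_{\wt{\alpha}}$ and $\TT_{\wt{\beta}}$ to be \emph{exact} Lagrangians, and you assert this without argument. For a Lagrangian $L$ in an exact symplectic manifold with primitive $\lambda$, exactness is the condition $[\lambda|_L]=0\in H^1(L;\RR)$, which is far from automatic when $L$ is a $2g$-torus. This is where the paper does its real work: it modifies the primitive $-d^{\CC}\wt{\phi}$ by a closed $1$-form $a$ chosen so that the integrals over all $\wt{\alpha}_i$ and over a maximal linearly independent subset of the $\wt{\beta}_j$ vanish, and then uses Stokes' theorem together with weak admissibility (every periodic domain with multiplicity zero at $\wt{z}$ has zero signed area) to force the integrals over the remaining, homologically dependent $\wt{\beta}_j$ to vanish as well. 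Without weak admissibility this step fails, so exactness genuinely depends on the first half of the proposition and cannot be waved through; exactness of the Lagrangians is also used essentially in Large's localization argument. A smaller slip: the tori are compact but certainly not disjoint---their intersection points are the Floer generators; the disjointness clause in hypothesis~\ref{item:symp-hyp} pertains only to the non-compact, conical alternative.
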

\begin{proof}
  Weak admissibility is equivalent to the existence of
  an area form $\omega$ on $\Sigma$ so that the signed area of every
  periodic domain with multiplicity $0$ at $z$ is
  zero~\cite[Lemma 4.12]{OS04:HolomorphicDisks}. Since $K$ is
  nullhomologous, every periodic domain for
  $(\wt{\Sigma},\wt{\alphas},\wt{\betas})$ with multiplicity $0$ at
  $\wt{z}$ also has multiplicity $0$ at $\wt{w}$, and hence projects
  to a periodic domain in $\Sigma$ with multiplicity $0$ at $z$ (and
  $w$). Hence, the pullback $\wt{\omega}$ of $\omega$ (smoothed out at $\wt{z}$ and
  $\wt{w}$) has the property that every periodic domain with
  multiplicity $0$ at $\wt{z}$ has signed area $0$. In particular,
  $(\wt{\Sigma},\wt{\alphas},\wt{\betas},\wt{z})$ is also weakly
  admissible for all $\SpinC$-structures.

  Perutz's techniques~\cite[Section 7]{Perutz07:HamHand}, as
  applied by Hendricks to the case of punctured Heegaard
  surfaces~\cite[Section 4]{Hendricks12:dcov-localization}, show that
  if $\phi$ is an exhausting function on $\Sigma \setminus \{z\}$ such
  that $\omega = -dd^{\mathbb C}\phi$ and $\wt{\phi}$ is the lift of
  $\phi$ to $\wt{\Sigma} \setminus \{\wt{z}\}$, then there is an
  equivariant smooth exhausting function $\psi$ on
  $\Sym^{2g}(\wt{\Sigma} \setminus \{\wt{z}\})$ which agrees with
  $\wt{\phi}^{\times 2g}$ away from a neighborhood of the diagonal. In
  particular, if $\wt{\omega}=-dd^{\mathbb C}\wt{\phi}$ is the
  symplectic form on $\wt{\Sigma} \setminus \{\wt{z}\}$, then
  $-dd^{\mathbb C}\psi$ is an exact equivariant symplectic form on
  $M=\Sym^{2g}(\wt{\Sigma} \setminus \{\wt{z}\})$ which agrees with
  $\wt{\omega}^{\times 2g}$ away from a neighborhood of the
  diagonal. This shows that $M$ is an exact symplectic manifold and
  convex at infinity. Further, if $\lambda = -d^{\mathbb C}\wt{\phi}$
  then $-dd^{\mathbb C}\psi$ has a primitive $-d^{\mathbb C}\psi$ that agrees with
  $\lambda^{\times 2g}$ away from the diagonal. 

  To establish that $L_0 = \TT_{\wt{\alpha}}$ and
  $L_1 = \TT_{\wt{\beta}}$ are exact Lagrangians in $M$, we first
  check that the curves $\wt{\alpha}_i$ and $\wt{\beta}_j$ are exact with
  respect to a suitable primitive of $\wt{\omega}$ in $\wt{\Sigma} \setminus \{\wt{z}\}$.
  Consider the primitive $\lambda=-d^{\CC}\wt{\phi}$ of
  $\wt{\omega}$. We will adjust $\lambda$ on $\wt{\Sigma}\setminus\{\wt{z}\}$ so that for all $i$,
  $\int_{\wt{\alpha}_i}\lambda=\int_{\wt{\beta}_i}\lambda=0$, and then
  adjust $-d^\CC\psi$ correspondingly on $\Sym^{2g}(\wt{\Sigma}\setminus\{\wt{z}\})$. Reordering
  the $\wt{\beta}_i$, arrange that
  $[\wt{\alpha}_1],\dots,[\wt{\alpha}_{2g}],[\wt{\beta}_1],\dots,[\wt{\beta}_k]\in
  H_1(\wt{\Sigma};\QQ)$ are linearly independent and
  \begin{equation}\label{eq:span}
    [\wt{\beta}_{k+1}],\dots,[\wt{\beta}_{2g}]\in\Span([\wt{\alpha}_1],\dots,[\wt{\alpha}_{2g}],[\wt{\beta}_1],\dots,[\wt{\beta}_k])\subset H_1(\wt{\Sigma};\QQ).
  \end{equation}
  There is a cohomology class
  $[a]\in H^1(\wt{\Sigma};\RR)$ so that for all $i=1,\dots,2g$,
  $\langle [a], [\wt{\alpha}_i]\rangle=\int_{\wt{\alpha}_i}\lambda$, and for
  $i=1,\dots,k$, $\langle [a],[\wt{\beta}_i]\rangle =
  \int_{\wt{\beta}_i}\lambda$. Choose a closed $1$-form $a$ representing
  $[a]$ and let $\lambda'=\lambda-a$. Then $\lambda'$ is still a
  primitive of $\wt{\omega}$ and
  $\int_{\wt{\alpha}_i}\lambda'=\int_{\wt{\beta}_j}\lambda'=0$ for $1\leq i\leq 2g$
  and $1\leq j\leq k$. We claim that in fact $\int_{\wt{\beta}_j}\lambda'=0$
  for $j=k+1,\dots,2g$ as well. By Equation~\eqref{eq:span} there is a
  periodic domain $P$ with boundary
  \[
    \bdy P = m_1[\wt{\alpha}_1]+\cdots+m_{2g}[\wt{\alpha}_{2g}]+n_1[\wt{\beta}_1]+\cdots+n_k[\wt{\beta}_k]+p[\wt{\beta}_j]
  \]
  for some $m_1,\dots,m_{2g},n_1,\dots,n_k,p\in\ZZ$, $p\neq 0$. By Stokes'
  theorem,
  \[
    p\int_{\wt{\beta}_j}\lambda' = \int_P\wt{\omega}-m_1\int_{\wt{\alpha}_1}\lambda'-\cdots-n_k\int_{\wt{\beta}_k}\lambda',
  \]
  but by construction every term on the right-hand side vanishes.

  Now, let $[b]\in H^1(\Sym^g(\wt{\Sigma}\setminus\{\wt{z}\});\RR)$ be the image
  of the class $[a]$ under the isomorphism
  $H^1(\Sym^{2g}(\wt{\Sigma}\setminus\{\wt{z}\});\RR)\cong
  H^1(\wt{\Sigma}\setminus\{\wt{z}\};\RR)$ induced by the inclusion
  $\wt{\Sigma}\into\Sym^{2g}(\wt{\Sigma})$, and let $b$ be a closed
  $1$-form representing $[b]$. Then $-d^{\CC}\psi-b$ is a primitive
  for the symplectic form on
  $\Sym^{2g}(\wt{\Sigma}\setminus\{\wt{z}\})$ and, from the
  computation in the previous paragraph, the restriction of
  $-d^{\CC}\psi-b$ to $\TT_{\wt{\alpha}}$ and $\TT_{\wt{\beta}}$
  is exact.  This
  concludes the proof.
\end{proof}

\subsection{The \texorpdfstring{$\SpinC$}{spin-c} refinement}\label{sec:spinc}
In this section, we will refine Theorem~\ref{thm:main-knots} to
respect $\SpinC$ structures.
First, we must discuss $\SpinC$ structures on branched covers.  

\begin{definition}\label{def:pullback-spinc}
  Let $\spinc$ be a $\SpinC$-structure on $Y$ and
  $\pi\co(\Sigma(Y,K),\wt{K})\to (Y,K)$ be a double cover branched
  along a nullhomologous knot $K$.
  The {\em pullback} $\SpinC$-structure $\pi^*\spinc$ is characterized
  as follows. If $\wt{K}=\pi^{-1}(K)$ denotes the double point set then on
  $\Sigma(Y,K)\setminus\nbd(\wt{K})$, the map $\pi$ is a local
  diffeomorphism, so
  $T\bigl(\Sigma(Y,K)\setminus\nbd(\wt{K})\bigr)\cong \pi^*T\bigl( Y \setminus \nbd(K) \bigr)$. Thus,
  $\spinc \in \SpinC(Y)$ induces a $\SpinC$-structure $\pi^*\spinc$ on
  $\Sigma(Y,K)\setminus\nbd(\wt{K})$. The obstruction to extending
  $\pi^*\spinc|_{\bdy\nbd(\wt{K})}$ over $\nbd(\wt{K})$ is
  $c_1(\pi^*\spinc|_{\bdy\nbd(\wt{K})})=\pi^*c_1(\spinc|_{\bdy\nbd(\wt{K})})$,
  which is the pullback of the obstruction
  to extending $\spinc$ over $\nbd(K)$
  and hence vanishes. Any two extensions differ by a multiple of
  $\PD[\wt{K}]=0$, so the extension of $\pi^*\spinc$ to all of
  $\Sigma(Y,K)$ is unique.

  For the branched double cover of an (oriented) nullhomologous link $L$ where some components are
  homologically essential, the uniqueness step above fails. For links,
  define $\pi^*\spinc$ as follows. Identify a neighborhood of $L$ with
  $D^2\times L$ so that the Seifert surface is given by $[0,1)\times
  \{0\}\times L$. Choose a vector field $v$ on $Y$
  representing $\spinc$, and so that in this neighborhood $v$ is
  given by $\partial/\partial\theta$, where $\theta$ is a coordinate
  on $L$. In particular, $v$ is positively tangent to
  $L$. From the construction of the branched double cover, there is an
  induced vector field $\wt{v}$ on $\Sigma(Y,L)$ so that on
  $\Sigma(Y,L)\setminus\wt{L}$, $d\pi(\wt{v})=v$, and $\wt{v}$ is
  positively tangent to $\wt{L}$. Then $\pi^*\spinc$ is the
  $\SpinC$-structure represented by $\wt{v}$.
\end{definition}
It is immediate from the construction that, for knots, these two
definitions of $\pi^*\spinc$ agree. It follows from
Proposition~\ref{prop:knotification} below that for links the second
construction is independent of the choice of $v$ representing
$\spinc$. It also follows that reversing the orientation of all
components of $L$ gives the same map $\pi^*$ on $\SpinC$-structures.

We note next that the definition of pullback $\SpinC$ structures
behaves well with respect to the association of $\SpinC$ structures to
intersection points in Heegaard diagrams.  Fix
$\HD=(\Sigma_g,\alphas,\betas,z,w)$ a doubly-pointed Heegaard diagram
for a nullhomologous knot $K$ in $Y$ and let
$\wt{\HD}=(\wt{\Sigma},\wt{\alphas},\wt{\betas},\wt{z},\wt{w})$ be a
branched double cover of $\HD$, which is a doubly-pointed Heegaard
diagram for $(\Sigma(Y,K),\wt{K})$.  Recall that Ozsv\'ath-Szab\'o~\cite{OS04:HolomorphicDisks}
gave an association
$\spinc_z\co \TT_\alpha \cap \TT_\beta \to \SpinC(Y)$.  For
$x \in \TT_\alpha \cap \TT_\beta$, we will sometimes write $\wt{x}$ for the
intersection point $\pi^{-1}(x)$ in
$\TT_{\wt{\alpha}} \cap \TT_{\wt{\beta}}$.
\begin{lemma}\label{lem:pullback-spinc-intersections}
  Let $K$ be a nullhomologous knot in $Y$. Then for
  $x \in \TT_\alpha \cap \TT_\beta$, we have
  $\pi^*(\spinc_z(x)) = \spinc_{\wt{z}}(\pi^{-1}(x))$.
\end{lemma}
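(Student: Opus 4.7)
The plan is to realize both $\spinc_z(x)$ and $\spinc_{\wt z}(\wt x)$ by nowhere-zero vector fields that are lifts of each other outside a tubular neighborhood of the branch locus, and then invoke uniqueness of the extension as in Definition~\ref{def:pullback-spinc}. Recall the Morse-theoretic description of $\spinc_z(x)$ from \cite{OS04:HolomorphicDisks}: choose a self-indexing Morse function $f$ on $Y$ compatible with $\HD$, with one index-$0$ critical point $p$, one index-$3$ critical point $q$, and $g$ critical points of each intermediate index, arranged so that the two gradient flow lines $\gamma_z,\gamma_w$ of $\nabla f$ from $p$ to $q$ passing through $z$ and $w$ together form $K$. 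Each coordinate $x_i$ of $x$ lies on a unique gradient flow line $\gamma_{x_i}$ joining an index-$1$ critical point to an index-$2$ critical point. Then $\spinc_z(x)$ is represented by the nowhere-zero vector field $v_x$ obtained from $\nabla f$ by replacing it, inside disjoint tubular neighborhoods of $\gamma_{x_1},\dots,\gamma_{x_g},\gamma_z$, with standard nowhere-zero models.

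For the branched double cover, choose a compatible self-indexing Morse function $\wt f$ on $\Sigma(Y,K)$ with one index-$0$ critical point $\wt p$ and one index-$3$ critical point $\wt q$ (the unique preimages of $p$ and $q$) and $2g$ critical points of each intermediate index (the preimages of the intermediate critical points of $f$). One cannot simply take $\wt f = f\circ\pi$, because the naive pullback is degenerate in the normal directions to $\wt K$ at $\wt p,\wt q$; instead, $\wt f$ should be chosen to agree with $f\circ\pi$ outside $\nbd(\wt K)$ and to be a local quadratic model at $\wt p,\wt q$ inside $\nbd(\wt K)$. With this choice, the gradient $\nabla\wt f$ agrees with $d\pi^{-1}(\nabla f)$ on $\Sigma(Y,K)\setminus\nbd(\wt K)$. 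The intersection point $\wt x=\pi^{-1}(x)$ consists of the points of $\wt\Sigma$ on the $2g$ lifted flow lines $\wt\gamma_{x_i}$ (each a lift of some $\gamma_{x_j}$), and $\wt z$ lies on a lift $\wt\gamma_z$ of $\gamma_z$, which is itself a subarc of $\wt K$. Now construct a nowhere-zero vector field $\wt v_{\wt x}$ representing $\spinc_{\wt z}(\wt x)$ by modifying $\nabla\wt f$ in disjoint tubular neighborhoods of the $\wt\gamma_{x_i}$ and of $\wt\gamma_z$: near each $\wt\gamma_{x_i}$ use the pullback of the modification defining $v_x$ (legitimate because these neighborhoods lie outside $\nbd(\wt K)$, where $\pi$ is a local diffeomorphism), and near $\wt\gamma_z\subset\wt K$ use any nowhere-zero model supported inside $\nbd(\wt K)$.

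By construction, $\wt v_{\wt x}$ equals $d\pi^{-1}(v_x)$ on $\Sigma(Y,K)\setminus\nbd(\wt K)$, so the $\SpinC$-structures $\spinc_{\wt z}(\wt x)$ and $\pi^*\spinc_z(x)$ agree on that complement. Since $K$ is nullhomologous, it bounds a Seifert surface $F\subset Y$; the preimage $\pi^{-1}(F)$ consists of two copies of $F$ glued along $K$, so $\wt K$ bounds either half and $[\wt K]=0\in H_1(\Sigma(Y,K))$, giving $\PD[\wt K]=0$. Hence the extension of a $\SpinC$-structure from $\Sigma(Y,K)\setminus\nbd(\wt K)$ to $\Sigma(Y,K)$ is unique (this is exactly the obstruction argument of Definition~\ref{def:pullback-spinc}), which forces $\spinc_{\wt z}(\wt x)=\pi^*\spinc_z(x)$. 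The main subtlety to verify is the construction of $\wt f$ at $\wt p,\wt q$: one needs to smoothly transition from $f\circ\pi$ outside a collar of $\wt K$ to a nondegenerate quadratic model near the branch points, without creating spurious critical points or altering the flow lines $\wt\gamma_{x_i}$. Once this local modification is in place, the rest of the argument is a direct transport of the Ozsv\'ath--Szab\'o construction through the local diffeomorphism $\pi\co \Sigma(Y,K)\setminus\nbd(\wt K)\to Y\setminus\nbd(K)$.
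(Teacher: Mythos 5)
Your proof is correct and follows essentially the same route as the paper's: represent $\spinc_z(x)$ by a vector field obtained by modifying $\nabla f$ near the relevant flow lines, observe that its lift under the local diffeomorphism $\pi$ away from $\nbd(\wt K)$ represents $\spinc_{\wt z}(\wt x)$ restricted to $\Sigma(Y,K)\setminus\nbd(\wt K)$, and conclude by the uniqueness of the extension over $\nbd(\wt K)$ coming from $\PD[\wt K]=0$. The only difference is that you take extra care to build a genuine Morse function $\wt f$ on all of $\Sigma(Y,K)$ (smoothing $f\circ\pi$ at the branch points), whereas the paper sidesteps this by working only on the complement of $\nbd(\wt K)$ and letting the uniqueness of the extension absorb everything near $\wt K$; both are fine.
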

\begin{proof}
  Choose a Morse function $f$ on $(Y,K)$ compatible with the
  doubly-pointed Heegaard diagram $(\Sigma, \alphas, \betas, z, w)$.
  Represent $\spinc_z(x)|_{Y\setminus \nbd(K)}$ by a non-vanishing
  vector field by modifying $\nabla f$ on $Y \setminus\nbd(K)$ in a
  neighborhood of the trajectories of $\nabla f$ through $x$.  Consider
  $\pi^* f=f\circ\pi$ and the induced homology class of vector field
  on $\Sigma(Y,K) \setminus \nbd(\widetilde{K})$.  This class is precisely the
  $\SpinC$ structure on $\Sigma(Y,K) \setminus \nbd(\widetilde{K})$
  corresponding to $\tilde{x}$.  Now, define a $\SpinC$ structure on
  $\Sigma(Y,K)$ by extending over $\Sigma(Y,K) \setminus \nbd(\widetilde{K})$.
  As discussed in Definition~\ref{def:pullback-spinc}, the extension
  is unique because $K$ is nullhomologous.
  Hence, this $\SpinC$ structure is exactly
  $\spinc_{\wt{z}}(\pi^{-1}(x))$.  However, this $\SpinC$-structure is
  also $\pi^*(\spinc_z(x))$ as constructed in
  Definition~\ref{def:pullback-spinc}.
\end{proof} 

\begin{remark}\label{rmk:pullback-spinc}  By Lemma~\ref{lem:pullback-spinc-intersections}, if we change the intersection point $x$ for $Y$ without changing the corresponding $\SpinC$ structure on $Y$, then the lifted elements represent the same $\SpinC$ structure on $\Sigma(Y,K)$.  Another way to see this is as follows.  Given a Whitney disk $u \in \pi_2(x,y)$ in $Sym^g(\Sigma \setminus \{z\})$, this naturally induces a Whitney disk $\wt{u} \in \pi_2(\wt{x}, \wt{y})$ in $Sym^{\wt{g}}(\wt{\Sigma} \setminus \{\wt{z}\})$ by $\widetilde{u}(q) = \pi^{-1}(u(q))$.  
\end{remark}

The alternative description of pullback $\SpinC$ structures described in the proof of Lemma~\ref{lem:pullback-spinc-intersections} is also a useful viewpoint for studying the connection between $\SpinC$ structures and cohomology classes.    
\begin{lemma}\label{lem:pullback-spinc-behaves}
  For $K\subset Y$ a nullhomologous knot, the pullback $\SpinC$ structure satisfies
  \begin{align}
	\pi^*\overline{\spinc} &= \overline{\pi^*\spinc} \label{pullback:conjugate} \\
    \pi^*(\spinc+a)&=\pi^*(\spinc)+\pi^*(a) \label{pullback:affine} \\
    c_1(\pi^*\spinc)&=\pi^*c_1(\spinc) \label{pullback:chern}.
  \end{align}
  for any $\spinc\in\SpinC(Y)$ and $a\in H^2(Y)$.
\end{lemma}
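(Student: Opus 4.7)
My plan is to prove all three properties using the vector-field description of $\SpinC$-structures, which treats knots and links uniformly via Definition~\ref{def:pullback-spinc}. The guiding principle is that on the complement $\Sigma(Y,L)\setminus\nbd(\wt{L})$ the map $\pi$ is a local diffeomorphism, under which conjugation, the $H^2$-action, and the first Chern class of $\SpinC$-structures all behave naturally. Since the representative of $\pi^*\spinc$ is obtained by lifting the representative of $\spinc$, each identity will reduce to checking that the corresponding operation on vector fields commutes with lifting and extends across the branch locus $\wt{L}$.

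For equation~(\ref{pullback:conjugate}), I would recall that conjugation sends a representing vector field $v$ to $-v$. If $v$ is positively tangent to $L$ as in Definition~\ref{def:pullback-spinc}, then $-v$ is positively tangent to the reverse orientation $-L$; since reversing the orientation of every component of $L$ does not alter $\pi^*$ (as noted after Definition~\ref{def:pullback-spinc}), the lift of $-v$ is $-\wt{v}$, which also directly represents $\overline{\pi^*\spinc}$.

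For equation~(\ref{pullback:affine}), I would represent $a\in H^2(Y)\cong H_1(Y)$ by an embedded $1$-cycle $\gamma$ which, by transversality, may be taken disjoint from $L$. Then $\spinc+a$ is obtained from $\spinc$ by a Reeb-type modification of $v$ supported in a tubular neighborhood of $\gamma$, arranged so that $v$ is unchanged near $L$ and hence still positively tangent to $L$. The lift of the modified vector field agrees with $\wt{v}$ away from $\pi^{-1}(\gamma)$ and is the same Reeb modification of $\wt{v}$ along $\pi^{-1}(\gamma)$. Because the restriction $\pi\co\pi^{-1}(\gamma)\to\gamma$ is an unramified double cover, one has $\PD[\pi^{-1}(\gamma)]=\pi^*\PD[\gamma]=\pi^*a$ by the usual transfer relation, so the lifted vector field represents both $\pi^*(\spinc+a)$ and $\pi^*\spinc+\pi^*a$.

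Finally, equation~(\ref{pullback:chern}) is a formal consequence of the first two: using the identity $\overline{\spinc}=\spinc-c_1(\spinc)$ in the $H^2$-torsor structure on $\SpinC(Y)$, applying $\pi^*$ and invoking~(\ref{pullback:affine}) gives $\pi^*\overline{\spinc}=\pi^*\spinc-\pi^*c_1(\spinc)$; comparing with $\overline{\pi^*\spinc}=\pi^*\spinc-c_1(\pi^*\spinc)$ via~(\ref{pullback:conjugate}) yields the identity. The main obstacle is (\ref{pullback:affine}): I need to make rigorous both that the Reeb surgery modification on the base lifts to the analogous modification on the cover and that $\pi^{-1}(\gamma)$ represents $\PD(\pi^*a)$, which requires care about the tangency condition near $L$ and the transfer identity for the partially-unramified covering map $\pi$.
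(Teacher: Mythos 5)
Your proposal is correct, and for the key identity~\eqref{pullback:affine} it takes a genuinely different route from the paper. The paper proves \eqref{pullback:conjugate} exactly as you do, via the vector-field description, and deduces \eqref{pullback:chern} from the other two via $c_1(\spinct)=\spinct-\overline{\spinct}$, again just as you do. For \eqref{pullback:affine}, however, the paper works inside a doubly-pointed Heegaard diagram: it represents $\spinc$ and $\spinc+a$ by intersection points $x,x'$, uses the formula $\spinc_z(x')-\spinc_z(x)=\PD[\epsilon(x,x')]$ together with the observation that the transfer map sends $\epsilon(x,x')$ to $\epsilon(\wt{x},\wt{x}')$, and then invokes Lemma~\ref{lem:pullback-spinc-intersections} to convert this into the statement about $\pi^*$. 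Your argument instead stays entirely in the differential-topological picture: realize $\PD(a)$ by a $1$-cycle $\gamma$ disjoint from $L$, perform the Reeb turbulization of $v$ along $\gamma$ supported away from the normal-form neighborhood of $L$, and observe that its lift is the Reeb turbulization of $\wt{v}$ along $\pi^{-1}(\gamma)$, with $\PD[\pi^{-1}(\gamma)]=\pi^*\PD[\gamma]$ because $\pi$ is transverse to $\gamma$. Both are valid. The paper's route buys compatibility with the Heegaard-diagram bookkeeping it needs elsewhere (and reuses Lemma~\ref{lem:pullback-spinc-intersections}), while yours is more self-contained, treats knots and links uniformly through the vector-field definition, and avoids any appeal to diagrams. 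The two points you flag as needing care are exactly the right ones, and both go through: choosing the rotationally symmetric local model for the turbulization makes it manifestly $\pi$-equivariant over the solid-torus neighborhood of $\gamma$ (whether its preimage is connected or not), and since $\gamma$ misses $L$ the modified field is literally unchanged in the prescribed normal form near $L$, so Definition~\ref{def:pullback-spinc} applies verbatim to it.
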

\begin{proof}
  \eqref{pullback:conjugate} Recall that if $v$ is a non-vanishing
  vector field corresponding to a $\SpinC$ structure $\spinc$, then
  $-v$ corresponds to $\overline{\spinc}$.  So, the claim follows
  easily from Definition~\ref{def:pullback-spinc}, since if $v$
  corresponds to $\spinc$ on $Y$, then $v |_{Y \setminus \nbd(K)}$
  corresponds to $\spinc |_{Y \setminus \nbd(K)}$ on $Y \setminus \nbd(K)$, and
  $\pi^*v |_{\Sigma(Y,K) \setminus \nbd(\wt{K})}$ corresponds to
  $\pi^* \spinc |_{\Sigma(Y,K) \setminus \nbd(\wt{K})}$.

  \eqref{pullback:affine} This is equivalent to showing that $\pi^*(\spinc' - \spinc) = \pi^*\spinc' - \pi^*\spinc$.  Let $\spinc$ and $\spinc'$ be represented by $x,x' \in \TT_\alpha \cap \TT_\beta$ respectively, so 
\[
\spinc_z(x') - \spinc_z(x) = PD[\epsilon(x,x')],
\]
and 
\[
\spinc_{\wt{z}}(\wt{x}') - \spinc_{\wt{z}}(\wt{x}) = PD[\epsilon(\wt{x},\wt{x}')].
\]
The transfer map $\pi^!$ sends $\epsilon(x,x')$ to
$\epsilon(\wt{x},\wt{x}')$, i.e., $\pi^!\epsilon(x,x') =
\epsilon(\wt{x},\wt{x}')$. (If we represent $\epsilon(x,x')$ by a $1$-manifold in $\Sigma\setminus\{z,w\}$ then $\pi^!\epsilon(x,x')$ is the total preimage of that $1$-manifold.) It follows that 
\begin{align*}
\pi^*(\spinc_z(x') - \spinc_z(x)) &= \pi^*PD[\epsilon(x,x')] \\
&= PD[\pi^!\epsilon(x,x')] \\
&= PD[\epsilon(\wt{x},\wt{x}')] \\
&= \spinc_{\wt{z}}(\wt{x}') - \spinc_{\wt{z}}(\wt{x}) \\
&= \pi^*(\spinc_z(x')) - \pi^*(\spinc_z(x)),
\end{align*}
by Lemma~\ref{lem:pullback-spinc-intersections}. 

\eqref{pullback:chern} Recall that the first Chern class of a $\SpinC$
structure $\mathfrak{t}$ on a closed $3$-manifold can be computed by
$\mathfrak{t} - \overline{\mathfrak{t}}$.  So, the claim follows from Equations~\eqref{pullback:conjugate} and~\eqref{pullback:affine}.  
\end{proof}

We are now ready to state the $\SpinC$-refinement of Theorem~\ref{thm:main}.
\begin{proposition}\label{prop:spinc}
  Let $Y$ be a closed, connected, oriented $3$-manifold, $K \subset Y$ a nullhomologous
  knot, and $\spinc$ a $\SpinC$-structure on $Y$.
  Then, the spectral sequence from Theorem~\ref{thm:main-knots} splits
  along $\tau$-invariant $\SpinC$-structures on $\Sigma(Y,K)$. In particular, there is an inequality
  \[
    \dim \HFa(\Sigma(Y,K), \pi^*\spinc) \geq \sum_{\pi^*\spinc' = \pi^*\spinc} \dim \HFa(Y,\spinc').  
  \]
\end{proposition}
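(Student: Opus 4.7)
The plan is to apply Proposition~\ref{prop:Large-decomp}, the $\SpinC$-refined form of Large's localization theorem recorded above, to the symplectic setup used in the proof of Theorem~\ref{thm:main-knots}. That setup identifies $(M,L_0,L_1)$ with $(\Sym^{2g}(\wt\Sigma\setminus\{\wt z\}),\TT_{\wt\alpha},\TT_{\wt\beta})$ and its fixed set with $(\Sym^g(\Sigma\setminus\{z\}),\TT_\alpha,\TT_\beta)$ via Lemma~\ref{lem:fixed-set}. Proposition~\ref{prop:Large-decomp} then produces, for each $\wt\spinc \in \pi_0 P(\TT_{\wt\alpha},\TT_{\wt\beta})$, a spectral sequence
\[
  \HFa(\Sigma(Y,K),\wt\spinc)\otimes \FF_2[[\theta,\theta^{-1}] \Rightarrow \bigoplus_{\spinc \in \iota_*^{-1}(\wt\spinc)} \HFa(Y,\spinc)\otimes \FF_2[[\theta,\theta^{-1}],
\]
together with the corresponding rank inequality.

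The main step I anticipate is to identify the inclusion-induced map $\iota_*\co \pi_0 P(\TT_\alpha,\TT_\beta) \to \pi_0 P(\TT_{\wt\alpha},\TT_{\wt\beta})$ with the pullback map $\pi^*\co \SpinC(Y) \to \SpinC(\Sigma(Y,K))$, using the standard bijections between path components of these path spaces and $\SpinC$-structures. Under these bijections, a constant path at an intersection point $x \in \TT_\alpha \cap \TT_\beta$ corresponds to $\spinc_z(x)$; via the symplectomorphism of Lemma~\ref{lem:fixed-set}, the inclusion $\iota$ sends this to the constant path at the fixed point $\wt x = \pi^{-1}(x) \in \TT_{\wt\alpha}^{\fix} \cap \TT_{\wt\beta}^{\fix}$, which corresponds to $\spinc_{\wt z}(\wt x)$. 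Lemma~\ref{lem:pullback-spinc-intersections} asserts precisely that $\spinc_{\wt z}(\wt x) = \pi^*\spinc_z(x)$, so $\iota_*$ agrees with $\pi^*$ on the $\SpinC$-structures realized by intersection points; the general case follows from the affine-structure identity~\eqref{pullback:affine} of Lemma~\ref{lem:pullback-spinc-behaves} together with Remark~\ref{rmk:pullback-spinc}, which guarantees that changing the representing intersection point alters neither side.

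With $\iota_* = \pi^*$ in hand, the preimage $\iota_*^{-1}(\pi^*\spinc)$ equals $\{\spinc' \in \SpinC(Y) : \pi^*\spinc' = \pi^*\spinc\}$, and specializing Proposition~\ref{prop:Large-decomp} to $\wt\spinc = \pi^*\spinc$ (which is automatically $\tau$-invariant) delivers both the $\SpinC$-refined spectral sequence and the rank inequality claimed in Proposition~\ref{prop:spinc}. The main obstacle is conceptual rather than computational: one must correctly align Large's abstract path-component bookkeeping with the Heegaard Floer $\SpinC$-decomposition, and this alignment is exactly the purpose of Lemma~\ref{lem:pullback-spinc-intersections}.
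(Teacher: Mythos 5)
Your proposal is correct and follows essentially the same route as the paper: both apply Proposition~\ref{prop:Large-decomp} to the setup of Theorem~\ref{thm:main-knots} and use Lemma~\ref{lem:pullback-spinc-intersections} (together with the identification of path components of $P(\TT_\alpha,\TT_\beta)$ with $\SpinC$-structures) to identify $\iota_*$ with $\pi^*$. The only cosmetic difference is your extra appeal to the affine-structure identity and Remark~\ref{rmk:pullback-spinc}, which the paper folds into the same lemma; this changes nothing substantive.
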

\begin{proof}
Choose a doubly-pointed Heegaard diagram $(\Sigma, \alphas, \betas, z,
w)$ for $K \subset Y$ which is
weakly admissible for all $\SpinC$-structures.  As before, the fixed point sets of the
$\ZZ/2\ZZ$-action on $(\Sym^{2g}(\wt{\Sigma} \setminus \{\wt{z}\}),
\TT_{\wt{\alpha}}, \TT_{\wt{\beta}})$ are identified with $(\Sym^g(\Sigma \setminus \{\wt{z}\}), \TT_{\alpha}, \TT_{\beta})$.  Under this identification, the map 
\[
\iota_* \co \pi_0 P(\TT_{\alpha}, \TT_{\beta}) \to \pi_0 P(\TT_{\wt{\alpha}}, \TT_{\wt{\beta}})
\]
from Section~\ref{sec:Large} sends the constant path $[x]$ associated
to a point $x \in \TT_{\alpha} \cap \TT_{\beta}$ to the constant path
$[\pi^{-1}(x)]$ associated to the point $\pi^{-1}(x) \in \TT_{\wt{\alpha}} \cap \TT_{\wt{\beta}}$.   

Recall that two elements $x,y \in \TT_\alpha \cap \TT_\beta$
have $[x],[y]$ in the same path component in $P(\TT_\alpha, \TT_\beta)$
(inside $\Sym^g(\Sigma \setminus \{z\})$) if
and only if $\spinc_z(x) = \spinc_z(y)$~\cite[Section
2]{OS04:HolomorphicDisks}.   Similarly, two elements
$\wt{x},\wt{y}\in\TT_{\wt{\alpha}} \cap \TT_{\wt{\beta}}$ have
$[\wt{x}],[\wt{y}]$ in the same path
component of $P(\TT_{\wt{\alpha}}, \TT_{\wt{\beta}})$ if and only if
$\spinc_{\wt{z}}(\wt{x})=\spinc_{\wt{z}}(\wt{y})$. Finally, by
Lemma~\ref{lem:pullback-spinc-intersections},
$\pi^*\spinc_z(x) = \spinc_{\wt{z}}(\pi^{-1}(x))$.  Putting this all
together, if an element of $\pi_0 P(\TT_{\alpha}, \TT_{\beta})$
corresponds to $\spinc$, then the image under $\iota_*$ corresponds to
$\pi^*\spinc$.  (See Remark~\ref{rmk:pullback-spinc} for an alternate
viewpoint.)

Thus,
Proposition~\ref{prop:Large-decomp} (together with
Propositions~\ref{prop:trivialization} and~\ref{prop:symplectic-hypotheses}) implies the desired splitting of spectral sequences and inequality 
\[
  \dim \HFa(\Sigma(Y,K), \pi^*\spinc) \geq \sum_{\pi^*\spinc' = \pi^*\spinc}  \dim \HFa(Y,\spinc').\qedhere
\]
\end{proof}

\subsection{From knots to links}\label{sec:links}
In this section, we use Ozsv\'ath-Szab\'o's knotification procedure to
deduce Theorem~\ref{thm:main} for links with an arbitrary number of components
from Proposition~\ref{prop:spinc}.

Suppose $L \subset Y$ has two
components $L_1, L_2$. Let $B_i$ be a ball intersecting $L_i$ in a trivial
arc $A_i$. Note that $Y \# S^2 \times S^1$ can be produced by identifying the
boundary components of $Y \setminus (B_1 \cup B_2)$ so that the endpoints of
$A_1$ and $A_2$ are identified. 
The link
$(L_1\setminus A_1)\cup(L_2\setminus A_2)\subset Y\# S^2\times S^1$ is the
\emph{knotification} of $L$.  More generally, the knotification of an
$\ell$-component link is obtained by doing this process $\ell - 1$ times until a
single component remains in $Y \#_{\ell - 1} S^2 \times S^1$.  We denote the
knotification of $L$ by $\kappa_L$.  It turns out that the knotification
operation behaves well with respect to branched double covers.  Letting
$\spinct$ denote the unique torsion $\SpinC$ structure on
$\#_{\ell - 1} S^2 \times S^1$, we have:

\begin{proposition}\label{prop:knotification}
  Let $L$ be a nullhomologous link in $Y$ with $\ell$ components and let
  $\kappa_L$ be its knotification. Fix a Seifert surface $F$ for $L$,
  let $\pi\co \Sigma(Y,L) \to Y$ denote the corresponding
  double cover of $Y$ branched along $L$, and let $\wt{L}=\pi^{-1}(L)$ be the
  double point set. Then, $\Sigma(Y,L) \#_{\ell - 1} S^2 \times S^1$ is
  homeomorphic to $\Sigma(Y \#_{\ell - 1} S^2 \times S^1, \kappa_L)$ and the
  knotification of $\wt L$ is the preimage of $\kappa_L$.  Furthermore,
  given a $\SpinC$ structure $\spinc$ on $Y$, the pullback of
  $\spinc \# \spinct$ under
  $\pi'\co \Sigma(Y \#_{\ell - 1} S^2 \times S^1 ,\kappa_L) \to Y \#_{\ell - 1}
  S^2 \times S^1$ is $(\pi^*\spinc) \# \spinct$.
\end{proposition}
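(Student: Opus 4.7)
I would argue by induction on the number of components $\ell$, where the inductive step consists of a single knotification move joining two components; for $\ell=1$ there is nothing to prove. The key local observation is that $B^3$ branched double-covered over a standardly embedded arc is again $B^3$, with the covering involution acting on the boundary $S^2$ by a standard orientation-preserving rotation whose two fixed points are the branch points. Consequently, given balls $B_1, B_2 \subset Y$ meeting $L_1, L_2$ in unknotted arcs $A_1, A_2$, each preimage $\wt{B}_i := \pi^{-1}(B_i)$ is a $3$-ball in $\Sigma(Y, L)$ with equivariant boundary sphere.

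For parts (1) and (2), the plan is to construct the branched cover of $(Y \# S^2 \times S^1, \kappa_L)$ explicitly by cut-and-paste. Writing $Y \# S^2 \times S^1$ as $Y \setminus \mathrm{int}(B_1 \cup B_2)$ with $\partial B_1$ identified to $\partial B_2$ via an orientation-reversing diffeomorphism sending endpoints of $A_1$ to endpoints of $A_2$, I would take the corresponding model for the branched double cover, namely $\Sigma(Y, L) \setminus \mathrm{int}(\wt{B}_1 \cup \wt{B}_2)$ with $\partial \wt{B}_1$ glued to $\partial \wt{B}_2$ along the unique equivariant lift of the base gluing map. Away from the branch points this lift is a local diffeomorphism covering an orientation-reversing map, so it is orientation-reversing, and the gluing is genuinely a connect-sum with $S^2 \times S^1$. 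The total space is therefore $\Sigma(Y,L) \# S^2 \times S^1$, and the preimage of $\kappa_L$ is $\wt{L} \setminus \mathrm{int}(\wt{A}_1 \cup \wt{A}_2)$ with endpoints identified across the gluing sphere, which is by definition the knotification of $\wt{L}$. Iterating $\ell - 1$ times yields (1) and (2).

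For part (3), I would use the vector field description of pullback $\SpinC$ structures from Definition~\ref{def:pullback-spinc}. Choose a representative $v$ of $\spinc$ on $Y$ in the standard form near $L$, extend to a representative $v'$ of $\spinc \# \spinct$ on $Y \#_{\ell-1} S^2 \times S^1$ that takes the standard form near $\kappa_L$ (picking any representative of $\spinct$ on the new summands), and lift via $\pi'$ to a vector field $\wt{v}'$ on the cover. By construction, $\wt{v}'$ restricts on the $\Sigma(Y, L)$-part to the lift of $v$, which represents $\pi^*\spinc$. On each new $S^2 \times S^1$ summand, $\wt{v}'$ represents some $\SpinC$ structure whose first Chern class is $\pi'^* c_1(\spinct) = 0$ by Lemma~\ref{lem:pullback-spinc-behaves}\eqref{pullback:chern}; since the torsion $\SpinC$ structure is the unique one on $\#_{\ell-1}S^2 \times S^1$ with vanishing $c_1$, this summand carries exactly $\spinct$. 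Hence $(\pi')^*(\spinc \# \spinct) = (\pi^*\spinc) \# \spinct$.

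The main technical obstacle I anticipate is the careful bookkeeping of orientations at the gluing step, to confirm that the lifted gluing produces an untwisted $S^2 \times S^1$ summand rather than the twisted $S^2 \widetilde{\times} S^1$. The orientation-preserving nature of the covering involution on each $\partial \wt{B}_i$, combined with the local-diffeomorphism argument away from branch points, is what makes this work out, but verifying this carefully across all $\ell - 1$ gluings (and in the inductive step, where some components may already have been partially joined) requires a bit of care.
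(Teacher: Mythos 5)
Your proposal is correct and follows essentially the same route as the paper: the topological identification rests on the observation that the branched double cover of a ball over a trivial arc is again a ball with trivial double point arc, and the $\SpinC$ identification rests on the fact that the two structures agree away from the connect-sum region while $c_1$ evaluates to zero on the gluing sphere by naturality of $c_1$ under pullback (the paper's Lemma~\ref{lem:pullback-spinc-behaves}\eqref{pullback:chern}, applied to the knot $\kappa_L$). You supply more detail on the equivariant cut-and-paste and orientation bookkeeping than the paper does (the only tiny slip being that the equivariant lift of the gluing map is not unique --- there are two, differing by the deck transformation --- but both induce the same identification of the double point set), and your vector-field phrasing of the $\SpinC$ step is equivalent to the paper's ``determined by restriction to $Y\setminus B_3$ plus $\langle c_1,[S^2]\rangle$'' argument.
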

\begin{proof}
  Recall that the branched double cover of a 3-ball over a trivial arc is
  again a 3-ball and the double point set is a trivial arc.  So, if
  $B_1$ and $B_2$ are small balls around points on two components of
  $L$ then $\pi^{-1}(B_1)$ and $\pi^{-1}(B_2)$ are small balls around
  points on two components of $\wt{L}$, and knotifying $L$ using $B_1$
  and $B_2$ corresponds to knotifying $\wt{L}$ using $\pi^{-1}(B_1)$
  and $\pi^{-1}(B_2)$.

  It remains to identify the $\SpinC$ structures.  For notational
  simplicity, we consider the case of a 2-component link.
  Let $B_3\subset Y$ be the union of $B_1$, $B_2$, and an
  arc connecting them. A
  $\SpinC$-structure $\spinc'$ on $Y\#(S^2\times S^1)$ is determined
  by its restriction to $Y\setminus B_3$ 
  and the evaluation of $c_1(\spinc')$ on $S^2\times\{\pt\}=\bdy B_1$.
  The same remarks hold for $\Sigma(Y,L)\#(S^2\times
  S^1)$. Now, $(\pi^*\spinc)\#\spinct$ and
  $(\pi')^*(\spinc\#\spinct)$ agree on $Y\setminus B_3$ and $\bigl\langle
  c_1\bigl((\pi^*\spinc)\#\spinct\bigr),[S^2]\bigr\rangle=0$. Since
  $(\pi')^*c_1(\spinc\#\spinct)=c_1((\pi')^*(\spinc\#\spinct))$, we have
  $\bigl\langle
  c_1\bigl((\pi')^*(\spinc\#\spinct)\bigr),[S^2]\bigr\rangle=0$ also.  It follows that the
  $\SpinC$-structures $(\pi^*\spinc)\#\spinct$ and $(\pi')^*(\spinc\#\spinct)$ agree.
\end{proof}

\begin{proof}[Proof of Theorem~\ref{thm:main}]
  This is immediate from Propositions~\ref{prop:spinc}
  and~\ref{prop:knotification} and the K\"unneth theorem for $\HFa$ of
  connected sums.
\end{proof}

\begin{remark}   
  The spectral sequence from Theorem~\ref{thm:main} is an invariant of
  $(Y,K)$ in the following sense. Given other choices in its
  construction (Heegaard diagrams, almost complex structures, and so
  on) there is an isomorphism between each page of the resulting
  spectral sequence. This follows from the fact that the spectral
  sequence is isomorphic to Seidel-Smith's spectral
  sequence for equivariant Floer cohomology~\cite[Section
  3.2]{SeidelSmith10:localization} (and hence to the spectral sequence
  one obtains by applying the techniques in~\cite{HLS:HEquivariant} to
  an equivariant Heegaard diagram for the branched double cover) and
  the proof of the analogous result for $\HFKa$~\cite[Corollary 1.10]{HLS:HEquivariant}. On the other
  hand, it is not clear that the isomorphism between the
  $E^\infty$-page of the spectral sequence and
  $\HFa(Y)\otimes\FF_2[[\theta,\theta^{-1}]$ is independent of choices.
\end{remark}

\begin{remark}\label{rem:ordinary-covs}
  Theorem~\ref{thm:main} allows one to recover a result about ordinary
  double covers, by taking $L$ to be the unknot and choosing an
  interesting Seifert surface. Specifically, a double cover
  $\wt{Y}\to Y$ is induced by a $\ZZ$ cover if the corresponding
  element in $H^1(Y;\ZZ/2\ZZ)$ is the image of an element of
  $H^1(Y;\ZZ)$. In that case, the double cover is obtained by cutting
  $Y$ along a closed, orientable surface $F$ and gluing two copies of
  the result together. Let $F'$ be the complement of a small disk
  in $F$, and $U=\bdy F'$. It is not hard to see that the double cover branched along $U$
  with respect to the Seifert surface $F'$ is
  $\wt{Y}\# (S^2\times S^1)$. So, Theorem~\ref{thm:main} gives a
  spectral sequence relating
  $\HFa(\wt{Y}\# (S^2\times S^1))\cong \HFa(\wt{Y})\otimes H_*(S^1)$
  and $\HFa(Y)$. Such a spectral sequence was obtained by different
  techniques by Lipshitz-Treumann~\cite[Theorem 3]{LT:hoch-loc} (for torsion $\SpinC$-structures); this
  construction gives another explanation of the appearance of the
  $H_*(S^1)$ factor. This spectral sequence was also proved by
  Large~\cite[Theorem 1.4]{Large}, using his localization theorem; the
  argument we have just given essentially reduces to his.  (This
  remark was suggested to us by the referee.)
\end{remark}

\section{Applications}\label{sec:applications}

\begin{proof}[Proof of Corollary~\ref{cor:non-L}]
First, recall that, by Poincar\'e duality, a non-zero degree map $f\co N_1 \to N_2$ between closed, connected, oriented 3-manifolds induces an injection on cohomology with rational coefficients.  So, it follows from Lemma~\ref{lem:pullback-spinc-behaves} that if $\pi\co \Sigma(Y,L) \to Y$ is a branched double cover, then $\spinc \in \SpinC(Y)$ is torsion if and only if $\pi^*\spinc$ is.  Also, of course, $b_1(N_2) \leq b_1(N_1)$. 
 
Suppose that $b_1(\Sigma(Y,L)) = 0$, so $b_1(Y) = 0$ as well.  If $\HFred(\Sigma(Y,L)) = 0$, then Theorem~\ref{thm:main} implies that
\[
1 = \dim \HFa(\Sigma(Y,L), \pi^*\spinc) \geq \dim \HFa(Y,\spinc) \geq 1
\]
for all $\spinc \in \SpinC(Y)$, so $\HFred(Y) = 0$.  Hence, if $\Sigma(Y,L)$ is an L-space, so is $Y$.
  
Next, suppose that $b_1(\Sigma(Y,L)) = 1$.  If $N$ is a 3-manifold with $b_1(N) = 1$, then $\HFred(N) = 0$ if and only if $\HFa(N,\mathfrak{t}) = 0$ for non-torsion $\mathfrak{t}$ and $\dim \HFa(N,\mathfrak{t}) = 2$ for all torsion $\mathfrak{t}$.  (Recall that 2 is the lower bound for $\dim \HFa(N,\mathfrak{t})$ for torsion $\mathfrak{t}$, regardless of whether $\HFred$ is non-trivial.)  We now consider two cases: $b_1(Y) = 0$ or $b_1(Y) = 1$.  First, assume $b_1(Y) = 0$.  By Theorem~\ref{thm:main}, we see that $\dim \HFa(Y,\spinc) \leq 2$ for all $\spinc \in \SpinC(Y)$.  Since $\chi(\HFa(Y,\spinc)) = 1$, we must in fact have $\dim \HFa(Y,\spinc) = 1$ for all $\spinc$.  This is equivalent to $\HFred(Y) = 0$.  

Finally, assume $b_1(Y) = b_1(\Sigma(Y,L)) = 1$.  As in the previous case, Theorem~\ref{thm:main} guarantees 
\[
\dim \HFa(Y,\spinc) \leq \begin{cases} 2 \text{ if $\pi^*\spinc$ is torsion}\\ 0 \text{ if $\pi^*\spinc$ is non-torsion}.\end{cases}
\]
Since $\spinc$ is torsion if and only if $\pi^*\spinc$ is torsion, we have the desired constraints on $\HFa(Y)$ to guarantee that $\HFred(Y) = 0$.  
\end{proof}

\begin{proof}[Proof of Corollary~\ref{cor:involution}]
In the Seidel-Smith spectral sequence (see Section~\ref{sec:Large}),
the $E^1$ page is $\HFa(\Sigma(Y,K), \pi^* \spinc) \otimes
\FF_2[[\theta,\theta^{-1}]$, and the $d_1$ differential is given by
$(1+\tau_*) \theta$.  If $\tau_*$ was not the identity, the $d_1$
differential would not be identically 0, and we would deduce that
$\dim \HFa(Y, \spinc)$ is strictly less than $\dim \HFa(\Sigma(Y,K), \pi^* \spinc)$,
contradicting Theorem~\ref{thm:main}.   
\end{proof}

\begin{proof}[Proof of Proposition~\ref{prop:tangle-rep}]
Since $K$ has determinant 1, $\Sigma(Y,K)$ is a homology
sphere.  As $K$ is obtained by a rational tangle replacement,
$\Sigma(Y,K)$ is obtained by surgery on a knot $J$ in $\Sigma(Y,U) = Y
\# Y$.  Note that the surgery coefficient must be $1/n$ for some $n
\in \ZZ$ to produce a homology sphere.  Since $\Sigma(Y,K)$ is an
L-space, $Y$ is an L-space by Corollary~\ref{cor:non-L}, and so $Y \#
Y$ is an L-space.  In what follows, recall that if $Z$ is a homology sphere L-space and
a surgery $Z_\alpha(P)$ is an L-space then $|\alpha| \geq 2g(P) - 1$
(cf.~\cite[Proposition 9.6]{OS11:RatSurg}).

First, assume that $|n| \geq 2$, so by the previous remark $g(J) = 0$, i.e., $J$ is
unknotted in $Y \# Y$.  Therefore, $\Sigma(Y,K)$ is a homology sphere obtained by surgery along an unknot in $Y \# Y$, so $\Sigma(Y,K)$ is $Y \# Y$ as well.  By a result of Kim-Tollefson~\cite[Corollary 1]{KimTollefson80:pl}, because $Y$ is prime, the covering involution on $Y \# Y$ is either a connected sum of involutions on $Y$ or comes from taking the branched double cover of an unknot in an embedded $B^3$ in $Y$.  We must rule out the former.  In order for a connected sum of involutions on $Y$ to have a quotient to $Y$, we must be able to write $Y = \Sigma(Y,K')$ and $Y = \Sigma(S^3,K'')$; again, we are using the irreducibility of $Y$. If $Y = S^3$, then $K' = K'' = U$ and so $K$ is unknotted.  If $Y \neq S^3$, then $Y$ cannot admit a self-map of degree 2.  Indeed, if $Y$ is a prime L-space other than $S^3$, then $Y$ is the Poincar\'e homology sphere or is hyperbolic~\cite{Eftekhary-bordered, HRW}.  The case of the Poincar\'e homology sphere is handled by Boileau-Otal~\cite[Proposition 3.1]{BoileauOtal} and the hyperbolic case follows from supermultiplicativity of the Gromov norm, which is positive for hyperbolic manifolds, under non-zero degree maps. Thus, in this case, $K$ is unknotted.

Next, assume that $n = \pm 1$.  In this case, there are two options.  The first is that $J$ is unknotted, and by the previous argument, so is $K$.  The other is that $g(J) = 1$.  While a knot in $S^3$ with a non-trivial L-space surgery is fibered, a knot $P$ in a homology sphere L-space $Z$ with a non-trivial L-space surgery has the property that $P$ is fibered in some (not necessarily prime or proper) connected-summand of $Z$.  (The statement for knots in $S^3$ is due to Ghiggini \cite{Ghiggini08:FiberedGenusOne}.  The statement for knots in arbitrary homology spheres with irreducible exteriors follows from Ni's work \cite[Theorem 1.1 and Proof of Corollary 1.3]{Ni07:FiberedKnot}.)  Therefore, in our case, $J$ is a genus one fibered knot in a summand $Q$ of $Y \# Y$, which is necessarily a homology sphere L-space.  Of course, viewed as a knot in $Q$, $1/n$-surgery on $J$ is again an L-space homology sphere, since it is a summand of $\Sigma(Y,K)$.  By Baldwin's work~\cite{Baldwin:genusone}, the only homology sphere L-space, genus one fibered L-space knot pairs are $(S^3, \pm T_{2,3})$ and $\mp (\Sigma(2,3,5), F_5)$, where $F_5$ denotes the singular fiber of order 5, i.e. the core of $+1$-surgery on $T_{2,3}$.  (Here, the signs are chosen based on the sign of $n$.)  Note that in the former case, $1/n$-surgery produces $\pm \Sigma(2,3,5)$, while in the latter case, $1/n$-surgery produces $S^3$.   

In the first case, $J$ is a copy of $\pm T_{2,3}$ contained in an embedded 3-ball in $Y \# Y$, and so $\Sigma(Y,K) = Y \# Y \# \pm \Sigma(2,3,5)$. Since $Y$ is prime, it follows from Kim-Tollefson~\cite[Corollary 1]{KimTollefson80:pl} that $K$ must be a knot in an embedded 3-ball in $Y$ with branched double cover $\pm \Sigma(2,3,5)$.  (Here we are using that $\Sigma(2,3,5)$ is not a branched or unbranched double cover of itself, which follows from \cite[Proposition 3.1]{BoileauOtal}.)  By a result of Watson~\cite[Theorem 6.2]{WatsonSurgery} $K$ is a copy of $\mp T_{3,5}$ in an embedded $B^3$ in $Y$.  In the second case, we see that the Poincar\'e homology sphere is a summand of $Y \# Y$ and hence of $Y$.  Because we assumed $Y$ is irreducible, $Y$ is the Poincar\'e homology sphere, and $\Sigma(Y,K)$ is one copy of the Poincar\'e homology sphere.  Since there is no knot in the Poincar\'e homology sphere whose branched double cover is again the Poincar\'e homology sphere, this last case does not arise.      
\end{proof}

\begin{remark}
If $Y$ is not prime, similar characterizations can likely be obtained, but it requires a more tedious analysis of the possible involutions on the relevant 3-manifolds.\end{remark}

\begin{remark}
Assuming the Heegaard Floer Poincar\'e conjecture, this proposition can be proved without requiring the results from this paper, since the involutions on $S^3$ and connected sums of the Poincar\'e homology sphere are well understood.  
\end{remark}

\subsection{Analogue in sutured Floer homology}\label{sec:sutured}
In this section we prove an analogue of Theorem~\ref{thm:main} for sutured Floer homology.  
Let $(M,\gamma)$ be a balanced sutured manifold and $L \subset M$ a
nullhomologous link in the interior of $M$.  Then, there is a natural sutured structure
$\wt{\gamma}$ on $\partial \Sigma(M,L)$: the sutures are the preimage
of the sutures of $M$ under the covering map $\pi\co \bdy
\Sigma(M,L)\to\bdy M$, and the positive / negative regions
$\wt{R}_\pm$ are the preimages of the positive / negative regions in
$\bdy M$.  Since
$\chi(\wt{R}_+)=2\chi(R_+)=2\chi(R_-)=\chi(\wt{R}_-)$, $(\Sigma(M,L),\wt{\gamma})$ is also balanced.

\begin{proof}[Proof of Proposition~\ref{prop:sutured-inequality}]
For simplicity, we assume that $K$ is a knot.  The extension from knots to links is analogous to the closed case.  

By a \emph{doubly-pointed sutured Heegaard diagram} for $(M,\gamma,K)$
we mean a sutured
Heegaard diagram $(\Sigma,\alphas,\betas)$ for $(M,\gamma)$ together with a pair of
points $z,w\in \Sigma\setminus(\alphas\cup\betas)$ so that
$(\Sigma\setminus\nbd(\{z,w\}),\alphas,\betas)$ is a sutured Heegaard
diagram for $M\setminus\nbd(K)$, with two meridional sutures around $K$.
Call
$(\Sigma,\alphas,\betas,z,w)$ \emph{admissible} if the sutured
Heegaard diagram $(\Sigma\setminus\nbd(z),\alphas,\betas)$ is
admissible. 

A simple Morse-theory argument shows that every knot in the interior
of $M$ is represented by some doubly-pointed Heegaard diagram
(compare~\cite[Proposition 2.3]{Juhasz06:Sutured}). Further, any
doubly-pointed Heegaard diagram can be made weakly admissible by an
isotopy of the $\alpha$-circles (cf.~\cite[Proposition
3.15]{Juhasz06:Sutured}). 

So, choose an admissible doubly-pointed sutured Heegaard diagram
$\HD=(\Sigma, \alphas, \betas, z, w)$ for $K \subset (M,\gamma)$. A
Seifert surface for $K$ transverse to $\Sigma$ induces a branched
double cover $\wt{\Sigma}$ of $\Sigma$, branched over $\{z,w\}$. If we
let $\wt{\alphas}$, $\wt{\betas}$, $\wt{z}$, and $\wt{w}$ be the
preimages of $\alphas$, $\betas$, $z$, and $w$ under the branched
covering map then
$(\wt{\Sigma}, \wt{\alphas}, \wt{\betas}, \wt{z}, \wt{w})$ is a
doubly-pointed sutured Heegaard diagram representing
$\wt{K}=\pi^{-1}(K)$ in $(\Sigma(M,K), \wt{\gamma})$.
(This is clear, for example, by considering a Morse-theoretic
interpretation of sutured Heegaard diagrams.)

Let $d$ be the number of $\alpha$-circles in the Heegaard diagram
$\HD$. 
We apply Large's theorem to prove Lemma~\ref{lem:sutured-Large} below,
which yields a spectral sequence with $E^1$ page the Floer homology
$\HF(\TT_{\wt{\alpha}}, \TT_{\wt{\beta}}) \otimes \FF_2[[\theta, \theta^{-1}]$ computed in $\Sym^{2d}(\wt{\Sigma} \setminus
\{\wt{z}\})$ and $E^\infty$ page the Floer homology $\HF(\TT_\alpha,
\TT_\beta) \otimes \FF_2[[\theta, \theta^{-1}]$ computed in
$\Sym^{d}(\Sigma \setminus \{z\})$.

Note that these Lagrangian Floer homologies are not describing the sutured Floer homologies in the proposition.  Given a balanced sutured manifold $(Z,\eta)$, define $(Z^\circ, \eta^\circ)$ to be the balanced sutured manifold obtained by removing an embedded 3-ball from $M$ and adding an equatorial suture on the additional 2-sphere component in the boundary.  So,
\begin{align*}
HF(\TT_{\wt{\alpha}}, \TT_{\wt{\beta}}) &\cong \SFH(\Sigma(M,K)^\circ, \wt{\gamma}^\circ)\\
HF(\TT_\alpha, \TT_\beta) &\cong \SFH(M^\circ,  \gamma^\circ).
\end{align*}
The K\"unneth theorem for sutured Floer homology~\cite[Proposition 9.15]{Juhasz06:Sutured} implies that
\begin{equation}\label{eq:sutured-stabilization}
\SFH(Z^\circ, \eta^\circ) \cong \SFH(Z,\eta) \otimes H_*(S^1),
\end{equation}
which gives the desired result.
\end{proof}

\begin{lemma}\label{lem:sutured-Large} 
Consider an admissible doubly-pointed sutured Heegaard diagram
$(\Sigma, \alphas, \betas, z, w)$ for $K \subset (M,\gamma)$, and
let $(\wt{\Sigma}, \wt{\alphas}, \wt{\betas}, \wt{z}, \wt{w})$ be the
associated diagram for $(\Sigma(M,K), \gamma)$.  Then, there is a
spectral sequence with $E^1$-page the Floer homology
$\HF(\TT_{\wt{\alpha}}, \TT_{\wt{\beta}}) \otimes
\FF_2[[\theta,\theta^{-1}]$ inside $\Sym^{2d}(\wt{\Sigma} \setminus \{\wt{z}\})$ and
$E^\infty$-page the Floer homology $\HF(\TT_\alpha, \TT_\beta)\otimes
\FF_2[[\theta, \theta^{-1}]$ inside $\Sym^{d}({\Sigma} \setminus \{ {z}\})$.  
\end{lemma}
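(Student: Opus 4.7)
The plan is to apply Large's localization theorem (Theorem~\ref{thm:Large}) to the triple $(\Sym^{2d}(\tilde\Sigma\setminus\{\tilde z\}), \TT_{\tilde\alpha}, \TT_{\tilde\beta})$ equipped with the symplectic involution $\tau$ induced by the deck transformation of the branched cover, in exact parallel with the proof of Theorem~\ref{thm:main-knots}. The sutured analog of Lemma~\ref{lem:fixed-set} identifies the $\tau$-fixed set symplectomorphically with $(\Sym^d(\Sigma\setminus\{z\}), \TT_\alpha, \TT_\beta)$, so once hypotheses~\ref{item:symp-hyp} and~\ref{item:tau-hyp} together with the stable tangent-normal isomorphism are in place, Theorem~\ref{thm:Large} will immediately produce the desired spectral sequence.

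To check admissibility and the symplectic hypotheses I would follow Proposition~\ref{prop:symplectic-hypotheses} essentially verbatim. Weak admissibility transfers to $\tilde\HD$ because nullhomology of $K$ forces any periodic domain in $\tilde\HD$ with multiplicity zero at $\tilde z$ to also have multiplicity zero at $\tilde w$; such a domain descends to a periodic domain in $\HD$ of zero multiplicity at both $z$ and $w$, whose signed area vanishes by admissibility of $\HD$. Perutz's construction of an equivariant plurisubharmonic exhausting function on the symmetric product of a punctured surface, as adapted by Hendricks~\cite[Section 4]{Hendricks12:dcov-localization}, makes $\Sym^{2d}(\tilde\Sigma\setminus\{\tilde z\})$ an exact symplectic manifold convex at infinity; the sutured boundary of $\Sigma$ does not interfere because all relevant constructions take place in the interior. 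Exactness of the Lagrangian tori is arranged exactly as in the closed case by subtracting from the Liouville primitive a closed $1$-form representing a suitable cohomology class pulled back from $\tilde\Sigma$.

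For the stable tangent-normal isomorphism I would rerun the argument of Proposition~\ref{prop:trivialization}. Large's~\cite[Proposition 10.2]{Large} produces a stable tangent-normal isomorphism $\Phi_1$ over $\Sym^{2d}(\tilde\Sigma\setminus\{\tilde z,\tilde w\})^{\fix}$; Lemma~\ref{lemma:bigiso} extends a bare stable isomorphism $\Phi_2$ (without Lagrangian control) across the $w$-divisor; and the discrepancy $\Phi_2^{-1}\circ\Phi_1$ is corrected by an auxiliary isomorphism $\Phi_3$ whose existence is controlled via $H^{\mathrm{odd}}$ by Corollary~\ref{corollary:integral-iso}. All three ingredients apply unchanged in the sutured setting: the sutured Heegaard surface with one puncture is still homotopy equivalent to a bouquet of circles, so Lemma~\ref{lem:sym-is-torus} and Corollary~\ref{corollary:integral-iso} go through, and the crucial Lemma~\ref{lemma:works-for-cohomology} only needs the Poincar\'e--Lefschetz dual to a Seifert surface for $K$ together with the fact that each $\alpha_i$ and $\beta_j$ bounds a compressing disk in the relevant compression body, both of which are available for a nullhomologous knot in a balanced sutured manifold.

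The main obstacle I anticipate is purely bookkeeping: confirming in detail that the symplectic and analytic setup on $\Sym^{2d}$ of a surface which has both punctures and honest boundary inherits the convexity-at-infinity and Lagrangian-conicality needed to invoke~\ref{item:symp-hyp} directly, rather than requiring ad hoc local modifications near $\partial\Sigma$. Granted that, the rest of the argument is formal, and Theorem~\ref{thm:Large} applied to the data above gives an ungraded spectral sequence with $E^1$-page $\HF(\TT_{\tilde\alpha},\TT_{\tilde\beta})\otimes\FF_2[\theta,\theta^{-1}]$ inside $\Sym^{2d}(\tilde\Sigma\setminus\{\tilde z\})$ converging to $\HF(\TT_\alpha,\TT_\beta)\otimes\FF_2[\theta,\theta^{-1}]$ inside $\Sym^d(\Sigma\setminus\{z\})$, which is the statement of Lemma~\ref{lem:sutured-Large}.
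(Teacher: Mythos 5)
Your proposal is correct and follows essentially the same route as the paper: apply Theorem~\ref{thm:Large} to $\bigl(\Sym^{2d}(\wt{\Sigma}\setminus\{\wt{z}\}),\TT_{\wt{\alpha}},\TT_{\wt{\beta}}\bigr)$ with the covering involution, verify the symplectic hypotheses as in Proposition~\ref{prop:symplectic-hypotheses}, and rerun the $\Phi_1,\Phi_2,\Phi_3$ argument of Proposition~\ref{prop:trivialization}, noting that Lemma~\ref{lem:sym-is-torus}, Corollary~\ref{corollary:integral-iso}, and Lemma~\ref{lemma:works-for-cohomology} all persist because the punctured sutured Heegaard surface still retracts onto a bouquet of circles. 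The paper likewise treats the symplectic setup near $\partial\Sigma$ as routine (it is left to the reader), so the one issue you flag is not a gap in your argument relative to the published proof.
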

\begin{proof}
The proof that the symplectic hypotheses of Theorem~\ref{thm:Large}
are satisfied is similar to the proof of
Proposition~\ref{prop:symplectic-hypotheses}, and is left to the
reader. It remains to show that there is a tangent-normal isomorphism 
\[
(T \Sym^d(\Sigma \setminus \{z\}) , T \TT_{\alpha}, T\TT_{\beta}) \cong (N \Sym^d(\Sigma \setminus \{z\}), N\TT_{\alpha}, N \TT_{\beta}).  
\] 
The argument proceeds in two steps as in the closed case.  First,
Large's argument~\cite[Proof of Propositions 10.1 and 10.2]{Large} establishes an isomorphism
\[
\Phi_1\co (T \Sym^d(\Sigma \setminus \{w,z\}) , T \TT_{\alpha}, T\TT_{\beta}) \cong (N \Sym^d(\Sigma \setminus \{w,z\}), N\TT_{\alpha}, N \TT_{\beta}).
\]
Since $z$ and $w$ lie in the same component of $\Sigma$, as a special
case we again get an isomorphism
\[
  \Phi_2\co  T \Sym^d(\Sigma \setminus \{z\})\cong N \Sym^d(\Sigma \setminus \{z\})
\]
which may not respect the tangent and normal bundles to
the tori (cf. Lemma~\ref{lemma:bigiso}).

We show that the first of these isomorphisms can be modified to extend
this over $\{w\}\times\Sym^{d-1}(\Sigma\setminus\{z\})$.  As in
Section~\ref{sec:normalIso}, the space $\Sigma \setminus \{z\}$
deformation retracts onto a wedge of circles, so by
Lemma~\ref{lem:sym-is-torus} any $g$-fold symmetric product
$\Sym^d(\Sigma \setminus \{z\})$ has the homotopy type of a skeleton
of a torus. It follows by the same argument as
Corollary~\ref{corollary:integral-iso} that the Chern character is an
integral isomorphism
$\chern\co \Iso(E,E)\to H^{\mathrm{odd}}(\Sym^d(\Sigma\setminus
\{z\}))$ for any complex vector bundle $E$ over
$\Sym^d(\Sigma\setminus \{z\})$, and similarly for
$\Sym^d(\Sigma\setminus
\{z,w\})$. Lemma~\ref{lemma:works-for-cohomology} still holds in this
context, with the same proof. So, the proof of
Proposition~\ref{prop:trivialization} applies to show that $\Phi_2 \circ \Phi_4$ gives a 
tangent-normal isomorphism, as desired.
\end{proof}

\begin{corollary}
Let $(M,\gamma)$ be a balanced sutured manifold and $L \subset M$ a nullhomologous link.  If $(M,\gamma)$ is a taut sutured manifold and $\Sigma(M,L)$ is irreducible, then $(\Sigma(M,L), \wt{\gamma})$ is taut as well.
\end{corollary}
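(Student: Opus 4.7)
The plan is to reduce tautness to nonvanishing of sutured Floer homology via Juh\'asz's detection theorem, and then invoke the rank inequality of Proposition~\ref{prop:sutured-inequality}.

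Concretely, I would proceed in three steps. First, apply Juh\'asz's theorem in the forward direction to $(M,\gamma)$: since $(M,\gamma)$ is taut, $\SFH(M,\gamma) \neq 0$. Second, by Proposition~\ref{prop:sutured-inequality},
\[
  \dim \SFH(\Sigma(M,L), \wt{\gamma}) \geq \dim \SFH(M, \gamma) > 0,
\]
so $\SFH(\Sigma(M,L), \wt{\gamma}) \neq 0$ as well. Third, apply the converse direction of Juh\'asz's theorem to $(\Sigma(M,L), \wt{\gamma})$: combined with the assumed irreducibility of $\Sigma(M,L)$, nonvanishing of $\SFH$ forces this sutured manifold to be taut.

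The main obstacle will be verifying the hypotheses of the converse direction of Juh\'asz's theorem. Besides the given irreducibility of $\Sigma(M,L)$, one must check that $\wt{R}(\wt{\gamma})$ has no sphere components. Since $L$ lies in the interior of $M$, the branched covering map $\pi$ is unbranched over $\partial M$, and hence $\wt{R}(\wt{\gamma}) \to R(\gamma)$ is an unbranched double cover. Tautness of $(M,\gamma)$ excludes sphere components of $R(\gamma)$. A hypothetical sphere component $\wt{S}$ of $\wt{R}(\wt{\gamma})$ would either map homeomorphically onto a sphere component of $R(\gamma)$ (impossible) or arise as a connected 2-fold cover of a component $S \subset R(\gamma)$ with $\chi(S) = 1$. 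Orientability of $R(\gamma)$ rules out $S = \mathbb{RP}^2$, leaving $S = D^2$, which admits no connected double cover since $\pi_1(D^2) = 0$. Hence $\wt{R}(\wt{\gamma})$ has no sphere components and Juh\'asz's theorem applies, completing the argument.
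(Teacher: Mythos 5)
Your proposal is correct and follows essentially the same route as the paper: the paper's proof is exactly the two-line combination of Juh\'asz's detection theorem (taut implies $\SFH\neq 0$; irreducible and non-taut implies $\SFH=0$) with Proposition~\ref{prop:sutured-inequality}. Your extra verification that $\wt{R}(\wt{\gamma})$ has no sphere components is harmless but already guaranteed by balancedness of $(\Sigma(M,L),\wt{\gamma})$, which the paper establishes just before the proof of Proposition~\ref{prop:sutured-inequality}: every component of $\wt{R}(\wt{\gamma})$ has nonempty boundary.
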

\begin{proof}
An irreducible balanced sutured manifold has non-vanishing sutured Floer homology if and only if it is taut \cite[Proposition 9.18]{Juhasz06:Sutured}, \cite[Theorem 1.4]{Juhasz08:SuturedDecomp}.  The theorem therefore follows from Proposition~\ref{prop:sutured-inequality}.
\end{proof}

\begin{corollary}
Let $Y$ be a closed, connected, oriented 3-manifold, $L \subset Y$ a link, and
$Q \subset Y\setminus L$ a link which is nullhomologous in $Y\setminus
L$.  Let $\wt{L}$ be the preimage of $L$ inside $\Sigma(Y,Q)$. Then, there is a rank inequality 
\[
\dim \HFLa(\Sigma(Y,Q), \widetilde{L}) \geq \dim \HFLa(Y,L).  
\]
\end{corollary}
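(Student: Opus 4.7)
The plan is to realize both $\HFLa(Y,L)$ and $\HFLa(\Sigma(Y,Q),\widetilde{L})$ as sutured Floer homologies of sutured link complements, and then apply Proposition~\ref{prop:sutured-inequality} to deduce the desired rank inequality. Specifically, I would take $(M,\gamma)$ to be the balanced sutured manifold obtained from $Y$ by removing an open tubular neighborhood of $L$ and placing two oppositely-oriented meridional sutures on each boundary torus. By Juh\'asz's identification of sutured Floer homology with link Floer homology in this setting, $\SFH(M,\gamma)\cong \HFLa(Y,L)$.

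Next I would check that $Q\subset M$ is nullhomologous inside $M$; this is immediate from the hypothesis $[Q]=0\in H_1(Y\setminus L)$ together with the deformation retraction $M\simeq Y\setminus L$. The main topological step is to verify that the sutured branched double cover $(\Sigma(M,Q),\widetilde{\gamma})$ can be identified with the sutured link complement of $\widetilde{L}\subset\Sigma(Y,Q)$ with meridional sutures. On the level of underlying 3-manifolds this is a diagram chase: both $\Sigma(M,Q)$ and $\Sigma(Y,Q)\setminus\nbd(\widetilde{L})$ are obtained from the unbranched double cover of $Y\setminus\bigl(\nbd(L)\cup\nbd(Q)\bigr)$ determined by the character $\pi_1\bigl(Y\setminus(L\cup Q)\bigr)\to\ZZ/2$ sending meridians of $Q$ to $1$, by filling in the preimage of $\nbd(Q)$. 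Because $L$ and $Q$ are disjoint, the covering map is unramified over $\partial\nbd(L)$, so the meridional sutures on $\partial\nbd(L)$ lift to meridional sutures on each component of $\pi^{-1}\bigl(\partial\nbd(L)\bigr)\subset\partial\nbd(\widetilde{L})$. This identifies $\widetilde{\gamma}$ with the standard meridional sutures on $\widetilde{L}$, and so by Juh\'asz's theorem again $\SFH(\Sigma(M,Q),\widetilde{\gamma})\cong\HFLa(\Sigma(Y,Q),\widetilde{L})$.

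With these identifications in hand, Proposition~\ref{prop:sutured-inequality} applied to the nullhomologous link $Q\subset(M,\gamma)$ gives
\[
\dim\SFH(\Sigma(M,Q),\widetilde{\gamma})\geq \dim\SFH(M,\gamma),
\]
which translates directly into the desired inequality $\dim\HFLa(\Sigma(Y,Q),\widetilde{L})\geq \dim\HFLa(Y,L)$. I expect no significant obstacle: the whole proof reduces to the bookkeeping of sutures under the branched covering map, which is straightforward once one notes that the branch locus $Q$ is disjoint from $\partial\nbd(L)$ so that the lifting of sutures is a pullback along an honest covering map.
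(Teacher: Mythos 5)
Your argument is correct and is essentially the paper's own proof: take $(M,\gamma)$ to be the exterior of $L$ with meridional sutures, identify $\SFH(M,\gamma)$ with $\HFLa(Y,L)$ and $\SFH(\Sigma(M,Q),\widetilde{\gamma})$ with $\HFLa(\Sigma(Y,Q),\widetilde{L})$, and apply Proposition~\ref{prop:sutured-inequality} to the nullhomologous link $Q\subset M$. Your extra care in checking that the induced sutures on the branched cover are again meridional (since the branch locus is disjoint from $\partial\nbd(L)$) is a detail the paper leaves implicit, but it is the same route.
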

Here, if $L$ is not nullhomologous, by $\HFLa(Y,L)$ we mean the
sutured Floer homology of $Y\setminus\nbd(L)$ with meridional sutures. For a more concrete case, if $Y = S^3$ and $Q$ is an unknot, then this gives a rank inequality for the knot Floer homology of certain 2-periodic links, which was proved by the first author.  In this case, the condition that $Q$ be nullhomologous in the exterior of $L$ is equivalent to the quotient link having linking number 0 with the axis of symmetry.
\begin{proof}
Let $M$ denote the exterior of $L$ and $\gamma$ consist of a pair of
meridional sutures for each toral boundary component, so
$\SFH(M,\gamma) \cong \widehat{HFL}(Y,L)$.  Similarly,
$\SFH(\Sigma(M,Q), \wt{\gamma}) \cong \widehat{HFL}(\Sigma(Y,Q),
\wt{L})$.  Thus, the result follows from Proposition~\ref{prop:sutured-inequality}, since $Q$ is nullhomologous in $M$ by assumption.  
\end{proof}

\begin{remark}
Perhaps one could use Proposition~\ref{prop:sutured-inequality} to recover classical theorems in equivariant 3-manifold topology for involutions (with suitable constraints on the branch set), such as the equivariant Dehn's lemma~\cite{MeeksYau81:loop,Edmonds86:Dehn}.
\end{remark}

\bibliographystyle{hamsalpha}
\bibliography{heegaardfloer}
\end{document}
